\definecolor{my-blue}{rgb}{0.0,0.0,0.6}
\definecolor{my-red}{rgb}{0.5,0.0,0.0}
\definecolor{my-green}{rgb}{0.0,0.5,0.0}
\newtheorem{theorem}{\sc Theorem}[section]
\newtheorem{lemma}[theorem]{\sc Lemma}
\newtheorem{proposition}[theorem]{\sc Proposition}
\numberwithin{equation}{section}
\theoremstyle{remark}
\newtheorem{remark}[theorem]{Remark}
\newcommand{\be}{\begin{equation}}
\newcommand{\ee}{\end{equation}}
\newcommand{\nn}{\nonumber}
\providecommand{\abs}[1]{\vert#1\vert}
\def\bE{\mathbb{E}}
\def\bN{\mathbb{N}}
\def\bP{\mathbb{P}}
\def\bR{\mathbb{R}}
\def\bZ{\mathbb{Z}}
\def\w{\omega}      
\def\om{\omega}
\def\e{\varepsilon}
\def\ind{\mathbf{1}}
\def\m1{\mathbf{1}}
\newcommand{\she}{\mathcal{Z}}
\newcommand{\wh}{\mathscr{W}}
\newcommand{\cB}{\mathcal{B}}
\newcommand{\vf}{\varphi}
\newcommand{\free}{\mathbf{F}}
 \def\Vvv{{\rm\mathbb{V}ar}}
 \def\wt{\widetilde}  
\def\E{\bE}
\def\P{\bP} 
\def\digamf{\Psi_0} 
\def\trigamf{\Psi_1} 
\newcommand{\eqd}{\stackrel{d}{=}}
\def\eps{\varepsilon}
\def\para{\theta} 
  \def\bbb{b}      
 \newcommand{\bea}{\begin{eqnarray}}
\newcommand{\eea}{\end{eqnarray}}
\newcommand{\ben}{\begin{eqnarray*}}
\newcommand{\een}{\end{eqnarray*}}
 \def\tZ{\tilde Z}
\newcommand{\evmax}{\lambda^n_{\textup{max}}}
\def\betaa{\beta_0}  
\begin{document}

\title[Fluctuation exponents for directed polymers]{Fluctuation exponents for directed polymers in the intermediate 
disorder regime}

\author{Gregorio R. Moreno Flores}
\address{Gregorio R. Moreno Flores\\ Pontificia Universidad Cat\'olica de Chile \\
Departamento de Matem\'aticas \\ Vicu\~na Mackenna 4860 \\ Macul - Santiago \\ Chile.}
\email{grmoreno@mat.puc.cl}
\thanks{G. Moreno was partially supported by a Fondecyt grant 1130280.}
\author[T.~Sepp\"al\"ainen]{Timo Sepp\"al\"ainen}
\address{Timo Sepp\"al\"ainen\\ University of Wisconsin-Madison\\  Mathematics Department\\ Van Vleck Hall\\ 480 Lincoln Dr.\\   Madison WI 53706-1388\\ USA.}
\email{seppalai@math.wisc.edu}
\urladdr{http://www.math.wisc.edu/~seppalai}
\thanks{T.\ Sepp\"al\"ainen was partially supported by  National Science Foundation grants DMS-1003651 and DMS-1306777 and by the Wisconsin Alumni Research Foundation.} 
\author{Benedek Valk\'o}
\address{Benedek Valk\'o\\ University of Wisconsin-Madison\\  Mathematics Department\\ Van Vleck Hall\\ 480 Lincoln Dr.\\   Madison WI 53706-1388\\ USA.}
\email{valko@math.wisc.edu}
\thanks{B. Valk\'o was partially supported by  National Science Foundation CAREER award DMS-1053280.}

\begin{abstract}
We compute the fluctuation exponents for a solvable model of one-dimensional directed polymers in random environment in the intermediate regime. This regime corresponds to taking the inverse temperature to zero with the size of the system. The exponents satisfy the KPZ scaling relation and coincide with physical predictions.
In the critical case, we recover the fluctuation exponents of the Cole-Hopf solution of the KPZ equation in equilibrium and close to equilibrium.  
\end{abstract}

\maketitle
%
%
%
%
%
%
%
%
%
%
%
%
%
%
%
%

 \section{Main results}
 
\subsection{Introduction}

The {\sl directed polymer in a random environment} is a 
statistical physics model that    assigns  Boltzmann-Gibbs weights 
to   random walk
paths as a function of  the environment
encountered by the walk.  It was  originally introduced in  \cite{HH} as a model of an interface in two dimensions.  Here is the standard lattice formulation in $d+1$ dimensions ($d$ space dimensions, one time dimension).  

The environment is a collection of i.i.d.\ random weights $\{\om(i,x):\,
i\in\mathbb{N},\,x\in \mathbb{Z}^d\}$ with probability distribution $\P$. 
Let $P$ be the law of   simple symmetric random walk $(S_t)_{t\in\mathbb{Z}_+}$ on $\mathbb{Z}^d$ with $S_0=0$.    Denote   expectation under  $P$ and $\P$ by $E$ and $\E$,   respectively.   The {\sl quenched  partition function} of the directed polymer   
 in environment $\om$ and  at inverse temperature
$\beta>0$ is   
\begin{eqnarray}\label{pf-dpre}
 Z_{N,x}(\beta) = E\bigl[ e^{\beta \sum^N_{i=1}\om(i,S_i)},\, S_N=x\bigr],
\end{eqnarray} 
where $E[X,A]$ is the expectation of $X$ restricted to  the event $A$. 
 This is the {\sl point-to-point}  partition function because the endpoint $S_N$ of the walk is
 constrained to be $x$.  The version that allows $S_N$ to fluctuate freely is the
 {\sl point-to-line} partition function.  
 In the point-to-point setting the {\sl quenched polymer measure}   
   on paths ending at $x$ is 
\be \begin{aligned} 
 &Q^{\beta}_{N,x}\left(S_1=x_1,\dotsc,\, S_N=x_N\right) \\
&\qquad  =  \frac{1}{Z_{N,x}(\beta)}e^{\beta \sum^N_{i=1}\om(i,x_i)} P\left[ S_1=x_1,\dotsc, S_N=x_N\right] \cdot \ind\{x_N=x\} . \end{aligned} 
\label{Q1}\ee
These quenched quantities are functions of the environment $\w$ and thereby random.  
The averaged distribution of the path is $P^{\beta}_{N,x}(\cdot)=\E Q^{\beta}_{N,x}(\cdot)$. 
We refer the reader to  reviews  \cite{CSY, denH, KS}  for  a deeper discussion 
 of the subject. 

  We restrict the  discussion to the 1+1 dimensional case.  
Basic objects of study are the fluctuations of the free energy 
$\log Z_{N,Nx}(\beta)$ and the path $(S_t)_{0\le t\le N}$. 
On the crudest 
level the orders of magnitude of these  fluctuations are 
described by two exponents $\chi$ and $\zeta$:
\begin{itemize}
\item fluctuations of $\log Z_{N, Nx}(\beta)$ under $\P$ have order of magnitude
$N^\chi$
\item fluctuations of the path $S_t$ under $P_{N, Nx}^\beta$ have order of magnitude
$N^\zeta$
\end{itemize}
In the 1+1 dimensional case these exponents are expected to take
the values $\chi=1/3$ and $\zeta=2/3$  independently
of $\beta$, provided the i.i.d.\ weights $\om(i,x)$ satisfy a moment bound.    Furthermore,  there are specific predictions for the limit distributions 
of the scaled quantities:  for example, the GUE Tracy-Widom distribution  for 
$\log Z_{N, Nx}(\beta)$.  These properties are features  of the {\sl Kardar-Parisi-Zhang (KPZ)
universality class} to which these models are expected to belong.  
See \cite{Ivan, S-SI} for  recent surveys.   The KPZ regime should be contrasted
with the {\sl diffusive regime} where $\chi=0$, $\zeta=1/2$, and the
path satisfies a central limit theorem.  Diffusive behavior is known to happen
for $d\ge 3$ and small enough $\beta$ \cite{CY}. 

There are three exactly solvable 1+1 dimensional  models for which KPZ predictions 
have been partially proved:  
 \begin{enumerate}
\item[(a)] 
 the semidiscrete polymer in a Brownian
environment   \cite{OY}    
\item[(b)]  the log-gamma polymer \cite{S}   
\item[(c)]   the continuum directed random polymer,
in other words, the solution of the Kardar-Parisi-Zhang (KPZ) equation 
\cite{AKQ2, ACQ, KPZ}  
\end{enumerate} 
In  recent years a number of 
results have appeared, 
first for  exponents and then for distributional properties.   
This is not  a  place for 
a thorough  review, but let us cite some of the relevant papers:
\cite{ACQ, BC, BCF, BCR, BQS, cosz, O, S, SV}.   To do justice to history,  
we mention also  that KPZ results appeared earlier for zero-temperature polymers
(the $\beta\to\infty$ limit of \eqref{pf-dpre}--\eqref{Q1}, known
as last-passage percolation), beginning with the seminal papers
\cite{BDJ, J}. 

Getting closer to the topic of the  present paper, physics paper
 \cite{AKQ0} introduced the study of the {\sl intermediate disorder regime}
 in model \eqref{pf-dpre}--\eqref{Q1}. This means that $\beta$ 
 is scaled to zero as $N\to\infty$ by taking $\beta=\beta_0N^{-\alpha}$. 
 The window of interest is $0\le \alpha\le 1/4$.  At $\alpha=0$ one sees the
 KPZ behavior with exponents $\chi=1/3$ and $\zeta=2/3$.  At $\alpha=1/4$ one has the critical case where exponents are   diffusive  
 ($\chi=0$ and $\zeta=1/2$) but fluctuations are different  \cite{AKQ}. 
  When
$\alpha>1/4$ the disorder is   irrelevant and the polymer behaves like a
simple random walk   \cite{AKQ2}.  

Article  \cite{AKQ0} conjectured the exponents for the entire range:
\be   \chi(\alpha)=\tfrac13 (1-4\alpha)
\quad\text{and}\quad 
 \zeta(\alpha)=\tfrac23 (1-\alpha)\quad \text{for $0\le \alpha\le 1/4$. }  \label{exp1}\ee
  In this paper we derive these  intermediate disorder  exponents  
for the semidiscrete polymer in the Brownian environment  (introduced in \cite{OY}, hence  also called the O'Connell-Yor model). Along the way we offer some improvements to the 
earlier work \cite{SV} which treated the $\alpha=0$ case.    This model has 
two versions:  a {\sl stationary version} with  particular
 boundary conditions that render the process of $\log Z$ increments
 shift-invariant, and the 
{\sl point-to-point version}  without boundary conditions represented  by 
\eqref{pf-dpre}--\eqref{Q1}  above. In general we have  better results for the stationary version.  
  In case the reader is encountering   polymer
models with boundaries for the first time but can appreciate an analogy
 with the 
totally asymmetric simple exclusion process (TASEP), then 
the stationary polymer corresponds to stationary TASEP with Bernoulli
occupations, while the point-to-point version of the polymer is the analogue
of TASEP with  step initial condition.  

We list below the precise  contributions of our  paper: 
 \begin{enumerate}
\item[(i)]   For the free energy we derive the exponent 
  $\chi(\alpha)=\tfrac13 (1-4\alpha)$
for the entire range $0\le \alpha\le 1/4$ for the stationary version  and 
for $0\le \alpha<1/4$ for the point-to-point version.  
 For the fixed temperature case ($\alpha=0$)
the lower bound $\chi\ge 1/3$ for the point-to-point version  was not covered in \cite{SV}, but is
done here.   
\item[(ii)] We have  the path exponent $\zeta(\alpha)=\tfrac23 (1-\alpha)$
for the stationary version, and the upper bound 
$\zeta(\alpha)\le\tfrac23 (1-\alpha)$ for the point-to-point version. 
\item[(iii)]  Our results refine the prediction \eqref{exp1} in the following way. 
 We introduce a macroscopic time parameter $\tau>0$ and conclude 
 that  the fluctuations of $\log Z_{\tau N,\tau Nx}(\beta_0N^{-\alpha})$ are 
 of magnitude $\tau^{1/3}N^{\chi(\alpha)}$ while the path fluctuations
are of magnitude $\tau^{2/3}N^{\zeta(\alpha)}$.  In other words,  in the
macroscopic variables we see again the exponents $\tfrac13$ and $\tfrac23$. 
\item[(iv)]   In the fixed temperature case ($\alpha=0$) the lower bound $\chi\ge 1/3$ 
was already proved in \cite{SV} for the stationary version.  Here we give a 
 considerably simpler proof of the lower bound, including the case $\alpha=0$.  
\item[(v)]   In the critical case $\alpha=1/4$ we can connect with
the KPZ equation.  The macroscopic variable $\tau$ becomes the time parameter
of the stochastic heat equation (SHE), and we obtain again the exponent of
the  stationary Hopf-Cole solution of the  KPZ equation,  first proved in \cite{BQS}:
$\Vvv [\log \she(\tau,0)] \asymp \tau^{\frac23}$ where $\she$ is the
solution of SHE. Moreover, we prove similar bounds for solutions where the initial condition is a bounded perturbation of the stationary initial condition. 
\end{enumerate}  

Some further comments about the  state of the field and the
place of this work are in order.    Presently one can identify the 
following  three
approaches to  fluctuations of   polymer models and of models in
the KPZ class more broadly.  
\begin{enumerate}
\item[(a)]  The \emph{resolvent method}.  This is a fairly robust method used to establish 
superdiffusivity.  It is quite general, for it can often be applied as long as a
model has a tractable invariant distribution \cite{B1, LQSY, QV1, QV2, QV3,  Yau}. 
A drawback of the method is that often it cannot determine the exact exponents but
  provides only  bounds on them.  However, here are two exceptions.  
   In \cite{Yau} the scaling exponent of a 2d TASEP model is identified exactly. In \cite{QV1, QV2} the method is used to give a comparison between the solvable 1d TASEP and more general 1d exclusion models to show that the scaling exponents are the same.  

  \item[(b)]   The \emph{coupling method}, represented by the present work
  and references \cite{BCS, BKS, BQS, BS, CG,  S, SV}.  
 This approach is able to identify exact exponents, but so far has
 depended  on the
 presence of special structures such as a Burke-type property. 
 \item[(c)]   \emph{Exact solvability methods}.  When it can be applied, 
this approach leads to the sharpest 
results, namely   Tracy-Widom limit distributions.   But it is the
most specialized and technically very heavy.   This approach became
available for the semidiscrete polymers  after 
determinantal expressions where found for the distribution of $\log Z$
\cite{BC, BCF, O}. For the related log-gamma polymers, see \cite{BCR, cosz}. For the ASEP the first scaling limits were proved using Fredholm determinant formulas based on the work of \cite{trac-wido-08jsp}. The recent work of \cite{BoCS} uses  certain duality relations to get scaling limits for the same model.  Their method can be thought of as a rigorous version of the so-called `replica trick'. 
 \end{enumerate} 

The free energy exponent $\chi=1/3$ in the fixed temperature case  ($\alpha=0$) 
is   also  a consequence of   the distributional limits for $\log Z$  in  
 \cite{BC, BCF}.    Presently these results cover
the point-to-point case of the semidiscrete polymer for the entire fixed temperature range $0<\beta<\infty$.  
It is expected that these methods should work also in the intermediate disorder
regime (personal communication from the authors). However, these works
do not yet give anything on the stationary versions of the models, or on the
path fluctuations in either the point-to-point or stationary version.    And even in the cases covered by the
distributional limits,  our results give sharper bounds on moments and 
deviations that  weak limits alone cannot provide.   
 
 The open problem that remains  in the coupling approach used here is
 the lower bound for the path in the point-to-point case.

\smallskip 
 
 One more  expected universal feature of polymer exponents worth
 highlighting here  is the scaling
relation $\chi=2\zeta-1$.  This is expected to hold very generally across models and dimensions. 
The exponents we derive satisfy this identity.  There is important recent
work on this identity that goes beyond exactly solvable models:
first   \cite{Ch}, and then \cite{AD} with a simplified proof, 
derived this relation for first passage percolation under   strong
assumptions on the existence of the exponents. 
These results are extended to positive temperature  
 directed polymers   in   \cite{AD2}.
 
 Finally, we point out that the coupling method applied to directed polymers first appeared in the work \cite{S} in the  context of discrete polymers in a log-gamma environment. Most of the results of \cite{SV} have discrete analogues in \cite{S}. The intermediate regime can also be investigated for the polymers in log-gamma environment. Although this model is formulated for $\beta=1$, the parameters of the environment can be tuned to emulate the situation $\beta\to0$. We have obtained proofs for the fluctuation exponents of the log-gamma model in the intermediate scaling regime. 
 The methods are very similar to the ones used here for the semidiscrete polymer model, but involve considerably heavier asymptotics   so we decided not to include them in the present paper.  
 

\vspace{2ex}
\noindent  \textsc{Organization of the paper.} We introduce the   directed polymer in a Brownian environment in its point-to-point and stationary versions and state our main theorems in Sections  \ref{sec-ptp-dpbe} and \ref{sec-stat-dpbe}.   Their proofs are  in Section \ref{pf-dpbe}.
In Section \ref{sec-KPZ} we state our results for the KPZ equation. The proofs are given in Section \ref{pf-KPZ}. Some basic estimates on polygamma functions are provided in Section \ref{gtb}.

\vspace{2ex}

\noindent  \textsc{Notation and conventions.} 
$\bN=\{1,2,3,\dotsc\}$ and $\bZ_+=\{0,1,2,\dotsc\}$. 
For $\theta>0$,  
the usual gamma function is $\Gamma(\theta)=\int_0^\infty s^{\theta-1}e^{-s}\,ds$ 
 and the Gamma($\theta, r$) distribution has density 
$f(x)=r^\theta\Gamma(\theta)^{-1} x^{\theta-1}e^{-rx}$ for $0<x<\infty$. If $r=1$ then we drop it from the notation, i.e.~Gamma$(\theta)$ is the same as Gamma$(\theta,1)$.
We use the notations    $\Psi_0=\Gamma'/\Gamma$ and $\Psi_1=\Psi_0'$  are the digamma and trigamma functions.  $\Psi_1^{-1}$
  is the inverse function of $\Psi_1$. See Section \ref{gtb} for more on polygamma functions and their relations to the Gamma$(\theta, r)$ distribution. 

The environment distribution $\P$ has expectation symbol $\E$. Generically expectation under a
 probability measure $Q$ is denoted by $E^Q$.    
  To simplify notation we drop integer parts.  A real value $s$ in a position
  that takes an integer should be interpreted as the  integer part $\lfloor s\rfloor$.  
 
 \vspace{2ex}

\noindent  \textsc{Aknowledgements.}   The authors 
thank Michael Damron for the decomposition  idea
in  the proof of Theorem \ref{thm:freeZa}.

\subsection{The semi-discrete polymer in a Brownian environment} 
We begin with the results for   the semi-discrete polymer in a Brownian environment. This is a semi-discrete version of the generic polymer model described in (\ref{pf-dpre}). As already mentioned, the model has two versions: a point-to-point and a stationary version.

\subsubsection{Point-to-point semi-discrete polymer}\label{sec-ptp-dpbe}
The environment consists of a family of independent one-dimensional
standard Brownian motions $\{B_i(\cdot): i\geq 1\}$. These are two-sided  Brownian motions with   
  $B_i(0)=0$. 
  Polymer paths are nondecreasing 
  c\`adl\`ag paths  $x: [0,t]\to \bN$ 
with nearest-neighbor jumps,   $x(0)=1$, and $x(t)=n$.    
A path can be coded in terms of its jump times 
  $0=s_0<s_1<\cdots<s_{n-1}<s_n=t$.  
At level $k$
 the path  collects the increment  
$B_k(s_{k-1},s_k)=B_k(s_k)-B_k(s_{k-1})$. 
The partition function in a fixed  Brownian environment at
inverse temperature $\beta>0$ is, for $(n,t)\in\bN\times[0,\infty)$, 
\be \label{pf-neil}
 \begin{aligned} 
Z_{n,t}(\beta)&=   
\int\limits_{0<s_{1}<\dotsm<s_{n-1}<t}  \exp\bigl[ \beta\bigl(B_1(0,s_1) +B_{2}(s_1,s_{2})
+\dotsm + B_n(s_{n-1},t)\bigr)\bigr] \,ds_{1,n-1} . 
\end{aligned}
\ee
In the integral $ds_{1,n-1}$ is short for $ds_1\dotsm ds_{n-1}$.  
The limiting  free energy density  was computed  for a fixed $\beta$ in \cite{MO}:  
\be\begin{aligned}\label{free-MOC}
\free(\beta) = \lim_{n\to +\infty} \frac{1}{n} \log Z_{n,n}(\beta) 
&= \inf_{t>0}\left\{ t  \beta^2 -\Psi_0(t)
\right\}-2\log \beta\\&=\Psi_1^{-1}(\beta^2)  \beta^2 -\Psi_0(\Psi_1^{-1}(\beta^2) )
-2\log \beta \quad \text{ for $\beta>0$.}   
\end{aligned}\ee


We consider this model 
in the intermediate disorder regime where
$\beta=\betaa n^{-\alpha}$ for   fixed $\betaa\in(0,\infty)$ and  $\alpha\in[0,1/4]$. 
If $0<\alpha\le 1/4$,  $\log Z_{n,n}(\beta)$    concentrates asymptotically around the value
$n \free(\beta_0 n^{-\alpha})
= n+O(n^{1-2\alpha})$.  
(See \eqref{didam1} and \eqref{Psi-3} for the asymptotics of the functions $\Psi_0$ and $\Psi_1$.) 

Our first result  identifies the  free energy fluctuation exponent $\chi=\frac13(1-4\alpha)$ for the point-to-point semi-discrete polymer in the intermediate disorder
regime.   In  the fixed temperature case ($\alpha=0$) the upper bound was proved
in \cite{SV} but  a lower bound proof with coupling methods  is new even in this case.   (To clarify,  the correct exponent in the $\alpha=0$ case  has  of course  been identified in the weak convergence results  \cite{BC, BCF} with exact solvability methods.)    Note that we see
the intermediate regime exponent on the scaling parameter $n$,  but   for the 
macroscopic variable $\tau$ we see the  exponent $\tfrac13$ corresponding to the KPZ scaling.  

\begin{theorem}\label{thm:freeZ} Fix $\alpha\in[0,1/4)$ and $0<\beta_0<\infty$. 
Let $\beta=\betaa n^{-\alpha}$.  
There exist finite positive constants $C, n_0, b_0, \tau_0$ that depend on 
$(\alpha,\beta_0)$ 
such that the following bounds hold.  
 For   $\tau \geq \tau_0,\, n\geq n_0$ and $b\geq b_0$, 
\be \label{free0}
\P\bigl\{\abs{\log Z_{\tau n,\tau n}(\beta)   -\tau n \free(\beta) }
\ge b \, \tau^{\frac13}n^{\frac13(1-4\alpha)}\bigr\}\le C b^{-3/2}   
\ee 
and 
 \be\label{free1}
 C^{-1} \tau^{\frac13} n^{\frac13(1-4\alpha)} \;\leq \; 
 \E\abs{\log Z_{\tau n, \tau n}(\beta) 
-\tau n \free(\beta) } \; \leq \; C \tau^{\frac13} n^{\frac13(1-4\alpha)}. 
\ee
 \end{theorem}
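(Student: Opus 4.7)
The plan is to adapt the coupling/boundary approach of \cite{SV} (which handled $\alpha=0$) and extend it through the intermediate regime, the central idea being to replace the fixed-$\theta$ polygamma estimates there with their asymptotic analogues for $\theta \to \infty$. Concretely, I would introduce an auxiliary stationary partition function $Z^{(\theta)}_{n,t}$ coupled to $Z_{n,t}(\beta)$ via the same Brownian environment together with a boundary process indexed by a parameter $\theta>0$. The Burke-type invariance of the stationary model gives an exact variance identity for $\log Z^{(\theta)}_{\tau n, \tau n}$ of Sepp\"al\"ainen--Valk\'o type, roughly $\Var(\log Z^{(\theta)}) = \tau n \Psi_1(\theta) - 2\,\E[\text{exit statistic}]$. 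Comparison inequalities relating $Z$ and $Z^{(\theta)}$ then transfer bounds back to the point-to-point quantity.

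For the upper bound in \eqref{free0}--\eqref{free1}, I would choose $\theta = \Psi_1^{-1}(\beta^2)$ so that the macroscopic characteristic direction from the boundary parameter $\theta$ points exactly toward $(\tau n, \tau n)$; this is precisely the optimizer producing $\free(\beta)$ in \eqref{free-MOC}. Under $\beta = \beta_0 n^{-\alpha}$ with $\alpha \in (0,1/4)$, the trigamma asymptotics from Section \ref{gtb} give $\theta \sim \beta_0^{-2} n^{2\alpha}$, and balancing the two terms in the variance identity (linear in $\tau n$ against the exit-statistic contribution) produces the target scale $\tau^{2/3} n^{2(1-4\alpha)/3}$. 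The tail bound with rate $b^{-3/2}$ then follows from an $L^{3/2}$ estimate on the centered stationary logarithm together with Markov's inequality; a full Gaussian tail is not available by this technique and is unnecessary here.

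For the lower bound in \eqref{free1}, which is genuinely new even in the fixed-temperature case $\alpha=0$, I would split $\log Z^{(\theta)}_{\tau n, \tau n}$ as the sum of a boundary contribution plus an interior term that can be sandwiched by the point-to-point free energy $\log Z_{\tau n, \tau n}(\beta)$. The boundary contribution has known fluctuations of order $\sqrt{\tau n\,\Psi_1(\theta)}$, which exceeds the target scale, so a hypothetical bound $\E\abs{\log Z_{\tau n, \tau n}(\beta) - \tau n\,\free(\beta)} \ll \tau^{1/3} n^{(1-4\alpha)/3}$, combined with a small perturbation $\theta \mapsto \theta \pm \delta$ and the resulting shift of $\tau n\,\free$, would force the stationary variance to undershoot its matching upper bound --- a contradiction. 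The macroscopic factor $\tau^{1/3}$ on the right of \eqref{free1} emerges because every appearance of $n$ in these estimates is really an appearance of $\tau n$ passed through the already-proved $\alpha=0$ scaling behavior.

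The principal obstacle is maintaining uniform control over $\alpha \in [0, 1/4)$ as $\theta \to \infty$: the polygamma asymptotics, the exit-statistic tail bounds, and the accuracy of the expansion of $\free(\beta)$ near $\beta=0$ must all be synchronized so that lower-order terms remain genuinely subdominant against the target scale $\tau^{1/3} n^{(1-4\alpha)/3}$, even as $\alpha$ approaches the critical value $1/4$ (where the exponent collapses and the constants in the estimates begin to compete with the signal). A secondary technical point is extracting the sharp tail exponent $b^{-3/2}$, which requires a slightly finer handle on the stationary exit-statistic distribution than a crude second-moment bound would provide.
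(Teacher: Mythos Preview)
Your overall framework---coupling the point-to-point partition function to the stationary one $Z^\theta_{n,t}$ with $\theta=\Psi_1^{-1}(\beta^2)$, then transferring bounds---matches the paper. But both the upper-bound tail mechanism and the lower-bound argument have genuine gaps relative to what is actually needed.

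\textbf{Upper bound and the $b^{-3/2}$ tail.} You say the $b^{-3/2}$ tail ``follows from an $L^{3/2}$ estimate on the centered stationary logarithm together with Markov's inequality.'' The paper does not establish any such $L^{3/2}$ bound, and this is not where the exponent $3/2$ comes from. Chebyshev on the stationary variance only gives $b^{-2}$ for $\lvert\log Z^\theta_{n,t}-f_n\rvert$; the bottleneck is the comparison step. The paper proves a ratio estimate (Lemma~\ref{LBlemma3}):
\[
\P\Bigl(\tfrac{Z^\theta_{n,t}}{Z_{(1,n),(0,t)}}\ge e^{b n^{1/3}\theta^{-2/3}}\Bigr)\le Cb^{-3/2},
\]
and this is the source of the $3/2$. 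The argument sets $u=\sqrt{b}\,n^{2/3}\theta^{-4/3}$ and controls $Z^\theta_{n,t}(\lvert\sigma_0\rvert\le u)/Z_{(1,n),(0,t)}$ via a Girsanov-type comparison with a perturbed parameter $\lambda=\theta\pm\nu$, plus the exit-point tail $\P(Q^\theta(\lvert\sigma_0\rvert>u)\ge\tfrac12)\le Cb^{-3/2}$ (the $\sqrt{b}$ in $u$ converts the $b^{-3}$ exit-point tail into $b^{-3/2}$). An additional large-deviation lemma (using the GUE connection for $Z_{(1,n),(0,t)}$) handles the range $b\gtrsim n^{2/3}\theta^{2/3}$. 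None of this is visible in your proposal.

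\textbf{Lower bound.} Your contradiction scheme---decompose $\log Z^\theta$ as boundary plus an ``interior sandwiched by the point-to-point,'' assume the point-to-point fluctuations are too small, perturb $\theta$, and force the stationary variance to undershoot---is not what the paper does and, as stated, does not close. There is no two-sided sandwich of $\log Z^\theta$ by $\log Z_{(1,n),(0,t)}$ plus a boundary term; one only has the one-sided inequality $Z^\theta_{n,t}\ge e^{r_1(0)}Z_{(1,n),(0,t)}$. The paper's argument (credited to Damron) is direct: from Proposition~\ref{prop-lower-bd} one has $\P(A)\ge\delta_2$ for $A=\{\log Z^\theta_{n,t}\ge f_n+\delta_1 n^{1/3}\theta^{-2/3}\}$, and then writing $f_n=\E[\log Z^\theta\mathbf{1}_A]+\E[\log Z^\theta\mathbf{1}_{A^c}]$ and applying the one-sided inequality on $A^c$ yields
\[
\delta_1\delta_2\, n^{1/3}\theta^{-2/3}\le \E\lvert f_n-\log Z_{(1,n),(0,t)}\rvert + \E\lvert r_1(0)\rvert,
\]
with $\E\lvert r_1(0)\rvert=O(\log n)$. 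No perturbation of $\theta$ and no variance contradiction are involved. Your proposed route would need a genuine two-sided comparison that is not available in this model.
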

 
 \smallskip
 
We turn to the  fluctuations of the polymer path. The 
quenched polymer measure
$Q_{n,t, \beta}$ on paths is defined, in terms of the expectation
of  a bounded Borel  function  $f:\bR^{n-1}\to\bR$,  by 
\begin{align*}
E^{Q_{n,t,\beta}}f(\sigma_1,\dotsc,\sigma_{n-1})&=\frac{1}{Z_{n,t}(\beta)}   
\int\limits_{0<s_{1}<\dotsm<s_{n-1}<t} f(s_1,\dotsc,s_{n-1}) \\[2pt] 
&\qquad\qquad \times \  \exp\bigl[ \beta\bigl(B_1(0,s_1)
+\dotsm + B_n(s_{n-1},t)\bigr)\bigr] \,ds_{1,n-1} . 
\end{align*}
  The jump
times as functions of the path are denoted  by $\sigma_i$. 
Averaged (or \emph{annealed}) probability and expectation are   denoted by  
$P_{n,t, \beta}(\cdot)=\bE Q_{n,t, \beta}(\cdot)$  and 
 $E_{n,t,\beta}(\cdot)=\bE E^{Q_{n,t,\beta}}(\cdot)$. 
 
In the point-to-point setting the path exponent $\zeta$ describes the order
of magnitude of the  deviations 
of the path  from the diagonal.  A path close to the diagonal in the
rectangle $\{1,\dotsc,n\}\times[0,t]$  would have $\sigma_i\approx it/n$.  
 The next theorem shows that the  path  
exponent $\zeta$ is bounded above by its conjectured value
$\frac23(1-\alpha)$.    

\begin{theorem}\label{thm:path-ptp} 
Fix $\alpha\in[0,1/4)$ and $0<\beta_0<\infty$. 
Let $\beta=\betaa n^{-\alpha}$.  
There exist finite positive constants $C, n_0, b_0, \tau_0$ that depend on 
$(\alpha,\beta_0)$ 
such that the following bound holds.  For all $0<\gamma<1$,  $\tau \geq \tau_0$, $b\ge b_0$,
 and $ n\geq n_0$, 
\begin{eqnarray}
 P_{n,t,\beta}\left\{ \abs{\sigma_{\gamma \tau n}  - \gamma \tau n} \geq b \,\tau^{\frac23} n^{\frac23
(1-\alpha)} \right\}   
 \leq C b^{-3}. 
\end{eqnarray}
 \end{theorem}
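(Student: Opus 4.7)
The plan is to reduce the transversal path deviation to a free energy comparison on off-diagonal sub-rectangles and then invoke Theorem \ref{thm:freeZ}. By symmetry I treat only the upper tail $A_u=\{\sigma_{\gamma\tau n}\ge \gamma\tau n+u\}$ with $u=b\tau^{2/3}n^{\frac23(1-\alpha)}$. On $A_u$, at the cut time $s^{*}=\gamma\tau n+u$ the path passes through some level $k\le\gamma\tau n$. Since the Brownian motions at different levels are independent and the increments of a Brownian motion on disjoint time intervals are independent, decomposing by this intermediate level gives the exact identity
\[
Z_{\tau n,\tau n}(\beta)\,Q_{\tau n,\tau n,\beta}(A_u)= \sum_{k=1}^{\gamma\tau n}Z^{(1)}_{k,s^{*}}(\beta)\,Z^{(2)}_{\tau n-k+1,\tau n-s^{*}}(\beta),
\]
where in each summand $Z^{(1)}$ (built from the data $B_1,\ldots,B_k$ on $[0,s^{*}]$) and $Z^{(2)}$ (built from $B_k,\ldots,B_{\tau n}$ on $[s^{*},\tau n]$) are independent, and each has the law of a standard point-to-point partition function on the corresponding rectangle.

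To turn this identity into a probabilistic bound I use the curvature of the macroscopic free energy. Extending \eqref{free-MOC} to rectangles of aspect ratio $\rho=t/m\ne 1$, the limit $\free(\beta,\rho)=\inf_\theta\{\rho\beta^2\theta-\Psi_0(\theta)\}-2\log\beta$ is strictly concave in $\rho$ at $\rho=1$, with $\partial^2_\rho\free(\beta,1)=\beta^4/\Psi_1'(\Psi_1^{-1}(\beta^2))<0$. The polygamma expansions of Section \ref{gtb} (in particular $\Psi_1^{-1}(\beta^2)\sim 1/\beta^2$ and $\Psi_1'(\theta)\sim -1/\theta^2$) show this second derivative tends to a finite negative constant $-\kappa_0$ as $\beta\to 0$. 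Taylor expanding around $\rho=1$ for the off-diagonal cut gives the deterministic deficit
\[
\E\log Z^{(1)}_{\gamma\tau n,s^{*}}+\E\log Z^{(2)}_{(1-\gamma)\tau n,\tau n-s^{*}}-\E\log Z_{\tau n,\tau n}\sim -\frac{\kappa_0\,u^2}{2\gamma(1-\gamma)\tau n},
\]
which at $u=b\tau^{2/3}n^{\frac{2}{3}(1-\alpha)}$ equals $-c(\gamma)\,b^2\tau^{1/3}n^{\frac{1}{3}(1-4\alpha)}$ with $c(\gamma)=\kappa_0/(2\gamma(1-\gamma))$; this is exactly $b^2$ times the fluctuation scale of Theorem \ref{thm:freeZ}.

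The $Cb^{-3}$ tail now follows by combining this deterministic deficit with \eqref{free0}. For $A_u$ to carry nontrivial polymer mass, the random deviations in the three factor free energies must collectively cancel the deterministic gap of $b^2$ units of the fluctuation scale. Since $Z^{(1)}$ and $Z^{(2)}$ are independent, at least one of them must deviate by $\gtrsim cb^2$ units with probability at most $b^{-3/2}$ each by \eqref{free0}; the product of these two independent tails yields $b^{-3}$, after one uses a lower bound on the denominator $Z_{\tau n,\tau n}(\beta)$ (again supplied by \eqref{free0} with failure probability $\le Cb^{-3/2}$). The sum over $k$ and a discretization of $s^{*}$ produce only polylogarithmic factors that are absorbed into $C$ at the cost of a harmless enlargement of $b_0$. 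The lower tail $\{\sigma_{\gamma\tau n}\le\gamma\tau n-u\}$ is handled identically after mirroring the cut through the diagonal.

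The main obstacle is the quantitative curvature step: one must approximate $\E\log Z_{m,t}(\beta)$ by $m\free(\beta,t/m)$ with errors strictly smaller than the target gap $\sim b^2\tau^{1/3}n^{\frac{1}{3}(1-4\alpha)}$, uniformly over the relevant off-diagonal range $(m,t)$ and over $b\ge b_0,\tau\ge\tau_0,n\ge n_0$. This requires both the uniform polygamma asymptotics collected in Section \ref{gtb} and careful propagation of the $O(\tau^{1/3}n^{(1-4\alpha)/3})$ free-energy fluctuation bound from Theorem \ref{thm:freeZ} through the Taylor expansion, without inflating the error beyond the deterministic gain. A secondary technicality is the partial dependence between the numerator factors and the denominator (they share $B_k$ on $[0,s^{*}]$); this is routinely addressed by a stationary-version comparison along the lines of Section \ref{sec-stat-dpbe}, a standard move in the coupling approach of \cite{S,SV}.
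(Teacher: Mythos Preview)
Your curvature computation is correct: the deficit at the extremal cut $k=\gamma\tau n$ is indeed $-c(\gamma)\,b^2\tau^{1/3}n^{(1-4\alpha)/3}$, i.e.\ $b^2$ units of the fluctuation scale of Theorem~\ref{thm:freeZ}.  This is a genuinely different route from the paper, which never uses the concavity of $\rho\mapsto\free(\beta,\rho)$ but instead compares the point-to-point model directly with the stationary model via the one-line inequality $Z_{(1,\ell),(0,s)}\le e^{-r_1(0)}Z^{\theta}_{\ell,s}$ and then controls the ratio $Z^{\theta}_{n,t}/Z_{(1,n),(0,t)}$ (Lemma~\ref{LBlemma3}) and the stationary path tail (Lemma~\ref{main-upper-bound}).

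However, the proposal has two real problems.  First, the probability bookkeeping is wrong.  Applying \eqref{free0} at deviation $cb^2$ units already gives probability $\le C(cb^2)^{-3/2}=C'b^{-3}$, not $b^{-3/2}$; no ``product of independent tails'' is involved, and a simple union bound over the three centered free energies handles a \emph{single} level $k$.  Second, and this is the essential gap, the sum over $k$ cannot be dismissed as ``polylogarithmic.''  You have $\gamma\tau n$ terms.  Even using that the deficit grows with $j=\gamma\tau n-k$, namely $D_j\approx -c_0(1+j/u)^2b^2$ units of scale, the union bound contributes
\[
\sum_{j\ge 0}\bigl[c_0(1+j/u)^2 b^2\bigr]^{-3/2}\;\asymp\; u\,b^{-3}
\;=\;b^{-2}\,\tau^{2/3}n^{\tfrac23(1-\alpha)},
\]
which diverges in $n$ and $\tau$ and destroys the uniform $Cb^{-3}$ bound.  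The only tail input you have from \eqref{free0} is polynomial, so there is no exponential slack to absorb $O(n)$ many terms.  In addition, for $k$ far from $\gamma\tau n$ the aspect ratios leave the parameter range where Theorem~\ref{thm:freeZa} is valid, so you cannot even invoke \eqref{free0} there.

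The paper's argument avoids this entirely: by decomposing at the jump time $\sigma_{\ell}$ and bounding $Z_{(1,\ell),(0,s)}\le e^{-r_1(0)}Z^{\theta}_{\ell,s}$, the whole integral over $s$ is rolled into a single stationary quenched probability $Q^{\theta}_{n,t}(|\sigma_\ell-t'|>u)$, and no union bound over path positions is ever taken.  If you want to salvage the curvature approach you would need either an exponential free-energy tail (not available here) or a way to collapse the sum to $O(1)$ representative terms, neither of which is supplied by Theorem~\ref{thm:freeZ}.
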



\smallskip

\subsubsection{Stationary  semi-discrete polymer}\label{sec-stat-dpbe}
The proofs of the above theorems rely on comparison with a stationary version of the model.
Enlarge the environment by adding another 
Brownian motion $B$ independent of $\{B_i\}_{i\geq 1}$. Introduce a parameter
$\para\in(0,\infty)$ and restrict to $\beta=1$ for a moment. The stationary partition function is,  for $n\in\bN$ and $t\in\bR$, 
\be\begin{aligned}   Z^\para_{n,t} &=\hskip-20pt 
\int\limits_{-\infty<s_{0}<s_1<\dotsm<s_{n-1}<t}  \hskip-20pt \exp\bigl[ -B(s_0)+\para s_0  +  B_1(s_0,s_1) 
++\dotsm + B_n(s_{n-1},t)\bigr] \,ds_{0,n-1}.
\end{aligned}\label{Zdef2}\ee
This model has a useful stationary structure
described by \cite{OY}.  Let $Y_0(t)=B(t)$ and, for $k\geq
1$, define inductively 
\begin{eqnarray}\label{sc-r}
 r_k(t)&=&\log \int^t_{-\infty} e^{Y_{k-1}(s,t)-\theta(t-s)+B_k(s,t)}ds\\
 \label{sc-Y}
 Y_k(t)&=&Y_{k-1}(t)+r_k(0)-r_k(t).
\end{eqnarray}
Induction  shows that
\begin{eqnarray}\label{Zr-ident}
 Z^{\theta}_{n,t}\, e^{B(t)-\theta t} &=& \exp\Bigl(\; \sum^n_{k=1} r_k(t)\Bigr) .
\end{eqnarray}
  For each fixed   $t\ge 0$,  $\{r_k(t)\}_{k\ge 1}$ are
i.i.d.\ and $e^{-r_k(t)}$ has  Gamma$(\theta)$ distribution \cite{OY}. Thus the law of $Z^{\theta}_{n,t}\, e^{B(t)-\theta t}$ is  independent of $t$.   This stationarity is part of a broader Burke-type property (see  
  \cite[Section 3.1]{SV} for more details).


Extend   definition (\ref{pf-neil})  
to $1\le k\le n \in \bN$ and $s<t\in\bR$ by 
\be\begin{aligned} 
Z_{(k,n),(s,t)}(\beta)&=   
\int\limits_{s<s_{k}<\dotsm<s_{n-1}<t}  \exp\bigl[ \beta\bigl(B_k(s,s_k) +B_{k+1}(s_k,s_{k+1})
+\dotsm + B_n(s_{n-1},t)\bigr)\bigr] \,ds_{k,n-1}, 
\end{aligned}\label{zetadef1}\ee
and abbreviate the $\beta=1$ case as  $Z_{(k,n),(s,t)}=Z_{(k,n),(s,t)}(1)$.  The stationary
partition function can be recovered by integrating these point-to-point partition functions against
the boundary Brownian motion:
\[ 
 Z^\para_{n,t} =
\int^t_{-\infty}ds_0 \,   e^{ -B(s_0)+\para s_0}
 \, Z_{(1,n),(s_0,t)}.
\]
We include 
the inverse temperature in   the stationary
partition function by defining 
\be\begin{aligned}   Z^{\para,\beta}_{n,t} &=
\int\limits_{-\infty<s_{0}<s_1<\dotsm<s_{n-1}<t}  \exp\bigl[ -\beta B(s_0)+\beta\para s_0   + \beta\bigl( B_1(s_0,s_1)  +\dotsm + B_n(s_{n-1},t)\bigr) \bigr] \,ds_{0,n-1}.
\end{aligned}\label{Zdef2.1}\ee

The following theorem identifies the fluctuation exponent $\chi$ for the stationary model.  
A key difference between the point-to-point and stationary versions is that KPZ fluctuations appear in the 
stationary version only in a particular characteristic direction $(n,t)$ 
 determined by the parameters.   In other directions the diffusive fluctuations
of the boundaries dominate (see \cite{S}, Corollary 2.2, in the context of discrete polymers in a log-gamma environment).  
Once we choose 
$\beta=\beta_0n^{-\alpha}$,  to make the diagonal a characteristic direction
 we are forced
to pick $\theta  = \beta \Psi^{-1}_1(\beta^2)\sim \beta^{-1}$.  To simplify notation we suppress   the  $n$-dependence
of the parameters  $\beta$ and $\theta$.   
\begin{theorem}\label{thm-unscaled} Let  $\alpha\in[0,1/4]$, $\beta=  \beta_0n^{-\alpha}$,
and $\theta = \beta \Psi^{-1}_1(\beta^2)$. Then there exist positive constants $C_1,\,
C_2, \tau_0$ depending only on  $\alpha$ and $\beta_0$ such that
  \be 
C_1 \tau^{\frac23} n^{\frac23(1-4\alpha)} \le \Vvv (\log Z_{\tau n,\tau n}^{\para, \beta})\le  C_2
\tau^{\frac23} n^{\frac23(1-4\alpha)} 
\ee
for all $\tau\geq \tau_0$ and $n\ge 1$.
\end{theorem}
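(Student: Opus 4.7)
The plan is to combine Brownian scaling with the coupling/exit-point machinery developed for the stationary semi-discrete polymer in \cite{SV}, carefully tracking the fact that the auxiliary parameter $\theta$ now diverges with $n$.

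The first step is a reduction to $\beta=1$. Brownian scaling $\{B(\beta^2 s)\}_s \eqd \{\beta B(s)\}_s$ and the change of variables $u_j = \beta^2 s_j$ in \eqref{Zdef2.1} yield the distributional identity
$$\log Z^{\theta,\beta}_{N,T} \eqd -2N\log\beta + \log Z^{\theta'}_{N,T'}, \qquad \theta' = \theta/\beta,\quad T' = \beta^2 T,$$
so variances are preserved. Setting $N = T = \tau n$, $\beta = \beta_0 n^{-\alpha}$, and $\theta = \beta\Psi_1^{-1}(\beta^2)$, one gets $\theta' = \Psi_1^{-1}(\beta^2)$ and $T' = \beta_0^2\tau n^{1-2\alpha}$. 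The ratio $T'/N = \beta^2 = \Psi_1(\theta')$ is exactly the characteristic-direction condition for the $\beta=1$ polymer with parameter $\theta'$; remaining on characteristic is essential to avoid diffusive boundary fluctuations dominating.

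Next, the Burke-type identity \eqref{Zr-ident} expresses $\log Z^{\theta'}_{N,T'}$ as an explicit combination of the boundary Brownian $B(T')$ and the (marginally i.i.d.) $-\log$-Gamma$(\theta')$ increments $r_k(T')$. As in \cite{SV}, this leads to a variance identity of the schematic form
$$\Var(\log Z^{\theta'}_{N,T'}) = T' + N\Psi_1(\theta') - 2\,\mathbb{E}[\textup{exit-weight contribution}].$$
On characteristic the first two terms sum to $2T' = 2\beta_0^2\tau n^{1-2\alpha}$, which is far larger than the target $\tau^{2/3}n^{2(1-4\alpha)/3}$: the KPZ-order variance must arise from a nearly exact cancellation with the exit-weight term. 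Producing this cancellation is the job of the coupling argument --- perturb $\theta'$ to $\theta'\pm\delta$, compare the two stationary systems path-by-path, and translate the comparison into upper and lower tail bounds on the location $\xi^*$ at which the optimal path leaves the boundary. For the lower bound I would use the simplified argument advertised in item (iv) of the introduction.

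The main obstacle is that the coupling estimates of \cite{SV} are stated for a single fixed $\theta$, while here $\theta' = \Psi_1^{-1}(\beta^2) \to \infty$ at rate $\beta^{-2}\sim n^{2\alpha}$. The technical heart of the proof is to make all implicit constants quantitative in the polygamma functions of $\theta'$, using $\Psi_1(\theta)\sim 1/\theta$ and $|\Psi_2(\theta)|\sim 1/\theta^2$ for large $\theta$ (Section \ref{gtb}). With the natural KPZ scale $N_* = N^{2/3}|\Psi_2(\theta')|^{-2/3}$, one expects a tail bound $\mathbb{P}(\xi^* > uN_*) \leq Cu^{-3}$; feeding it back into the variance identity yields
$$\Var(\log Z^{\theta'}_{N,T'}) \asymp N^{2/3}|\Psi_2(\theta')|^{2/3} \asymp (\tau n)^{2/3}\beta^{8/3} = \tau^{2/3} n^{2(1-4\alpha)/3},$$
matching the claim. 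Carrying the polygamma bookkeeping uniformly through the coupling argument, as $\theta'$ diverges, is the main technical burden.
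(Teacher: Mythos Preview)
Your overall plan --- reduce to $\beta=1$ by Brownian scaling, land on the characteristic direction $T'/N=\Psi_1(\theta')$, and then run the coupling/perturbation machinery of \cite{SV} while tracking the $\theta'$-dependence through polygamma asymptotics --- is exactly what the paper does. The reduction and the identification $\theta'=\Psi_1^{-1}(\beta^2)$, $T'=\beta_0^2\tau n^{1-2\alpha}$ are correct, as is the observation that the crux is uniformity in the diverging parameter $\theta'$.

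However, you have the variance identity wrong, and this distorts the picture you describe. The identity from \cite{SV} (equation \eqref{var-ident} in the paper) reads
\[
\Vvv(\log Z^{\theta'}_{N,T'}) \;=\; N\Psi_1(\theta')-T'+2E^{\theta'}_{N,T'}(\sigma_0^+)
\;=\; E^{\theta'}_{N,T'}\lvert\sigma_0\rvert,
\]
so on the characteristic direction the deterministic terms $N\Psi_1(\theta')-T'$ vanish \emph{exactly} and the variance is simply the annealed mean of $|\sigma_0|$. There is no ``nearly exact cancellation'' to engineer: the coupling argument is used only to show $E|\sigma_0|\asymp N^{2/3}\theta'^{-4/3}$, and that is the whole story. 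Your schematic $T'+N\Psi_1(\theta')-2\E[\cdot]$ has the wrong sign, and your belief that a large $2T'$ must be nearly cancelled would send you chasing a mechanism that does not exist.

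A related slip: your exit scale $N_*=N^{2/3}|\Psi_2(\theta')|^{-2/3}$ has the exponent inverted. Since $|\Psi_2(\theta')|\sim\theta'^{-2}$, your $N_*\sim N^{2/3}\theta'^{4/3}$, whereas the correct scale for $\sigma_0$ (and hence, by the identity above, for the variance itself) is $N^{2/3}\theta'^{-4/3}=N^{2/3}|\Psi_2(\theta')|^{2/3}$. This is why your exit-point scale and your variance scale disagree in your write-up; they must coincide.

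With the correct identity in hand the paper proceeds as follows. \emph{Upper bound:} perturb $\theta'$ to $\lambda=\theta'+ru\theta'^2/N$, use the comparison $Q^{\theta'}_{N,T'}(\sigma_0^+\ge u)\le e^{(\theta'-\lambda)u}Z^\lambda_{N,T'}/Z^{\theta'}_{N,T'}$, apply Chebyshev together with Lemma~\ref{4.1lem}, and bootstrap a self-referential inequality for $E(\sigma_0^+)$ into $E(\sigma_0^+)\le c\,N^{2/3}\theta'^{-4/3}$ (Lemma~\ref{main-upper-bound}). \emph{Lower bound:} perturb to $\lambda=\theta'+b\theta'^{2/3}N^{-1/3}$, use shift-invariance \eqref{altQ2} to localize $\sigma_0$ under $Q^\lambda$, then a Cameron--Martin--Girsanov change of measure on the boundary Brownian motion to transfer the estimate back to parameter $\theta'$, yielding a distributional lower bound on $\overline{\log Z^{\theta'}_{N,T'}}$ (Proposition~\ref{prop-lower-bd}). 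Both steps are carried out with constants depending only on a lower bound $\theta_0$ for $\theta'$ and the hypothesis $\theta'\le\theta_0^{-1}\sqrt{N}$, which is precisely the uniformity you correctly identified as the main technical burden.
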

The stationary  quenched polymer
measure $Q^{\para,\beta}_{n,t}$ lives on nondecreasing c\'adl\'ag
paths $x:(-\infty,t]\to\{0,1,\dotsc,n\}$ with boundary conditions   $x(-\infty)=0$, $x(t)=n$. We
represent paths again in terms of   jump times $-\infty<\sigma_0<\sigma_1<\dotsm<\sigma_{n-1}\le t $ {where}
$x(\sigma_i-) = i< i+1=x(\sigma_i)$. 
The path  measure is defined   by 
\be\begin{aligned} 
&E^{Q^{\para,\beta}_{n,t}}f(\sigma_0,\sigma_1,\dotsc,\sigma_{n-1})  
=\frac1{Z_{n,t}^{\para,\beta} } 
\int\limits_{-\infty<s_{0}<\dotsm<s_{n-1}<t}  f(s_0, s_1,\dotsc, s_{n-1}) 
 \\[7pt]
 &\hskip50pt  \times  \exp\bigl[ -\beta B(s_0)+\beta \para s_0 +  \beta \left(B_1(s_0,s_1) 
+\dotsm + B_n(s_{n-1},t)\right)\bigr] \,ds_{0,n-1}.
\end{aligned}\label{Qdef2}
\ee
Averaged probability and expectation are   denoted by  
$P^{\para,\beta}_{n,t}(\cdot)=\bE Q^{\para,\beta}_{n,t}(\cdot)$  and 
 $E^{\para,\beta}_{n,t}(\cdot)=\bE E^{Q^{\para,\beta}_{n,t}}(\cdot)$. When $\beta=1$, we simply remove it from the notation.

In the stationary case we can identify the exact   path  exponent  
$\zeta=\frac23(1-\alpha)$. 
\begin{theorem}\label{thm:statpath} Let $\alpha\in[0,\frac14]$, $\beta= \beta_0 n^{-\alpha}$ and 
$\theta =  \beta \Psi^{-1}_1(\beta^2)$.   The following bounds hold
for all $\gamma\in (0,1)$ and  $0<\tau<\infty$.

{\rm (Upper bound on the tail.)}  
There is a constant $C=C(\alpha,\beta_0)>0$  such that,
if   $\tau n\ge 1$ and $b\ge 1$
  \begin{align}
P^{\para,\beta}_{\tau n,\tau n}\left\{ |\sigma_{\gamma \tau n}-\gamma \tau n|>b
\tau^{\frac23}n^{\frac23(1-\alpha)}\right\} \leq
C b^{-3}.\label{statpathUB}
\end{align}

{\rm (Bounds on the first absolute moment.)}  

\begin{align}
\label{statpath3} C_1^{-1} \tau^{\frac23}n^{\frac23(1-\alpha)}\le  E^{\para,\beta}_{\tau n,\tau n} |\sigma_{\gamma \tau n}-\gamma \tau n|\le C_1 \tau^{\frac23}n^{\frac23(1-\alpha)}
\end{align}

\end{theorem}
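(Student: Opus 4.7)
The plan is to follow the coupling approach of \cite{S,SV}, exploiting the Burke-type stationarity of the polymer with boundary together with the variance bounds from Theorem \ref{thm-unscaled}. I carry it out in three steps.

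First I would prove the analogue of \eqref{statpathUB} for the boundary exit time $\sigma_0$, namely $P^{\theta,\beta}_{\tau n,\tau n}\{|\sigma_0|>b\tau^{2/3}n^{2(1-\alpha)/3}\}\le C b^{-3}$. The mechanism is the identity
\[
\log Z^{\theta+\delta,\beta}_{m,s}-\log Z^{\theta,\beta}_{m,s}=\log E^{Q^{\theta,\beta}_{m,s}}\bigl[\exp(\beta\delta\sigma_0)\bigr],
\]
which together with the convexity of $\theta\mapsto\log Z^{\theta,\beta}_{m,s}$ translates a large deviation of $\sigma_0$ into an atypically large discrepancy $\log Z^{\theta+\delta,\beta}-\log Z^{\theta,\beta}$. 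Choosing $\delta$ so that the mean shift between the two characteristic directions matches $b\,\tau^{2/3}n^{2(1-\alpha)/3}$, and combining with the variance bound $\Vvv(\log Z^{\theta\pm\delta,\beta})\le C\tau^{2/3}n^{2(1-4\alpha)/3}$ from Theorem \ref{thm-unscaled} via Chebyshev, produces the $b^{-3}$ rate: one factor of $b^{-1}$ from the shift-to-event comparison and a further $b^{-2}$ from the variance. The precise scaling of $\delta$ is read off from the polygamma asymptotics collected in Section \ref{gtb}.

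Second, I would pass from $\sigma_0$ to the bulk jump time $\sigma_{\gamma\tau n}$. By the Burke-type property (see \cite[Section 3.1]{SV} and \cite{OY}), the sub-polymer on $\{1,\ldots,\gamma\tau n\}\times(-\infty,s]$ is itself a stationary polymer with the same parameters, and the event $\{\sigma_{\gamma\tau n}>\gamma\tau n+b\tau^{2/3}n^{2(1-\alpha)/3}\}$ is realized as the exit-time event of this sub-polymer of macroscopic size $\gamma\tau n$. Applying Step 1 to the sub-polymer then yields the right tail in \eqref{statpathUB}; a symmetric argument using the reversed polymer on $\{\gamma\tau n,\ldots,\tau n\}\times[s,\tau n]$ handles the left tail. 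For Step 3, the upper bound in \eqref{statpath3} is immediate by integrating the tail bound, while the lower bound follows from a covariance identity derived via Gaussian integration by parts on the boundary Brownian motion, which relates $\Vvv(\log Z^{\theta,\beta}_{\tau n,\tau n})$ to second moments of $\sigma_0$, combined with the lower bound $\Vvv(\log Z)\ge C_1\tau^{2/3}n^{2(1-4\alpha)/3}$ from Theorem \ref{thm-unscaled}.

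The main technical obstacle is the calibration of $\delta$ in Step 1: one has to determine how a shift $\theta\to\theta+\delta$ changes the characteristic direction and hence the mean of $\sigma_0$, and then verify that the $b^{-3}$ rate emerges with constants uniform in $n$, $\tau$, and $\alpha\in[0,1/4]$. This hinges on sharp asymptotics of $\Psi_1^{-1}$ and the next polygamma function as $\beta=\beta_0 n^{-\alpha}\to 0$, which govern both the location of the characteristic direction and the curvature of the limiting free energy — precisely what Section \ref{gtb} is designed to provide. The endpoint case $\alpha=1/4$ is a further subtlety: there $\Vvv(\log Z)$ grows only in $\tau$ and not in $n$, so the coupling must be arranged uniformly in $n$, which also connects to the KPZ statements of Section \ref{sec-KPZ}.
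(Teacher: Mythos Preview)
Your plan is essentially the paper's own proof: Step 1 is Lemma \ref{main-upper-bound} (the perturbation $\theta\to\lambda$ combined with Chebyshev on $\log Z^\lambda-\log Z^\theta$, using Lemma \ref{4.1lem} to control the variance at the shifted parameter), Step 2 is exactly the shift invariance \eqref{altQ2}--\eqref{altQ3} after Brownian scaling, and Step 3 is the variance identity \eqref{var-ident} together with the bounds \eqref{aa-4} and \eqref{Prop4LBvar}. Two small corrections: the identity \eqref{var-ident} reads $\Vvv(\log Z^\theta_{n,t})=E^\theta_{n,t}|\sigma_0|$ (first absolute moment, not second), and the reduction via \eqref{altQ3} sends $\sigma_{\gamma\tau n}$ to $\sigma_0$ of the polymer on the \emph{remaining} $(1-\gamma)\tau n$ levels, not the first $\gamma\tau n$ levels.
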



\smallskip

\subsection{The KPZ equation close to equilibrium}\label{sec-KPZ}
The Kardar-Parisi-Zhang (KPZ) equation was introduced in \cite{KPZ} as a model of a randomly growing interface in $1+1$ dimension: if we let $h(t,x)$ denote the height of the interface at site $x\in \bR$ and time $t\geq 0$, then the evolution of the interface can be represented by the stochastic partial differential equation
\begin{eqnarray}\label{KPZ}
 \partial_t h &=& \tfrac12 \Delta h + \tfrac12 \left( \nabla h\right)^2 + \wh,
\end{eqnarray}
where $\wh$ is a space-time white noise. We will be interested in initial conditions of the form $\cB+\vf$ where $\cB$ is a double-sided one-dimensional Brownian motion and $\vf$ is a bounded function. It is well known that the meaning of a solution to equation (\ref{KPZ}) is a delicate matter. We will always consider the so called Cole-Hopf solution: we take $h=-\log \she$, where $\she$ solves the stochastic heat equation
\begin{eqnarray}\label{shevf1}
 \partial_t \she^{\vf} &=& \tfrac12 \Delta \she^{\vf} + \wh \she^{\vf}\\
 \label{shevf2}
 \she^{\vf}(0,x)&=&e^{\vf(x)+\cB(x)}.  
\end{eqnarray}
The relation between (\ref{KPZ}) and (\ref{shevf1})-(\ref{shevf2}) can be seen by a formal application of It\^o's formula. For a more detailed overview of the KPZ equation, we refer the reader to the review \cite{Ivan} and its references. See also \cite{H} for   recent rigorous work on solving  (\ref{KPZ}).   

It is expected that, for a wide family of initial conditions, the fluctuations of $\log \she(t,x)$ are of order $t^{1/3}$. This was first proved in \cite{BQS} in the stationary case with $\vf=0$. The proof was based on  the convergence of the rescaled height function of the weakly asymmetric exclusion process to the Cole-Hopf solution of the KPZ equation \cite{BG},  together with non-asymptotic fluctuation bounds on the current of the asymmetric simple exclusion process \cite{BS}. It is not clear that this approach can be  extended to the case of non-zero $\vf$. 

When the initial condition is $\she(0,x)=\delta_0(x)$, the asymptotic distribution of the fluctuations of $\log \she$ is identified in \cite{ACQ} as the Tracy-Widom distribution. The proof is based on heavy asymptotic analysis of exact formulas for the weakly asymmetric simple  exclusion process.

We will extend the result of \cite{BQS} to the case of a bounded perturbation $\vf$. Our approach is different as we use an approximation of $\she$ by partition functions of the Brownian semidiscrete directed polymer in the critical case $\alpha=\frac14$ rather than by particle systems.

Building on the techniques of \cite{AKQ0}, it is shown in \cite{MQR-M} that a suitable
renormalization of the partition function of the semi-discrete model with $\alpha=\frac14$ converges
to $\she^{\vf}$. More precisely, let 
$\vf_n(x)= \vf\big(\!-\tfrac{x}{\sqrt{n}}\big)$ and let 
\begin{align}\label{shevf3}   
 Z^{\para,\beta,\vf}_{n,t} &=
\int_{-\infty}^t   \exp\bigl[ \vf_n(s_0)-\beta B(s_0)+\beta\para s_0] \, 
  Z_{(1,n)(s_0,t)}(\beta) \,ds_{0}.
\end{align}
The renormalized partition function is
\begin{eqnarray*}   
\she^{\vf}_n(\tau) &=& e^{-\frac12 \tau \sqrt{n}}\, Z^{\para,\beta, \vf}_{\tau n,\tau n}.
\end{eqnarray*}
When $\vf=0$, we simply denote this by $\she_N(\tau)$.

\begin{theorem}\label{thm:monster}\cite{MQR-M}
 Let $\beta=\beta_n=n^{-1/4}$ and $\theta= \Psi^{-1}_1(\beta_n^2)$. 
Then as $n\to +\infty$, the process $(\she^{\vf}_n(\tau), \tau\ge 0)$ converges in law
to $(\she^{\vf}(\tau,0), \tau\ge 0)$, where $\she^{\vf}$ solves the stochastic heat equation
\eqref{shevf1}--\eqref{shevf2}.
\end{theorem}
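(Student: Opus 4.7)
The plan is to prove convergence via a Wiener chaos expansion, adapting the strategy of \cite{AKQ0} from the discrete polymer to the semi-discrete Brownian setting. At the critical scaling $\beta_n=n^{-1/4}$, the rescaled partition function $\she^\vf_n(\tau)$ should admit an $L^2$-convergent expansion in multiple It\^o integrals whose kernels converge termwise to those appearing in the Wiener chaos expansion of the mild solution $\she^\vf(\tau,0)$.

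Concretely, I would expand $\exp[\beta\sum_k B_k(s_{k-1},s_k)]$ as a power series and recast the $j$-th term of $Z^{\theta,\beta,\vf}_{\tau n,\tau n}$ as a $j$-fold stochastic integral against the independent Brownian motions $B,B_1,\dots,B_{\tau n}$, with a kernel involving the indicator that the $j$ integration variables are interspersed in the ordered simplex over levels $1,\dots,\tau n$. The renormalization $e^{-\tau\sqrt n/2}$ cancels the leading contribution from $\bE Z^{\theta,\beta,\vf}_{\tau n,\tau n}$, since each Brownian exponential contributes a factor $e^{\tau\beta^2/2}$ per level. A spatial rescaling $s_0\mapsto \sqrt n\,x$ at the boundary then lets one recognize the tilted boundary process $-\beta B(s_0)+\beta\theta s_0$, with $\theta=\Psi_1^{-1}(\beta^2)\sim n^{1/2}$, as converging in law to $\cB(x)$, while $\vf_n(s_0)=\vf(-s_0/\sqrt n)$ becomes $\vf(-x)$. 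A local central limit theorem applied to the $\theta$-tilted O'Connell--Yor transition densities then shows that the $j$-th rescaled chaos kernel converges to a product of standard heat kernels $p_\tau(x_i,x_{i+1})$ along a time-ordered spatial trajectory. This reproduces the Wiener chaos of the Duhamel formulation
\[
 \she^\vf(\tau,0)=\int p_\tau(0,x)\,e^{\vf(x)+\cB(x)}\,dx+\int_0^\tau\!\!\int p_{\tau-s}(0,y)\,\she^\vf(s,y)\,\wh(ds,dy).
\]

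The main obstacle is to justify the exchange of limit and infinite sum. This requires uniform-in-$n$ $L^2$ bounds on the chaos terms producing factorial decay $\|(\text{term})_j\|^2\le C(\tau)^j/j!$ characteristic of the critical scaling; any slower decay would leave the summation over $j$ unjustified. A secondary technical difficulty is that the boundary integration runs over $(-\infty,\tau n]$ rather than a bounded interval, so Gaussian tails in $x$ must be controlled uniformly in $n$. Here the boundedness hypothesis on $\vf$ is essential, letting one reduce to the $\vf=0$ stationary case (where the Burke-type property of \cite{OY} provides exact Brownian increment structure at the boundary) via a Radon--Nikodym factor bounded by $e^{2\|\vf\|_\infty}$, and then import the tightness estimates already available in that stationary setting.
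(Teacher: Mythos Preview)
This theorem is not proved in the present paper. It is quoted from the preprint \cite{MQR-M} (Moreno Flores--Quastel--Remenik) and used here as a black box in the proof of Theorem~\ref{kpzclose}; the paper supplies no argument for it. So there is no ``paper's own proof'' to compare your proposal against.

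Your sketch is a reasonable outline of the chaos-expansion approach in the spirit of \cite{AKQ0,AKQ}, and it is plausible that \cite{MQR-M} proceeds along these lines. But note two issues with your write-up as it stands. First, expanding $\exp[\beta\sum_k B_k(s_{k-1},s_k)]$ as a power series does not directly give iterated It\^o integrals; one has to be careful with the It\^o correction, and the natural route is to write the semi-discrete partition function as a solution of a system of linear SDEs and iterate Duhamel's formula, which produces genuine multiple stochastic integrals with heat-kernel-type kernels. Second, your claim that the renormalization $e^{-\tau\sqrt n/2}$ cancels ``a factor $e^{\tau\beta^2/2}$ per level'' is not quite the right bookkeeping: there are $\tau n$ levels and $\beta^2=n^{-1/2}$, so the total exponent is $\tfrac12\tau n\cdot n^{-1/2}=\tfrac12\tau\sqrt n$, which is consistent, but you should also track the simplex volume $t^{n-1}/(n-1)!$ and the drift $\beta\theta s_0\sim s_0$ coming from $\theta\sim n^{1/2}$; the full centering involves all of these pieces, not just the Brownian variance. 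None of this is fatal, but a complete proof has to get these normalizations exactly right before the local CLT step can be invoked.
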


Combined with Theorem \ref{thm-unscaled} this gives

\begin{theorem}\label{kpzclose}
Let $\vf$ be a bounded function and let $\she^{\vf}$ be the solution of the stochastic heat equation
\eqref{shevf1}--\eqref{shevf2}. Then there exist constants $C_1,\, C_2,\, \tau_0>0$ such that
\begin{eqnarray*}
 C_1\tau^{\frac23} \le \Vvv \log \she^{\vf}(\tau,0) \le C_2\tau^{\frac23},
\end{eqnarray*}
for all $\tau>\tau_0$.
\end{theorem}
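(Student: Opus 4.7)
The plan is to combine the variance bound of Theorem \ref{thm-unscaled} at the critical exponent $\alpha = 1/4$ with the convergence in distribution of Theorem \ref{thm:monster}. Write $M = \|\vf\|_\infty < \infty$. Because $\log \she_n^{\vf}(\tau) = \log Z^{\para,\beta,\vf}_{\tau n,\tau n} - \tfrac12 \tau\sqrt n$ differs from $\log Z^{\para,\beta,\vf}_{\tau n,\tau n}$ only by a deterministic shift, $\Vvv(\log \she_n^{\vf}(\tau)) = \Vvv(\log Z^{\para,\beta,\vf}_{\tau n,\tau n})$. Comparing \eqref{shevf3} with the $\vf = 0$ formula, the integrands differ only by the bounded multiplicative factor $e^{\vf_n(s_0)} \in [e^{-M}, e^M]$, which gives the pointwise sandwich $e^{-M} Z^{\para,\beta}_{\tau n,\tau n} \le Z^{\para,\beta,\vf}_{\tau n,\tau n} \le e^{M} Z^{\para,\beta}_{\tau n,\tau n}$ and hence $|\log Z^{\para,\beta,\vf}_{\tau n,\tau n} - \log Z^{\para,\beta}_{\tau n,\tau n}| \le M$ almost surely. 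The $L^2$-triangle inequality then gives
\begin{equation*}
\bigl|\sqrt{\Vvv\log Z^{\para,\beta,\vf}_{\tau n,\tau n}} - \sqrt{\Vvv\log Z^{\para,\beta}_{\tau n,\tau n}}\bigr| \le M,
\end{equation*}
and Theorem \ref{thm-unscaled} at $\alpha = 1/4$ (so $\tfrac23(1-4\alpha) = 0$) supplies the two-sided bound $C^{-1}\tau^{2/3} \le \Vvv(\log Z^{\para,\beta}_{\tau n,\tau n}) \le C\tau^{2/3}$. Combining yields constants $C_1', C_2', \tau_0 > 0$ such that
\begin{equation*}
C_1' \tau^{2/3} \le \Vvv(\log \she_n^{\vf}(\tau)) \le C_2' \tau^{2/3} \qquad \text{for all } n \ge 1,\ \tau \ge \tau_0.
\end{equation*}

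The second step is to pass to the limit $n \to \infty$. Theorem \ref{thm:monster} and the continuous mapping theorem give $\log \she_n^{\vf}(\tau) \to \log \she^{\vf}(\tau,0)$ in distribution. For the upper bound on $\Vvv(\log \she^{\vf}(\tau,0))$ no further input is needed: using the representation $\Vvv(X) = \tfrac12 \E[(X - X')^2]$ with $X'$ an independent copy, weak convergence of the pair of iid copies together with Fatou's lemma gives $\Vvv(\log \she^{\vf}(\tau,0)) \le \liminf_n \Vvv(\log \she_n^{\vf}(\tau)) \le C_2' \tau^{2/3}$.

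The main obstacle is the matching lower bound, which requires preventing loss of variance in the weak limit and therefore uniform integrability of $\{(\log \she_n^{\vf}(\tau))^2\}_{n \ge 1}$, equivalently a uniform estimate $\sup_n \E|\log \she_n^{\vf}(\tau)|^{2+\delta} < \infty$ for some $\delta > 0$. One route is to refine the coupling arguments that prove Theorem \ref{thm-unscaled} so as to control higher centered moments of $\log Z^{\para,\beta}_{\tau n,\tau n}$. A perhaps cleaner alternative is to exploit the fact that $\log Z^{\para,\beta,\vf}_{\tau n,\tau n}$ is a smooth functional of the independent Brownian motions $B, B_1, \dots, B_n$, so a quenched bound on its Malliavin derivative combined with the Borell-Tsirelson-Sudakov-Ibragimov inequality delivers Gaussian concentration around the mean with variance proxy uniform in $n$. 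Once uniform integrability is in hand, Skorokhod's representation and dominated convergence upgrade the weak limit to $\Vvv(\log \she_n^{\vf}(\tau)) \to \Vvv(\log \she^{\vf}(\tau,0))$, transferring the lower bound $C_1' \tau^{2/3}$ to the limit and completing the proof.
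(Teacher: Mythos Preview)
Your treatment of the sandwich $e^{-M} Z^{\para,\beta}_{\tau n,\tau n}\le Z^{\para,\beta,\vf}_{\tau n,\tau n}\le e^{M} Z^{\para,\beta}_{\tau n,\tau n}$ and the upper bound via Fatou is correct and matches the paper. The gap is in the lower bound: you diagnose the problem as needing uniform integrability of the \emph{squares} $\{(\log \she_n^{\vf}(\tau))^2\}_n$, and then sketch two possible routes (higher centered moments via refined coupling, or Gaussian concentration via a Malliavin derivative bound) without carrying either out. Neither is supplied by the paper, and the concentration route in particular would require a uniform-in-$n$ bound on $\|D\log Z^{\para,\beta}_{\tau n,\tau n}\|_{H}$ that is not established anywhere.

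The point you are missing is that the paper already provides a tool that makes UI of squares unnecessary: Proposition~\ref{prop-lower-bd} gives a \emph{distributional} lower bound
\[
\P\bigl\{\log Z^{\theta}_{n,t}-\E\log Z^{\theta}_{n,t}\ge \delta_1 n^{1/3}\theta^{-2/3}\bigr\}\ge \delta_2,
\]
which after rescaling and the sandwich becomes $\P\{\log \she_n^{\vf}(\tau)-\E\log \she_n^{\vf}(\tau)\ge c\tau^{1/3}-2M\}\ge \delta_2$. Your uniform variance bound already yields uniform integrability of $\log\she_n^{\vf}(\tau)$ (first power only), hence convergence of means, so the centered variables converge weakly and the tail inequality passes to the limit:
\[
\P\bigl\{\log \she^{\vf}(\tau,0)-\E\log \she^{\vf}(\tau,0)\ge c\tau^{1/3}-2M\bigr\}\ge \delta_2.
\]
For $\tau$ large this immediately gives $\Vvv\log\she^{\vf}(\tau,0)\ge \delta_2(c\tau^{1/3}-2M)^2\ge C_1\tau^{2/3}$. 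This is exactly what the paper does, and it sidesteps the second-moment uniform integrability entirely.
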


We note that our results for the path of the stationary polymer could in principle have a meaning in the context of the SHE. In \cite{AKQ2}, $\she$ is identified as the partition function of a continuum directed polymer. Theorem \ref{thm:statpath} strongly suggests that the fluctuations of the path of the continuum polymer are of order $t^{2/3}$, in agreement with the KPZ scaling.


\section{Proofs for the semi-discrete polymer model}\label{pf-dpbe}
The proofs of our Theorems \ref{thm:freeZ}, \ref{thm:path-ptp}, \ref{thm-unscaled} and \ref{thm:statpath} are given in this section. We first prove the results for the stationary model. The results for the point-to-point model are then done by comparison.

%
%
%

\subsection{Preliminaries}\label{pf-dpbe-pre}
We recall some   facts from  \cite{SV}. Throughout this section we take  $\beta=1$ as we can  reduce the situation to this by Brownian scaling (see Section \ref{pf-dpbe-resc}). The stationary model can be  written as
\begin{eqnarray}\label{ZB2}
 Z^{\theta}_{n,t}=\int^t_0 e^{-B(s)+\theta s}Z_{(1,n),(s,t)}\, ds +
\sum^n_{j=1} \left(\;
\prod^j_{k=1} e^{r_k(0)}\right)Z_{(j,n),(0,t)} 
\end{eqnarray}
where the $r_k$ processes  are defined recursively in (\ref{sc-r}). 
Recall that the random variables $r_k(0)$ are i.i.d.~and $e^{-r_k(0)}$ has Gamma$(\theta)$ distribution. 

The processes $r_k$ and $Y_k$ give  space and time increments of the
partition function:
\be\label{Y-increment}\begin{aligned} 
 r_k(t)&=\log Z^{\theta}_{k,t}-\log Z^{\theta}_{k-1,t}, \\ 
 Y_k(s,t)&=Y_k(t)-Y_k(s)= \theta (t-s) - \log Z_{k,t} + \log Z_{k,s}.
\end{aligned}\ee
The appearance of polygamma functions in our results
is natural because of the identities
\be \E [r_k(t)]=-\Psi_0(\para) \quad\text{and}\quad  \Vvv [r_k(t)]=-\Psi_1(\para).
\label{rk}\ee
(See Section \ref{gtb}.)

From \eqref{Y-increment}, (\ref{rk}) and $Z^\theta_{0,t}=\exp(-B(t)+\theta t)$ one immediately gets
\be
\label{ev-ident}
 \E (\log Z_{n,t}^{\para})=-n \digamf(\para)+\para t. 
 \ee
The variance of $\log Z_{n,t}^{\para}$ was computed in Theorem 3.6 in \cite{SV}: 
\be
 \Vvv( \log Z^{\theta}_{n,t})=n \Psi_1(\theta)-t + 2 E^{\theta}_{n,t}(\sigma^+_0)=t-n \Psi_1(\theta) + 2 E^{\theta}_{n,t}(\sigma^-_0)=E^{\theta}_{n,t}\left|\sigma_0 \right|.
\label{var-ident}\ee
We will also need the following lemma from \cite{SV}: 
 \begin{lemma}\cite[Lemma 4.3]{SV}\label{4.1lem}
For  $\theta,  \lambda>0$,   
\[
\bigl\lvert \Vvv(\log Z^{\lambda}_{n,t}) - \Vvv(\log Z^{\para}_{n,t}) \bigr\rvert 
\le  n\abs{\trigamf(\lambda)-\trigamf(\para)}.  
\]
\end{lemma}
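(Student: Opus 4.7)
The plan is to combine the two equivalent forms of the variance identity \eqref{var-ident} with a stochastic monotonicity for the boundary jump $\sigma_0$ under the quenched measure as a function of $\theta$. Write $V(\theta):=\Vvv(\log Z^\theta_{n,t})$ and assume without loss of generality that $\lambda > \theta$. Differencing \eqref{var-ident} at the two parameters yields
\begin{align*}
V(\lambda)-V(\theta) &= n[\trigamf(\lambda)-\trigamf(\theta)] + 2\bigl[E^\lambda_{n,t}(\sigma_0^+) - E^\theta_{n,t}(\sigma_0^+)\bigr],\\
V(\lambda)-V(\theta) &= -n[\trigamf(\lambda)-\trigamf(\theta)] + 2\bigl[E^\lambda_{n,t}(\sigma_0^-) - E^\theta_{n,t}(\sigma_0^-)\bigr],
\end{align*}
depending on whether the first or the second form in \eqref{var-ident} is used at both endpoints.

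The key step is to sign the second bracket in each equation. From \eqref{Qdef2} with $\beta=1$, the $\theta$-dependence of $Q^\theta_{n,t}$ sits entirely in the factor $e^{\theta s_0}$ of the boundary weight, so for each fixed environment $\omega$ the marginal density of $\sigma_0$ under $Q^\theta_{n,t}$ has the form $c(\theta,\omega)\,e^{\theta s_0}\,g_\omega(s_0)$ with $g_\omega$ independent of $\theta$. The likelihood ratio between the $\sigma_0$-marginals at $\lambda$ and at $\theta$ is therefore proportional to $e^{(\lambda-\theta)s_0}$, a nondecreasing function of $s_0$. The standard monotone-likelihood-ratio argument then gives quenched stochastic domination: for every nondecreasing $f$,
\[
E^{Q^\lambda_{n,t}}[f(\sigma_0)] \;\ge\; E^{Q^\theta_{n,t}}[f(\sigma_0)] \qquad\text{for each }\omega.
\]
Averaging in $\omega$ and applying this to $s\mapsto s^+$ and $s\mapsto -s^-$ yields
\[
E^\lambda_{n,t}(\sigma_0^+) \;\ge\; E^\theta_{n,t}(\sigma_0^+), \qquad E^\lambda_{n,t}(\sigma_0^-) \;\le\; E^\theta_{n,t}(\sigma_0^-).
\]

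Since $\trigamf$ is strictly decreasing on $(0,\infty)$, $\trigamf(\lambda)-\trigamf(\theta)<0$. Substituting the two monotonicities into the two displayed equations for $V(\lambda)-V(\theta)$ produces the matching lower bound $V(\lambda)-V(\theta)\ge n[\trigamf(\lambda)-\trigamf(\theta)]=-n\abs{\trigamf(\lambda)-\trigamf(\theta)}$ and upper bound $V(\lambda)-V(\theta)\le -n[\trigamf(\lambda)-\trigamf(\theta)]=n\abs{\trigamf(\lambda)-\trigamf(\theta)}$, which together give the claim.

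The only real technical point is the quenched stochastic monotonicity in $\theta$; once the $\theta$-dependence is isolated in the exponential boundary tilt this is a routine likelihood-ratio calculation and needs no input beyond the structure of the measure. Any integrability concerns (finiteness of $E(\sigma_0^\pm)$ and the ability to split $\sigma_0=\sigma_0^+-\sigma_0^-$ in the identity) are already subsumed by the finiteness of the variance used to derive \eqref{var-ident}.
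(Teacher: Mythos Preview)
Your argument is correct. The paper does not prove this lemma; it simply quotes it from \cite[Lemma~4.3]{SV}, so there is no in-paper proof to compare against. Your route---difference the two forms of the variance identity \eqref{var-ident} and sign the resulting $E^{\cdot}_{n,t}(\sigma_0^{\pm})$ differences via the monotone likelihood ratio in $\theta$---is exactly the natural one, and it is essentially the argument used in \cite{SV}. The MLR step is clean: from \eqref{Qdef2} with $\beta=1$ the $\sigma_0$-marginal under $Q^\theta_{n,t}$ is $(Z^\theta_{n,t})^{-1}e^{-B(s_0)+\theta s_0}Z_{(1,n),(s_0,t)}$, so the ratio of the $\lambda$- and $\theta$-densities is proportional to $e^{(\lambda-\theta)s_0}$ as you say, and the quenched stochastic domination of $\sigma_0$ follows. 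The finiteness of the annealed moments $E^{\theta}_{n,t}(\sigma_0^{\pm})$ is indeed guaranteed by \eqref{var-ident}, since $\Vvv(\log Z^\theta_{n,t})$ is finite by \eqref{Zr-ident} and the Gamma moments.
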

Finally, we note a shift-invariance of the stationary model (Remark 3.1
of \cite{SV}): 
\be
E^{Q^\para_{n,t}}f(\sigma_0,\sigma_1,\dotsc,\sigma_{n-1}) \eqd 
E^{Q^\para_{n,0}}f(t+\sigma_0,t+\sigma_1,\dotsc,t+\sigma_{n-1}).
\label{altQ2} \ee
This follows from the stationarity of $Z_n^\para(t) \exp(B(t)-\para t)$ by observing that the density of $(\sigma_0,\dots, \sigma_{n-1})$ under $Q^{\theta}_{n,t}$ can also be
written as
\be\begin{aligned}
\frac1{\widehat Z_n^\para(t)}
&\exp\bigl[
\widehat B(s_0,t)+\widehat B_1(s_0,s_1)+\dots+\widehat B_n(s_{n-1},t)
\bigr] \times \ind\{s_0<\dotsm<s_{n-1}<t\}
\end{aligned}\label{altQ}\ee
where $\widehat B(u)=B(u)-\para u/2$ (and similarly for $\widehat B_k$) 
and $\widehat Z_n^\para(t)=Z_n^\para(t) \exp(B(t)-\para t)$. 

Using the same ideas one can also show a shift-invariance property in $n$ (see the proof of Theorem 6.1 in \cite{SV}): 
\begin{align}\label{altQ3}
E^{Q^\para_{n,t}}f(\sigma_k,\sigma_{k+1},\dotsc,\sigma_{n-1}) \eqd 
E^{Q^\para_{n-k,t}}f(\sigma_0,\sigma_1,\dotsc,\sigma_{n-k-1}).
\end{align}

\subsection{Rescaled models and characteristic direction}\label{pf-dpbe-resc}
For the proofs we  scale $\beta$ away via the following identity in law which is obtained by Brownian scaling:   
\begin{eqnarray}\label{bscaling}
 Z_{(1,n),(0,t)}(\beta)\overset{d}= {\beta^{-2(n-1)}} Z_{(1,n),(0,\beta^2 t)}(1).
\end{eqnarray}
We drop  $\beta=1$ from the notation and write 
$Z_{(1,n),(0, t)}=Z_{(1,n),(0,t)}(1)$. 
The regime $\beta  = \betaa n^{-\alpha}$ corresponds to studying 
$Z_{(1,n),(0,\betaa^2n^{1-2\alpha})}$.        
Similarly we  scale $\beta$ away from the stationary partition function
\eqref{Zdef2.1}:  
\be
\label{statscaling}
Z^{\para,\beta}_{n,t}\eqd   \beta^{-2n} Z^{\beta^{-1}\para,1}_{n,\,\beta^2 t}. 
\ee 
 As we take $(n,t)$ to infinity, we have to follow approximately 
 a  characteristic direction determined by   $\theta$. The characteristic direction is found  by minimizing the right-hand side
of  \eqref{ev-ident} with respect to $\theta$, or equivalently, by arranging the 
cancellation of the first two terms on the right of 
\eqref{var-ident}. The following condition on the  triples $(n,t,\theta)$ expresses the fact that $(n,t)$ is close to the characteristic direction:   
\be   \abs{n\Psi_1(\theta)-t\,} \le \kappa n^{2/3}\theta^{-4/3} \qquad \textup{with  a fixed constant $\kappa\ge 0$}.
\label{nt-hyp}\ee

By the scaling relation \eqref{statscaling}, we can see that the choice of parameters in Theorem \ref{thm-unscaled} corresponds to the characteristic direction.


\subsection{Upper bounds for the stationary model}\label{pf-dpbe-ub-stat}

The main tool for our upper bounds is the following lemma. The proof of the upper bound in Theorem \ref{thm-unscaled} will follow by a particular choice of the parameters and can be found at the end of this section.

\begin{lemma}\label{main-upper-bound}   
Fix $\theta_0>0$ and $\kappa\ge 0$. 
Assume that $\theta>0$, $n\in\bN$ and $t>0$  satisfy \eqref{nt-hyp}  and $\theta_0\le \theta\le \theta_0^{-1} \sqrt{n}$. 
 Then there are constants $\delta>0$ and $c<\infty$ 
 that depend only on $\theta_0$ 
such that, for all 
$ n \theta^{-1}\ge u\ge 2 \kappa n^{2/3}\theta^{-4/3}$, we have
\begin{align}
\bP\big\{ Q^{\theta}_{n,t}(\sigma^{+}_0 \geq u) \geq e^{-\delta \theta^2u^2n^{-1}}\big\}
&\le 
c(1+\kappa) \frac{n^{8/3}}{\theta^{16/3}u^4}  
+c  \frac{n^{2}}{\theta^4u^3}  , 
\label{aa-1}\\
 E^{\theta}_{n,t}(\sigma^{+}_0)& \le  c(1+\kappa) \frac{n^{2/3}}{\theta^{4/3}} \label{aa-2},\\
\text{and} \qquad   P^{\theta}_{n,t}\bigl\{\sigma^+_0\geq b n^{\frac23} \theta^{-\frac43}\bigr\}&\leq  
c(1+\kappa) b^{-3} 
\qquad\text{for $b\ge (2\kappa)\vee 1$.}   \label{aa-3}
\end{align}
The same bounds hold for $\sigma_0^{-}$, and for $u\ge n \theta^{-1}$ we also have
\begin{align}
\label{aa-3.5}
\bP\big\{ Q^{\theta}_{n,t}(\sigma^{-}_0 \geq u) \geq e^{-\delta \theta u}\big\}
&\le 
2 e^{-c \theta u}. 
\end{align}
Finally,  we also have
\begin{align}
\label{aa-4}
\Vvv (\log Z_{n,t}^\theta)\le c(1+\kappa) n^{2/3} \theta^{-4/3}. 
\end{align}
\end{lemma}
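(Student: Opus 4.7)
The plan is to prove the five inequalities in the coupled order $\eqref{aa-1} \Rightarrow \eqref{aa-3} \Rightarrow \eqref{aa-2} \Rightarrow \eqref{aa-4}$, with \eqref{aa-3.5} obtained by a parallel argument in a different regime, and to close the resulting circularity by a self-bootstrap. The single tool used throughout is the perturbation identity
$$\frac{Z^\lambda_{n,t}}{Z^\theta_{n,t}} \;=\; E^{Q^\theta_{n,t}}\bigl[e^{(\lambda-\theta)\sigma_0}\bigr],$$
obtained by inserting the factor $e^{(\lambda-\theta)s_0}$ into the integral defining $Z^\theta_{n,t}$. Specialized to $\lambda > \theta$ and the event $\{\sigma_0 \geq u\}$, this gives $\log Z^\lambda_{n,t} - \log Z^\theta_{n,t} \geq \log Q^\theta_{n,t}(\sigma_0^+ \geq u) + (\lambda-\theta)u$; the analogous statement with $\lambda < \theta$ handles $\sigma_0^-$.

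For the main bound \eqref{aa-1}, I would take $p = e^{-\delta\theta^2 u^2/n}$ and $\Delta := \lambda - \theta = 2\delta\theta^2 u/n$. By \eqref{ev-ident}, the Taylor expansion $\Psi_0(\lambda) - \Psi_0(\theta) = \Psi_1(\theta)\Delta - \tfrac12|\Psi_2(\theta)|\Delta^2 + O(\Delta^3)$, the characteristic hypothesis \eqref{nt-hyp}, and the lower bound $u \geq 2\kappa n^{2/3}\theta^{-4/3}$, both $\bE[\log Z^\lambda_{n,t} - \log Z^\theta_{n,t}]$ and the higher-order Taylor terms are bounded in absolute value by $\delta\theta^2 u^2/(2n)$ for small enough $\delta$. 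Thus the event in \eqref{aa-1} implies the centered deviation $\log Z^\lambda_{n,t} - \log Z^\theta_{n,t} - \bE[\log Z^\lambda_{n,t} - \log Z^\theta_{n,t}] \geq c\delta\theta^2 u^2/n$, and Chebyshev's inequality together with the comparison Lemma \ref{4.1lem} and $\Vvv(X-Y) \leq 2\Vvv(X) + 2\Vvv(Y)$ gives
$$\bP\bigl\{Q^\theta_{n,t}(\sigma_0^+ \geq u) \geq e^{-\delta\theta^2 u^2/n}\bigr\} \;\leq\; \frac{4\,\Vvv(\log Z^\theta_{n,t}) + 2n\,|\Psi_1(\lambda) - \Psi_1(\theta)|}{(c\delta\theta^2 u^2/n)^2}.$$
Since $|\Psi_2(\theta)| \asymp \theta^{-2}$ in the relevant range, $2n|\Psi_1(\lambda) - \Psi_1(\theta)|$ is of order $\delta u$, contributing the $n^2/(\theta^4 u^3)$ term of \eqref{aa-1}; the variance summand produces the $n^{8/3}/(\theta^{16/3} u^4)$ term, once \eqref{aa-4} is available.

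The remaining bounds follow by routine steps. Bound \eqref{aa-3} is the union bound $P^\theta_{n,t}(\sigma_0^+ \geq u) \leq e^{-\delta\theta^2 u^2/n} + \bP\{Q^\theta_{n,t}(\sigma_0^+ \geq u) \geq e^{-\delta\theta^2 u^2/n}\}$ evaluated at $u = bn^{2/3}\theta^{-4/3}$. Bound \eqref{aa-2} comes by integrating \eqref{aa-3} over $u$, splitting at $u_0 = 2\kappa n^{2/3}\theta^{-4/3}$. Bound \eqref{aa-4} is immediate from the identity $\Vvv(\log Z^\theta_{n,t}) = E^\theta_{n,t}|\sigma_0|$ in \eqref{var-ident} and the $\sigma_0^\pm$ versions of \eqref{aa-2}. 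The heavy-tail bound \eqref{aa-3.5} uses the same scheme but with $\Delta$ of order $\theta$, since for $u \geq n\theta^{-1}$ the optimal perturbation leaves the small-$\Delta$ regime, and a Chernoff-type estimate based on exponential moments of $\log Z^\theta_{n,t}$ then gives the exponential rate.

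The main obstacle is the apparent circularity: the variance factor in the first term of \eqref{aa-1} requires \eqref{aa-4}, which is derived from \eqref{aa-1} via \eqref{aa-3} and \eqref{aa-2}. A self-bootstrap closes this loop. Setting $V := \Vvv(\log Z^\theta_{n,t})$ and tracking constants through the chain produces an inequality of the form $V \leq A + V/K$ with $A = O((1+\kappa) n^{2/3}\theta^{-4/3})$ and $K$ large provided $\theta$ is bounded below by a constant depending on $\theta_0$ and $\kappa$; this solves to $V \lesssim A$. A crude a priori bound on $V$, for instance via Gaussian concentration for the Brownian environment, seeds the iteration for small $\kappa$. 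The polygamma asymptotics collected in Section \ref{gtb} are invoked throughout to verify that the constants behave correctly over the range $\theta_0 \leq \theta \leq \theta_0^{-1}\sqrt{n}$.
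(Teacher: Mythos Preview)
Your approach is correct and matches the paper's: the parameter-shift inequality $Q^\theta_{n,t}(\sigma_0^+ \geq u) \leq e^{(\theta-\lambda)u} Z^\lambda_{n,t}/Z^\theta_{n,t}$, Chebyshev combined with Lemma~\ref{4.1lem}, integration of the resulting tail bound, and a bootstrap to close the circularity. The only organizational difference is where the variance identity \eqref{var-ident} enters. The paper applies it immediately after the Chebyshev step, replacing $\Vvv(\log Z^\theta_{n,t})$ by $(n\Psi_1(\theta)-t)+2E^\theta_{n,t}(\sigma_0^+)\le u/2+2E^\theta_{n,t}(\sigma_0^+)$, and then bootstraps directly in the scalar $E^\theta_{n,t}(\sigma_0^+)$: integrating produces an inequality with $E^\theta_{n,t}(\sigma_0^+)$ on both sides, and a suitable choice of the cutoff $u_0$ makes the right-hand coefficient strictly less than one. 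You instead carry $V=\Vvv(\log Z^\theta_{n,t})$ through the entire chain and close the loop in $V$ at the end. Both versions work and yield the same constants up to harmless factors.

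One minor correction: the ``crude a priori bound on $V$'' you mention to seed the iteration is not needed. The inequality $V\le A+V/K$ with $K>1$ solves directly to $V\le A(1-1/K)^{-1}$ as soon as $V<\infty$, and that finiteness is immediate from $\sigma_0^+\le t$ together with the first line of \eqref{var-ident}, which gives $V=n\Psi_1(\theta)-t+2E^\theta_{n,t}(\sigma_0^+)\le n\Psi_1(\theta)+t$. Likewise, your clause ``$K$ large provided $\theta$ is bounded below by a constant depending on $\theta_0$ and $\kappa$'' is slightly off: $K$ is made large simply by choosing the integration cutoff $u_0$ to be a sufficiently large multiple of $n^{2/3}\theta^{-4/3}$, with no further constraint on $\theta$ beyond $\theta\ge\theta_0$.
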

\begin{proof} We introduce $a=\delta  \theta^2u^2n^{-1}$. We fix the positive parameter $r$ (its value will be determined later), 
and  set  $\lambda= \theta+{r u\theta^2}{n}^{-1}$. 
From the definition of the path measure, we have 
\begin{align}
Q^\para_{n,t}(\sigma_0^+\ge u)&=\frac1{Z^{\para}_{n,t} } 
\int\limits_{u<s_{0}<\dotsm<s_{n-1}<t} 
\exp\bigl[ -B(s_0)+\para s_0 +  B_1(s_0,s_1) 
+\dotsm + B_n(s_{n-1},t)\bigr] \,ds_{0,n-1}\nn\\[4pt]
&\le\frac1{Z^{\para}_{n,t} } \label{ineq-sigma0}
\int\limits_{u<s_{0}<\dotsm<s_{n-1}<t}  \hskip-20pt e^{(\para-\lambda)u}
  \exp\bigl[ -B(s_0)+\lambda s_0 +  B_1(s_0,s_1) 
+\dotsm + B_n(s_{n-1},t)\bigr] \,ds_{0,n-1}\\[4pt]
&\le \frac{Z^{\lambda}_{n,t}}{Z^\para_{n,t} }e^{(\para-\lambda) u}.\nn
\end{align}
Consequently,
\begin{align}
\bP\big\{ Q^{\theta}_{n,t}(\sigma^+_0 \geq u) \geq e^{-a}\big\}\nn&\leq  \bP\big\{ \log Z^{\lambda}_{n,t}-\log Z^{\theta}_{n,t}\geq (\lambda-\theta)u-a\big\} \nn\\[5pt]
&= \bP\big\{ \overline{\log Z^{\lambda}_{n,t}}-\overline{\log Z^{\theta}_{n,t}}\geq
n(\Psi_0(\lambda)-\Psi_0(\theta))-t(\lambda-\theta)+(\lambda-\theta)u-a\big\},
\label{line-a-5}\end{align}
where   $\overline  X=X-EX$ denotes the centering of the random variable $X$.    Because of (\ref{ev-ident}) we have $\overline{\log
Z^{\lambda}_{n,t}}=\log Z^{\lambda}_{n,t}+n \Psi_0(\theta)-\theta t$.

By the monotonicity of $\Psi_2(z)=\Psi_0''(z)$ (see \eqref{Psi-2}) 
for any $\lambda>\theta>0$  
\[  0\ge  \Psi_0(\lambda)-\Psi_0(\theta)- \Psi_1(\theta)(\lambda - \theta)=\int_\theta^\lambda \int_\theta^y \Psi_2(z) dz dy
\ge -\tfrac12 \abs{\Psi_2(\theta)} (\lambda - \theta)^2. \]

Assumptions  (\ref{nt-hyp})  and $ u\ge 2 \kappa n^{2/3}\theta^{-4/3}$  imply
\be \abs{n\Psi_1(\theta)-t\,} \le u/2 \label{line-a-5.1}\ee
and so the  right-hand side inside the probability (\ref{line-a-5})  develops as follows: 
\begin{align}\nn
&n(\Psi_0(\lambda)-\Psi_0(\theta))-t(\lambda-\theta)+(\lambda-\theta)u-a\\
&\nn\hskip70pt =n \bigl( \Psi_0(\lambda)-\Psi_0(\theta)-\Psi_1(\theta)(\lambda - \theta)\bigr)+(n
\Psi_1(\theta)-t)(\lambda-\theta)+u(\lambda-\theta)-a\\
&\hskip70pt \ge -\frac{n}2\abs{\Psi_2(\theta)} (\lambda - \theta)^2
+\frac{u}2(\lambda-\theta)-a\ge \left(-\frac{r^2 c_0}2+\frac{r}{2}-\delta\right)\theta^2u^2n^{-1}\nn\\
&\hskip70pt \ge   \delta \theta^2u^2n^{-1}. \nn  \label{line-a-5.2}
\end{align}
Above we introduced   
\be c_0=\sup_{x\ge \theta_0 }|\Psi_2(x)|x^2<\infty \label{c0}\ee
 (which is finite by \eqref{Psi-2} and \eqref{Psi-3}) and then chose 
$r=(2 c_0)^{-1}$ and $ \delta=c_0^{-1}/16$.

In the following   $c$ denotes 
a constant that  depends only on  $\theta_0$, but may change from line to line. 
From line \eqref{line-a-5}, using Lemma \ref{4.1lem} we get
\be\begin{aligned}
\bP\big\{ Q^{\theta}_{n,t}(\sigma^+_0 \geq u) \geq e^{-a}\big\}&\leq \bP\big\{  \overline{\log Z^{\lambda}_{n,t}}-\overline{\log Z^{\theta}_{n,t}}\geq \delta\theta^2u^2n^{-1} \big\}\\
&\leq c \frac{n^{2}}{\theta^4u^4} \Vvv\bigl[ \log Z^{\lambda}_{n,t}-\log Z^{\theta}_{n,t}\bigr]\\
&\leq c \frac{n^{2}}{\theta^4u^4} \bigl( \,
\Vvv\bigl[\log Z^{\theta}_{n,t}\bigr] + n |\Psi_1(\lambda)-\Psi_1(\theta)|\,\bigr)\\
&\leq c \frac{n^{2}}{\theta^4u^4}  \bigl(  E^{\theta}_{n,t}(\sigma^+_0) +
u \bigr). 
\end{aligned}\label{line-a-5.3}\ee
Above we used \eqref{var-ident}, \eqref{line-a-5.1}, and the following estimate: 
  \[ |\Psi_1(\lambda)-\Psi_1(\theta)|\leq |\Psi_2(\theta)|(\lambda-\theta) 
\le c_0 \theta^{-2} (\lambda-\theta)= c_0 ru/n=u/(2n).\]
Let $u_0\ge 2\kappa n^{2/3}\theta^{-4/3}$.  
\begin{align*}
 E^{\theta}_{n,t}(\sigma^+_0) &\leq u_0 + \int^t_{u_0} du\, 
P^{\theta}_{n,t}\left[ \sigma^+_0 \geq u\right]\\
 &\leq u_0 + \int^t_{u_0} du\, \Bigl\{ 
\int^1_{e^{-a}} dr\, \bP\bigl[ Q^{\theta}_{n,t}(\sigma^+_0 \geq u) \ge r \bigr]
+ e^{-a} \Bigr\} \\
 &\leq u_0 + \frac{cn^2}{\theta^4}  \int^t_{u_0} du\, 
  \left( \frac{ E^{\theta}_{n,t}(\sigma^+_0)}{u^4} + \frac{1}{u^ 3} \right)  + 
\int^t_{u_0} e^{-\delta \theta^2 u^2 /n }\,du \\
 &\leq u_0 + \frac{cn^2}{\theta^4u_0^3} E^{\theta}_{n,t}(\sigma^+_0) 
+  \frac{cn^2}{\theta^4u_0^2}
 +  \frac{\delta^{-1} n}{ 2\theta^2u_0} e^{-\delta  \theta^2 u_0^2 /n }. 
\end{align*} 
The last term comes from $\int_m^\infty e^{-x^2}\,dx\le (2m)^{-1}e^{-m^2}$ for $m>0$. 
Now choose   $u_0=  2(1+c+\kappa) n^{2/3}\theta^{-4/3}$.
The inequality above can be rearranged
to give 
\be \begin{aligned}
 E^{\theta}_{n,t}(\sigma^+_0) &\le   (c+4\kappa) \frac{n^{2/3}}{\theta^{4/3}} 
 +  \frac{cn^{1/3}}{\theta^{2/3}} \exp(-\delta n^{1/3}  \theta^{-2/3}   )\\
 &\le   c(1+\kappa) \frac{n^{2/3}}{\theta^{4/3}} .
\end{aligned}\label{line-a-6}  
\ee 
Above, $c$ has been redefined but still depends only on $\theta_0$. This proves (\ref{aa-2}) for $\sigma_0^+$. 
Substitute this back up in \eqref{line-a-5.3}  to get 
\be
\bP\big\{ Q^{\theta}_{n,t}(\sigma^+_0 \geq u) \geq e^{-\delta \theta^2u^2n^{-1}}\big\}
\le 
c(1+\kappa) \frac{n^{8/3}}{\theta^{16/3}u^4}  
+c(1+u)  \frac{n^{2}}{\theta^4u^4}      
\label{line-a-7}\ee
which proves (\ref{aa-1}) as $\theta\le \theta_0 \sqrt{n}$ . To prove (\ref{aa-3}) 
apply (\ref{aa-1}) with $u=b n^{2/3} \theta^{-4/3}$, and use $b\ge (2\kappa)\vee 1$:
\begin{align}
  P^{\theta}_{n,t}\bigl\{\sigma^+_0\geq b n^{\frac23} \theta^{-\frac43}\bigr\}
  &\le e^{-\delta \theta^2u^2n^{-1}}+\bP\big\{ Q^{\theta}_{n,t}(\sigma^+_0 \geq b n^{\frac23} \theta^{-\frac43}) \geq e^{-\delta \theta^2u^2n^{-1}}\big\}\nn\\\label{aaa1}
&\le e^{-\delta \theta^{-2/3}n^{1/3}b^2}+  c(1+\kappa) b^{-4} +cb^{-3} \\
&\le  c(1+\kappa) b^{-3}   .\nn 
\end{align}
The proof for $\sigma^-_0$ is similar, but we need some modifications. We take $\lambda=\theta-{r u\theta^2}{n}^{-1}$. Note that by choosing $r<1/2$ and using $n \theta^{-1} \ge u$ we have $\lambda>\theta/2$. 
Now we use the inequality
\[
Q^{\theta}_{n,t}(\sigma^-_0 \geq u) \leq
\frac{Z^{\lambda}_{n,t}}{Z^{\theta}_{n,t}}e^{-(\theta-\lambda)u},
\]
instead of (\ref{ineq-sigma0}), its proof being similar. Bound (\ref{aa-1}) now follows exactly the same way as for $\sigma_0^+$. 

In order to get the bound (\ref{aa-3.5}) for $u\ge n \theta^{-1}$ we set $\lambda=r \theta$ with $r$ to be specified later and proceed with the proof exactly the same way as in the previous case. We have
\begin{align*}
\bP\big\{ Q^{\theta}_{n,t}(\sigma^{-}_0 \geq u) \geq e^{-\delta \theta u}\big\}
&\\
&\hskip-40pt \le \bP\big\{ \overline{\log Z^{\lambda}_{n,t}}-\overline{\log Z^{\theta}_{n,t}}\geq
n(\Psi_0(\lambda)-\Psi_0(\theta))-t(\lambda-\theta)+(\theta-\lambda)u-\delta \theta u\big\}.
\end{align*}
and
\begin{align*}
&n(\Psi_0(\lambda)-\Psi_0(\theta))-t(\lambda-\theta)+(\lambda-\theta)u-\delta \theta u \\
&\nn\hskip70pt =n \bigl( \Psi_0(\lambda)-\Psi_0(\theta)-\Psi_1(\theta)(\lambda - \theta)\bigr)+(n
\Psi_1(\theta)-t)(\lambda-\theta)+u(\theta-\lambda)-\delta \theta u\\
&\hskip70pt \ge -\frac{n}2\abs{\Psi_2(\lambda)} (\lambda - \theta)^2
+\frac{u}2(\theta-\lambda)-\delta \theta u\ge
-n C(\theta_0) r^2+\frac14 r u \theta-\delta \theta u
\ge
 c_0  \theta u \end{align*}
with a fixed positive $c_0$. In order to get the last bound we need to choose $r$ and $\delta$  small enough in terms of $c(\theta_0)$.
This gives
\begin{align*}
\bP\big\{ Q^{\theta}_{n,t}(\sigma^{+}_0 \geq u) \geq e^{-\delta \theta u}\big\}&\le
 \bP\big\{ 
\overline{\log Z^{\lambda}_{n,t}}-\overline{\log Z^{\theta}_{n,t}}\ge c_0 \theta u\big\}\\
&\le
  \bP\big\{ 
\overline{\log Z^{\lambda}_{n,t}}+B(t)\ge c_0 \theta u/2\big\}+ 
\bP\big\{ 
\overline{\log Z^{\theta}_{n,t}}+B(t)\ge c_0 \theta u/2\big\}
\end{align*}
By (\ref{Zr-ident})  $\log Z_{n,t}^\theta+B(t)-\theta t$ is the sum of $n$ i.i.d.~Gamma($\theta$) variables. Using the exponential Markov inequality (optimizing in the extra parameter) we get for any $y>0$ that
\[
\bP(\overline{\log Z^{\theta}_{n,t}}+B(t) \ge n \theta y)\le e^{- n \theta (y-\log(1+y))}.
\]
We have $y-\log(1+y)>K(C) y$ if $y>C$. Since $u>C n \theta^{-1}>C' n$ we have the bound $c_0  u n^{-1}/2>C_1(c_0,C')$ and from this we get
\[
\bP(\overline{\log Z^{\theta}_{n,t}}+B(t) \ge  \theta c_0 u/2)\le e^{- c_1 \theta u}.
\]
and a similar bound for the $\lambda$ term. This completes the proof of (\ref{aa-3.5}) for $\sigma_0^-$. 

To prove (\ref{aa-2}) for $\sigma_0^{-}$ we use $\E_{n,t}^\theta \sigma_0^+-\sigma_0^-=t-n \Psi_1(\theta)$ and the fact that we already have (\ref{aa-2}) for $\sigma_0^{+}$.  To prove (\ref{aa-3}) for $\sigma_0^-$ we can follow  (\ref{aaa1}) in the case $b n^{\tfrac23} \theta^{-\tfrac43}\le n \theta^{-1}$ and use the bound  (\ref{aa-3.5}) with a similar argument if $b n^{\tfrac23} \theta^{-\tfrac43}\ge n \theta^{-1}$.

Finally, bound (\ref{aa-4}) follows from (\ref{var-ident}),
\eqref{nt-hyp}  and (\ref{aa-2}). 
\end{proof}


\begin{proof}[Proof of  the upper bound in Theorem \ref{thm-unscaled}]
Introduce variables 
\be \text{$\tilde n=\tau n$,  $t=\tau \beta_0^2 n^{1-2\alpha}$ and $\tilde \theta=\Psi_1^{-1}(\beta_0^2 n^{-2\alpha})$. }
\label{var7}\ee
By the scaling identity (\ref{statscaling}),  $\Vvv (\log Z_{\tau n,\tau n}^{\para, \beta})=\Vvv (\log Z_{\tilde n,t}^{\tilde \para})$. Condition  (\ref{nt-hyp}) is satisfied by $(\tilde n, t,  \tilde \theta)$ with $\kappa=0$, 
$\tilde \theta\ge \Psi_1^{-1}(\beta_0^2)>0$ and $\tilde \theta\le C \beta_0^{-2} n^{2\alpha}\le C'  \sqrt{\tilde n}$, as long as $\tau\geq \tau_0$ for a constant $\tau_0=\tau_0(\beta_0)$. 
This means that we may apply Lemma \ref{main-upper-bound} with $(\tilde n, t, \tilde \theta)$. The bound (\ref{aa-4})  gives
\[
 \Vvv\bigl[ \log Z^{\tilde \theta}_{\tilde n, t}\bigr]\le c\frac{\tilde n^{2/3}}{\tilde \theta^{4/3}}\le C \tau^{\frac23} n^{\frac23(1-4\alpha)}
\]
where $C$ depends only on $\beta_0$.
\end{proof}

\subsection{Lower bound for the stationary model}\label{pf-dpbe-lb-stat}
In this section we prove the lower bound in Theorem \ref{thm-unscaled}. Again, the proof will follow by a particular choice of the parameters in the next proposition and can be found at the end of the section. 

\begin{proposition}\label{prop-lower-bd}
Let $\theta_0>0$ and $\kappa\ge 0$.   
There are positive constants $\delta_1, \delta_2, n_0$ 
that depend on $(\kappa, \theta_0)$ such that 
   \begin{align}
   \label{Prop4LBdist}
\P\bigl\{ \,{\log Z_{n,t}^\theta}-\E ({\log Z_{n,t}^\theta})\ge \delta_1
n^{\frac13}\theta^{-\frac23}\bigr\}\geq \delta_2  
 \end{align}
whenever  $n\ge n_0$,  $(n, t, \theta)$ satisfies \eqref{nt-hyp},  
and $\theta_0\le \theta \le \theta_0^{-1} \sqrt{n}.$

Moreover, under the previous assumptions we also have
\begin{align}
\label{Prop4LBvar}
\Vvv\bigl[ \log Z^{ \theta}_{ n, t}\bigr]\ge c { n^{\frac23}}{ \theta^{-\frac43}}
\end{align}

\end{proposition}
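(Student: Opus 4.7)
I would first establish the variance lower bound \eqref{Prop4LBvar}, and then derive the distributional bound \eqref{Prop4LBdist} by a Paley--Zygmund argument. For the Paley--Zygmund step, integrating the tail bounds \eqref{aa-1} and \eqref{aa-3.5} of Lemma \ref{main-upper-bound} yields moment upper bounds $\E|\log Z_{n,t}^{\theta}-\E\log Z_{n,t}^{\theta}|^{p}\lesssim (n^{1/3}\theta^{-2/3})^{p}$ for $p\le 4$; combined with \eqref{Prop4LBvar} and applied to the square $(\log Z_{n,t}^{\theta}-\E\log Z_{n,t}^{\theta})^{2}$, Paley--Zygmund gives a positive probability of a deviation $|\log Z_{n,t}^{\theta}-\E\log Z_{n,t}^{\theta}|\ge\delta_{1}n^{1/3}\theta^{-2/3}$. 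A tail-by-tail analysis using the moment bounds then picks out the signed form \eqref{Prop4LBdist}.

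For the variance lower bound, I would argue by contradiction via a boundary-parameter perturbation. Suppose $\Vvv(\log Z_{n,t}^{\theta})<\delta\, n^{2/3}\theta^{-4/3}$ for a small $\delta$. Fix a large constant $\epsilon_{0}$ and set $\lambda=\theta-\epsilon_{0}n^{-1/3}\theta^{2/3}$. The perturbed triple $(n,t,\lambda)$ then sits at distance $\sim\epsilon_{0}n^{2/3}\lambda^{-4/3}$ from the characteristic direction for $\lambda$, so \eqref{nt-hyp} holds at $\lambda$ with a new constant $\sim\epsilon_{0}$. Lemma \ref{4.1lem} propagates the variance bound, yielding $\Vvv(\log Z_{n,t}^{\lambda})\le(\delta+c_{0}\epsilon_{0})n^{2/3}\theta^{-4/3}$, where $c_{0}=|\Psi_{2}(\theta)|\theta^{2}$ is bounded below uniformly for $\theta\in[\theta_{0},\theta_0^{-1}\sqrt{n}]$ (Section \ref{gtb}). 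Meanwhile, the annealed mean shift $E_{n,t}^{\lambda}[\sigma_{0}]=t-n\Psi_{1}(\lambda)\sim -c_{0}\epsilon_{0}n^{2/3}\theta^{-4/3}$, combined with the variance identity \eqref{var-ident} at $\lambda$ and the trivial $E^{\lambda}[\sigma_{0}^{+}]\ge 0$, yields $E_{n,t}^{\lambda}[\sigma_{0}^{-}]\ge c_{0}\epsilon_{0}n^{2/3}\theta^{-4/3}$. The aim is to exhibit a contradiction between this lower bound and the variance bound (which controls $E^{\lambda}[\sigma_{0}^{-}]$ from above through $\Vvv(\log Z^{\lambda})=E^{\lambda}|\sigma_{0}|$).

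The main obstacle is that the Lipschitz constant in Lemma \ref{4.1lem} and the mean-shift coefficient $c_{0}$ both come from $|\Psi_{2}(\theta)|$ and coincide at leading order, so the naive chain of inequalities degenerates. Closing the argument requires a strict improvement somewhere: one natural route is to revisit the derivation of Lemma \ref{4.1lem}, which uses the pointwise bound $|\mathrm{Cov}_{Q^{\nu}}(|\sigma_{0}|,\sigma_{0})|\le\Vvv_{Q^{\nu}}(\sigma_{0})$ with equality only when $\sigma_{0}$ is one-signed under $Q^{\nu}$; since $\sigma_{0}$ has a continuous nondegenerate distribution under $Q^{\nu}$ (from the Brownian Gibbs form), this bound is strict, and a quantitative version --- combined with the upper moment bound \eqref{aa-2} at $\lambda$ controlling the spread of $\sigma_{0}$ under $Q^{\lambda}$ --- should provide enough slack to turn the tautological inequality into a contradiction for large $\epsilon_{0}$. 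An alternative route exploits the Burke representation $\log Z^{\theta}_{n,t}=-B(t)+\theta t+\sum_{k=1}^{n}r_{k}(t)$: near the characteristic direction the variances of $B(t)$ and $\sum_{k}r_{k}(t)$ coincide (both $\sim n\Psi_{1}(\theta)$), and the non-Gaussianity of the sum $\sum_{k}r_{k}(t)$ (third cumulant $n|\Psi_{2}(\theta)|$, as opposed to $0$ for the Gaussian $B(t)$) forces Cauchy--Schwarz to be strict with a quantitative gap that should produce the required $n^{2/3}\theta^{-4/3}$ lower bound on $\Vvv(\log Z^{\theta}_{n,t})$.
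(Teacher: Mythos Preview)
Your proposal has genuine gaps in both halves.

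For the variance lower bound, you correctly identify that the naive perturbation argument degenerates: the Lipschitz constant $n|\Psi_1(\lambda)-\Psi_1(\theta)|$ in Lemma~\ref{4.1lem} and the mean shift $|t-n\Psi_1(\lambda)|$ both equal $c_0\epsilon_0 n^{2/3}\theta^{-4/3}$ at leading order, so no contradiction appears. Neither of your repairs closes this. Route (b) fails quantitatively: the standardized third cumulant of $\sum_k r_k(t)$ is $n|\Psi_2(\theta)|/(n\Psi_1(\theta))^{3/2}\sim n^{-1/2}\theta^{-1/2}$, strictly smaller than the Cauchy--Schwarz gap $\sim n^{-1/3}\theta^{-1/3}$ you would need to recover $\Vvv(\log Z^\theta)\gtrsim n^{2/3}\theta^{-4/3}$. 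Route (a) is circular: a quantitative strictness of $|\Cov_{Q^\nu}(|\sigma_0|,\sigma_0)|<\Vvv_{Q^\nu}(\sigma_0)$ would require control on how much mass $\sigma_0$ places on each sign under $Q^\nu$, which is essentially the object under study. For the Paley--Zygmund half, inequalities \eqref{aa-1} and \eqref{aa-3.5} concern the quenched law of $\sigma_0$, not tails of $\log Z^\theta$; integrating them does not give a fourth-moment bound on $\overline{\log Z^\theta}$, and the only moment available from \eqref{aa-4} is the second. Moreover, Paley--Zygmund applied to $(\overline{\log Z^\theta})^2$ yields only a two-sided bound $\P(|\overline{\log Z^\theta}|\ge\delta_1 n^{1/3}\theta^{-2/3})\ge\delta_2$, and Chebyshev from the variance alone cannot isolate the positive tail.

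The paper proceeds in the opposite order, proving \eqref{Prop4LBdist} directly and deducing \eqref{Prop4LBvar} as a consequence. The mechanism is a change of measure on the boundary Brownian motion: perturb to $\lambda=\theta+b\theta^{2/3}n^{-1/3}$, use Lemma~\ref{main-upper-bound} together with the shift invariance \eqref{altQ2} to show that under $Q^\lambda_{n,t}$ the entry time $\sigma_0$ localizes in a window $[c_1v,c_2v]$ with $v\sim bn^{2/3}\theta^{-4/3}$, rewrite this as a lower bound on $\int_{c_1v}^{c_2v}e^{-B(s)+\lambda s}Z_{(1,n),(s,t)}\,ds$, and then apply Cameron--Martin--Girsanov to $B$ on $[c_1v,c_2v]$ to replace $\lambda$ by $\theta$ at the cost of a bounded Radon--Nikodym factor $e^{O(b^3)}$. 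The mean shift $\E\log Z^\lambda-\E\log Z^\theta\gtrsim b^2 n^{1/3}\theta^{-2/3}$ then supplies the signed deviation in \eqref{Prop4LBdist}.
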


\begin{proof}
It is sufficient to prove  estimate (\ref{Prop4LBdist}) since the lower bound (\ref{Prop4LBvar}) follows from this easily.
Fix a  constant $0<b<\theta^{1/3} n^{1/3}$ 
and set 
  $\lambda= \theta + b \theta^{2/3}n^{-1/3}<2\theta$ and $\bar t=t+n\trigamf(\lambda)-
n\trigamf(\theta)$. Then we have 
\begin{align}\label{lower0}
v=t-\bar t =n(\trigamf(\theta) -\trigamf(\lambda) ) &\ge  n |\Psi_2(\lambda)|(\theta-\lambda)\ge
 4^{-1} b n^{\frac23}\theta^{-\frac43}
\end{align}
where we used $|\Psi_2(\lambda)|\ge \lambda^{-2}\ge \theta^{-2}/4 $
from \eqref{Psi-3}.  
 We shall take $b\in(0,\infty)$ large enough
in the course of the argument, which is not problematic as $\theta^{1/3} n^{1/3}$ will be large for large enough $n$ by our assumption $\theta\ge \theta_0$. 

Fix a  $c_1\in(0,1/2)$. By the  
shift-invariance (\ref{altQ2}) we have
\begin{align*}
Q_{n,t}^\lambda(\sigma_0^{+}\le c_1v )&=Q_{n,t}^\lambda(\sigma_0 \le c_1v )\\
&\eqd Q_{n,\bar t}^\lambda(t-\bar t+\sigma_0 \le c_1v)=Q_{n,\bar t}^\lambda(\sigma_0 \le -(1-c_1)v)\\
&=Q_{n,\bar t}^\lambda(\sigma_0^{-} \ge (1-c_1)v). 
\end{align*}
Since
$\lambda\le 2\theta$,   $(n, \bar t,\lambda)$ satisfies  \eqref{nt-hyp} with  
$\kappa$ replaced by $2^{4/3}\kappa$.  
We can  apply the upper bound (\ref{aa-1}) to $\sigma^-$ with $(n, \bar t,\lambda)$ and $u=(1-c_1) v$ because 
\[ 
(1-c_1)v \ge \frac18  b n^{\frac23} \theta^{-\frac43}\ge  2 (2^{4/3}\kappa)  \lambda^{-4/3} n^{\frac23}
 \]
if we  choose $b>2^{4+4/3}\kappa$. After collecting all the terms on the right of (\ref{aa-1}) we get the upper bound
\[
P(Q_{n,t}^\lambda(\sigma_0^{+}\le c_1v )>\eps_0)\le Cb^{-3}
\]
for any fixed $\eps_0>0$ if $b$ is large enough (and hence also $n$) relative to $\eps_0$ and $\theta_0$.  This choice of $b$ is needed to ensure that
$\eps_0 > e^{-\delta \lambda^2 u^2 n^{-1}}$.

A similar shifting argument gives 
 \[
 Q_{n,t}^\lambda(\sigma_0^{+}\ge (1+c)(t-\bar t))\eqd  Q_{n,\bar t}^\lambda(\sigma_0^{+}\ge c
(t-\bar t)), 
 \]
 and the upper bound (\ref{aa-1}) can be applied with $\lambda$ and $\bar t$.
  Hence, we can fix constants $0<c_1<c_2<\infty$ such that, for a given $\e_0>0$ and 
  large $n$, 
 \be \P\bigl[\,   Q^{\lambda}_{n,t }\{c_1 v\le \sigma_0 \le  c_2 v\}  \ge  1-\e_0 \bigr]
\; \ge \; 1-Cb^{-3}.  \label{lower1}
\ee
Observe that $c_1$ can be taken as close to 0 as we wish.

Introduce  temporary notations $s_1=c_1 v$ and  $s_2=c_2 v$. Assumptions
\eqref{nt-hyp}  and 
$\theta\ge \theta_0>0$ guarantee that $s_2<t$ for large enough $n$. Hence
\[
Q^{\lambda}_{n,t }\{s_1 \le \sigma_0 \le  s_2\}=\frac{1}{Z_{n,t}^\lambda} \int_{s_1}^{s_2}
e^{-B(s)+\lambda s} Z_{1,n}(s,t) ds . 
\]
Using \eqref{nt-hyp}, 
\begin{align*}
\E (\log Z^{\lambda}_{n,t })-\E (\log Z^{\theta}_{n,t })&=n
(\digamf(\theta)-\digamf(\lambda))+(\lambda-\theta)t\\
&\geq n\int_\theta^\lambda (\trigamf(\theta)-\trigamf(\xi))\,d\xi - \kappa
n^{2/3}\theta^{-4/3}(\lambda-\theta) \\
 &\ge \tfrac12 n \abs{\Psi_2(\lambda)}  (\lambda-\theta)^2  - \kappa b \theta^{-2/3}
n^{1/3}\\
&\ge \tfrac{1}{8}n^{1/3}b^2 \theta^{-2/3}-  \kappa b \theta^{-2/3}
n^{1/3}
\ge 2c_* b^2 n^{\frac13} \theta^{-\frac23}, 
\end{align*}
with the constant $c_*=\frac{1}{32}$ if we choose $b\ge {16}\kappa$. 
Then from (\ref{lower1}) 
\begin{align}
 1-Cb^{-3} &\le  \P\Bigl[\,   Q^{\lambda}_{n,t }\{s_1 \le \sigma_0 \le  s_2 \}  \ge  1-\e_0 \Bigr]\nn\\
&\le \P\Bigl[ \, \int_{s_1}^{s_2} e^{-B(s)+\lambda s} Z_{1,n}(s,t) ds 
\ge (1-\eps_0) e^{\E (\log Z^{\theta}_{n,t })+c_*  b^2 n^{\frac13}\theta^{-\frac23}}
\,\Bigr] \label{ub-86}\\
&\qquad+\P\bigl(Z_{n,t}^\lambda \le e^{\E (\log Z^{\lambda}_{n,t })-c_*b^2
n^{\frac13}\theta^{-\frac23}}\bigr).  \label{ub-87}
 \end{align}
Bound probability \eqref{ub-87} with Chebyshev:
 \be\nn\label{lower2}
 \P\bigl(Z_{n,t}^\lambda \le e^{\E (\log Z^{\lambda}_{n,t })-{c_*}b^2
n^{\frac13}\theta^{-\frac23}}\bigr)
 \le c_*^{-2} b^{-4} \theta^{\frac43}n^{-\frac23} \Vvv(\log Z_{n,t}^\lambda). 
 \ee
 The variance is estimated by (\ref{4.1lem}) and (\ref{lower0}):
 \[
\Vvv(\log Z_{n,t}^\lambda)\le \Vvv(\log Z_{n,t}^\theta)+n \abs{\trigamf (\lambda)-\trigamf(\theta)}
\le C (1+b)n^{\frac23}\theta^{-\frac43}. 
 \]
 This implies  that \eqref{ub-87} $\le$ $C b^{-3}$. 
 
Let $A$ denote the event in probability \eqref{ub-86}:
 \begin{align*}
\P(A)&= \P\Bigl[ e^{-B(s_1)+\lambda s_1} \int_{s_1}^{s_2} e^{-B(s_1,s)+\lambda (s-s_1)} Z_{1,n}(s,t) ds 
\ge (1-\eps_0) e^{\E (\log Z^{\theta}_{n,t })+{c_*}  b^2 n^{\frac13}\theta^{-\frac23}}\Bigr] \\
&= \P\Bigl[ e^{-B(s_1)+\theta s_1} \int_{s_1}^{s_2} e^{-B(s_1,s)+\lambda (s-s_1)} Z_{1,n}(s,t)
ds
\ge (1-\eps_0) e^{\E(\log Z^{\theta}_{n,t })+c_*  b^2
n^{\frac13}\theta^{-\frac23}-(\lambda-\theta)s_1}\Bigr]  
 \end{align*}
 We wish to  replace $\lambda$ by $\theta$ inside the integral to match it with the parameter in   $\E(\log Z^{\theta}_{n,t })$ on the right-hand side. 
For this  we  use the Cameron-Martin-Girsanov formula to add a drift $\lambda-\theta$ to the
 Brownian motion $\{ B(s_1,s): s_1\le s \le s_2\}$.  
 Note that the other random objects in the event 
 $A$, namely   $\{B(s_1);  B_i(\cdot): 1\le i\le n\}$,
    are independent of $B(s_1,\cdot\,)$.  
 Let 
 \[
 \frac{d\wt\P}{d\P} =e^{(\lambda-\theta)B(s_1,s_2)-\frac12 (\theta-\lambda)^2 (s_2-s_1)}
 \]
so that, under $\wt\P$,  $B(s_1,s)\eqd \tilde B(s_1,s)+(\lambda-\theta)(s-s_1)$ where  $\tilde B$ is a 
   standard Brownian motion.
By   Cauchy-Schwarz
 \[
 \P(A)=\wt\E\Bigl[ \frac{d \P}{d\wt\P} \ind(A)\Bigr]\le \sqrt{ \wt\E \Bigl[\Bigl( \frac{d
\P}{d\wt\P} \Bigr)^2\;\Bigr]}\sqrt{\wt\P (A)}. 
 \]
The first expectation is finite: 
 \begin{align*}
  \wt\E \Bigl[ \Bigl(\frac{d \P}{d\wt\P}\Bigr)^2\;\Bigr]&=\wt\E e^{2(\theta-\lambda)B(s_1,s_2)+(\theta-\lambda)^2
(s_2-s_1)}=\wt\E e^{2(\theta-\lambda)(\tilde
B(s_1,s_2)+(\lambda-\theta)(s_2-s_1))+(\theta-\lambda)^2 (s_2-s_1)}\\
  &=\wt\E e^{2(\theta-\lambda)\tilde B(s_1,s_2)- (\theta-\lambda)^2 (s_2-s_1)}=e^{(\theta-\lambda)^2
(s_2-s_1)}\le e^{C b^3}
 \end{align*} 
 where $C$ depends only on $\theta_0$. 
 We bound the probability $\wt\P(A)$ as follows: recall $c_0$ from \eqref{c0}, 
  \begin{align*}
 \wt\P (A) &=\wt\P\Bigl[ e^{-B(s_1)+\theta s_1} \int_{s_1}^{s_2} e^{-B(s_1,s)+\lambda (s-s_1)} Z_{1,n}(s,t)
ds 
\ge (1-\eps_0) e^{E \log  Z^{\theta}_{n,t }+c_*  b^2
n^{\frac13}\theta^{-\frac23}-(\lambda-\theta)s_1}\Bigr]\\
&=\wt\P\Bigl[e^{-B(s_1)+\theta s_1} \int_{s_1}^{s_2} e^{-\tilde B(s_1,s)+\theta (s-s_1)}
Z_{1,n}(s,t) ds 
\ge (1-\eps_0) e^{E \log Z^{\theta}_{n,t }+c_*  b^2
n^{\frac13}\theta^{-\frac23}-(\lambda-\theta)s_1}\Bigr]\\
&\le  \P\bigl(Z_{n,t}^\theta
\ge (1-\eps_0) e^{E \log Z^{\theta}_{n,t }+c_* b^2
n^{\frac13}\theta^{-\frac23}-(\lambda-\theta)s_1}\bigr)\\
&\le  \P\bigl(\,\overline{\log Z_{n,t}^\theta}\ge \log (1-\eps_0)+c_* b^2
n^{\frac13}\theta^{-\frac23}-c_0c_1 b^2 n^{\frac13}\theta^{-\frac23}\bigr)\\
&\le  \P\bigl(\,\overline{\log Z_{n,t}^\theta}\ge \tfrac1{2} c_* b^2 n^{\frac13}\theta^{-\frac23}\bigr)
 \end{align*}
 where the last line follows after choosing $c_1$    small enough.  
 
Put the estimates back on lines  \eqref{ub-86}--\eqref{ub-87} to conclude that  
  \[
(1-C b^{-3})^2 \exp(-C b^3)\le   \P\bigl\{ \,\overline{\log Z_{n,t}^\theta}\ge \tfrac1{2} c_* b^2
n^{1/3}\theta^{-\frac23}\bigr\}  
 \]
for a  constant $C$  that depends only on $(\kappa, \theta_0$). This completes the proof of the proposition. 
\end{proof}


\begin{proof}[Proof of the lower bound in Theorem \ref{thm-unscaled}]
As in the upper bound proof, condition  (\ref{nt-hyp})  with $\kappa=0$ is satisfied by 
variables $(\tilde n, t,\tilde \theta)$ from \eqref{var7}.  
By \eqref{Psi-2} and   \eqref{Psi-3}, for any $0<y_0<\infty$ there exists
a constant $C(y_0)$ such that 
  $\Psi_1^{-1}(y)\le C(y_0)/y$ for 
  $y\in(0, y_0]$.   Hence if $\alpha>0$, $\tilde \theta$
satisfies $0< \Psi_1^{-1}(\beta_0^2)\le \tilde \theta\le C_1\beta_0^{-2} n^{2\alpha}\le C_2 \sqrt{\tilde n}$ if $\tau\geq \tau_0$ for some positive constant $\tau_0(\beta_0)$, and Proposition  \ref{prop-lower-bd} can be applied.  
If $\alpha=0$   the hypothesis of Proposition  \ref{prop-lower-bd}  
is immediately true.  By the scaling identity (\ref{statscaling}) this proposition gives, with a constant $c>0$ that depends on $\beta_0$, 
\[  
\Vvv (\log Z_{\tau n,\tau n}^{\para, \beta})=
\Vvv(\log Z_{\tilde n,t}^{\tilde \theta})\ge c {\tilde n}^{2/3}{\tilde \theta}^{-4/3}\ge c \tau^{2/3} { n}^{\frac23(1-4\alpha)}
\qedhere \] 
\end{proof}
\subsection{Bounds on the path for the stationary model}
\label{pf-dpbe-stat-path}

\begin{proof}[Proof of Theorem \ref{thm:statpath}]
We start with the proof of the upper bound (\ref{statpathUB}). 
We introduce the familiar rescaling $\tilde \theta=\Psi_1^{-1}(\beta_0^2 n^{-2\alpha})$, $\tilde n=\tau n$ and $t=\tau \beta_0^2 n^{1-2\alpha}$. Then
\be\begin{aligned}
&P^{\para,\beta}_{\tau n,\tau n}\left\{ |\sigma_{\gamma \tau n}-\gamma \tau n|>b
\tau^{\frac23}n^{\frac23(1-\alpha)}\right\}
\le  P^{\tilde\para}_{\tilde n,t}\left\{ |\sigma_{\gamma\tilde n}-\gamma t|>b \beta_0^{-\frac23} {\tilde n}^{\frac23}{ \tilde \theta}^{-\frac43}  \right\}\\
&\qquad\qquad 
=  P^{\tilde\para}_{(1-\gamma)\tilde n,(1-\gamma)t}\left\{ |\sigma_0|>b \beta_0^{-\frac23} {\tilde n}^{\frac23} {\tilde \theta}^{-\frac43}  \right\} \le C b^{-3}.
\end{aligned}\label{ub-9}\ee
The first inequality is from   Brownian scaling (\ref{statscaling}). 
After the change of variable from the Brownian scaling step, 
the quantity on the right of the inequality inside the braces develops as follows:
\[   \beta^2 b\tau^{\frac23}n^{\frac23(1-\alpha)}
=  \beta_0^2 b\tau^{\frac23}n^{\frac23}  n^{-\frac83\alpha} 
\ge  b \beta_0^{-\frac23} {\tilde n}^{\frac23}{ \tilde \theta}^{-\frac43} 
\]
using   $\Psi_1^{-1}(x)\ge x^{-1}$ \eqref{Psi-3}.  
The second step (equality) in \eqref{ub-9} comes from   shift invariance in $t$ and $n$: (\ref{altQ2}) and (\ref{altQ3}).
The  last inequality  in \eqref{ub-9}  is  the upper bound (\ref{aa-3}) for $\sigma_0^\pm$. Note
that   $((1-\gamma)\tilde n, (1-\gamma)t, \tilde  \theta)$ satisfies  (\ref{nt-hyp}) with $\kappa=0$.  The constant $c$ from (\ref{aa-3})  depends  only on the
lower bound  $\tilde\theta_0=\Psi_1^{-1}(\beta_0^2)$ and so $C$ above
 depends only on $\beta_0$. 
This completes the proof of (\ref{statpathUB}).

We now prove the bound (\ref{statpath3}):
using  Brownian scaling (\ref{statscaling}) and   shift invariance (\ref{altQ2}), (\ref{altQ3}) again, 
\[
E^{\para,\beta}_{\tau n,\tau n} |\sigma_{\gamma \tau n}-\gamma \tau n|=\beta^{-2} E^{\tilde \para}_{\tilde n, t} |\sigma_{\gamma \tilde n}-\gamma t|=
\beta^{-2} E^{\tilde \para}_{(1-\gamma)\tilde n, (1-\gamma)t} |\sigma_{0}|. 
\]
By (\ref{var-ident})  
\[
\beta^{-2} E^{\tilde \para}_{(1-\gamma)\tilde n, (1-\gamma)t} |\sigma_{0}|=\beta_0^{-2} n^{4\alpha} \Vvv [\,\log Z^{\tilde \para}_{(1-\gamma) \tilde n (1-\gamma) t}\,].
\]
Now using (\ref{aa-4}) and (\ref{Prop4LBvar}) we have 
\[
C_1^{-1} (1-\gamma)^{2/3} \tilde n^{2/3} \tilde \theta^{-4/3} \le \Vvv \log [\,Z^{\tilde \para}_{(1-\gamma) \tilde n (1-\gamma) t}\,]\le C_1 ((1-\gamma)^{2/3} \tilde n^{2/3} \tilde \theta^{-4/3}+1)
\]
Using the asymptotics for $\tilde \para$ we get
\begin{align*}
C_1^{-1}  \tau^{2/3} n^{\frac23(1-\alpha)}\le E^{\para,\beta}_{\tau n,\tau n} |\sigma_{\gamma \tau n}-\gamma \tau n|&\le C_1 \tau^{2/3} n^{\frac23(1-\alpha)}.\qedhere
\end{align*}
\end{proof}


\subsection{Bounds for the point-to-point model}
\label{sec:freeZ}
This section derives  bounds on the path and  free energy fluctuations
in the point-to-point case without boundaries, with $\beta=1$, uniformly in $(n, t, \theta)$. Theorems \ref{thm:freeZ}   and  \ref{thm:path-ptp}   
 follow after a  Brownian scaling step.
For $n\in\bN$, $t>0$  and events $D$ on the paths 
  write $Z_{n,t}^\para(D)=Z_{n,t}^\para Q_{n,t}^\para(D)$ for the unnormalized
quenched measure.

\begin{theorem}\label{thm:freeZa}  Fix $0<\theta_0<\infty$. 
Let $\theta=\Psi_1^{-1}(t/n)$ satisfy 
$
\theta_0\le \theta \le \theta_0^{-1} \sqrt{n}.
$
 Then there exist constants $b_0, C$ that depend only on $\theta_0$ so that for $n\ge n_0$, $b\ge b_0$ we have 
\be \label{free0.1}
\P\bigl\{\abs{\log Z_{(1,n),(0,t)} - (\theta t-n \Psi_0(\theta)) }
\ge b n^{\frac13} \theta^{-\frac23}\bigr\}\le C b^{-3/2}+\theta e^{-b \, n^{\frac13} \theta^{-\frac23} } 
 \ee 
and
\be\label{free0.2}
 C^{-1} (n^{\frac13} \theta^{-\frac23}-\log n)\;\leq \; \E\abs{\log Z_{(1, n),(0,t)} 
-(\theta t-n \Psi_0(\theta)) }  \;\leq \; C n^{\frac13} \theta^{-\frac23}+ \theta e^{-b_0 \, n^{\frac13} \theta^{-\frac23} }
\ee
 \end{theorem}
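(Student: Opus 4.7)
The strategy is to compare the point-to-point free energy with the stationary model at the characteristic parameter $\theta = \Psi_1^{-1}(t/n)$ via the integral representation
\[
Z^\theta_{n,t} \;=\; \int_{-\infty}^t e^{-B(s)+\theta s}\, Z_{(1,n),(s,t)}\, ds.
\]
Under the hypothesis, $(n,t)$ sits exactly in the characteristic direction ($\kappa=0$ in \eqref{nt-hyp}), so Lemma~\ref{main-upper-bound} and Proposition~\ref{prop-lower-bd} control $\log Z^\theta_{n,t}$ around its mean $\theta t - n\Psi_0(\theta)$ at the KPZ scale $n^{1/3}\theta^{-2/3}$, with polynomial $b^{-3/2}$ tails via Chebyshev, and a matching variance lower bound. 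The bridge to the point-to-point model comes from the one-sided inequalities obtained by restricting the first jump $s_1$ in $Z_{(1,n),(s,t)}$ to lie above $0$: for $s<0$ this restriction is a genuine lower bound and gives $Z_{(1,n),(s,t)} \ge e^{-B_1(s)}\, Z_{(1,n),(0,t)}$, while for $s>0$ the dual factorization of $Z_{(1,n),(0,t)}$ (restricting its first jump to lie above $s$) gives $Z_{(1,n),(s,t)} \le e^{-B_1(s)}\, Z_{(1,n),(0,t)}$.

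\paragraph{Upper bound.} Integrating the $s<0$ inequality against $e^{-B(s)+\theta s}\,ds$ over $(-\infty,0)$, followed by the substitution $r=-s$, yields
\[
Z^\theta_{n,t} \;\ge\; Z_{(1,n),(0,t)}\cdot J, \qquad J \;=\; \int_0^\infty e^{W(r)-\theta r}\,dr,
\]
where $W(r):=-B(-r)-B_1(-r)$ is a Brownian motion of variance $2$ independent of $B_2,\dots,B_n$. The random variable $J$ is a classical exponential functional of Brownian motion with negative drift $-\theta$: it is concentrated near $(2\theta)^{-1}$, and $-\log J - \log(2\theta)$ has a sub-exponential upper tail of the form $\bP(-\log J > \log(2\theta) + x) \le C\theta\, e^{-cx}$. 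Taking logs gives $\log Z_{(1,n),(0,t)} \le \log Z^\theta_{n,t} - \log J$. Combining the stationary concentration (Lemma~\ref{main-upper-bound}) with the tail of $-\log J$ produces the upper-deviation half of \eqref{free0.1}: the polynomial tail of $\log Z^\theta_{n,t}$ yields the $Cb^{-3/2}$ term, while the exponential tail of $-\log J$ yields the additive $\theta e^{-b n^{1/3}\theta^{-2/3}}$ term; the $\log\theta \le \tfrac12\log n$ offset is absorbed into the fluctuation scale $b n^{1/3}\theta^{-2/3}$ once $n$ is large.

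\paragraph{Lower bound and main obstacle.} For the matching lower deviation one needs an upper bound on the ratio $Z^\theta_{n,t}/Z_{(1,n),(0,t)}$. The natural attempt on the $s>0$ side fails: the integrand $e^{-B(s)-B_1(s)+\theta s}$ now carries drift $+\theta$, so $\int_0^u e^{-B-B_1+\theta s}\,ds$ is of order $e^{\theta u}/\theta$ rather than $O(1/\theta)$, which destroys the inequality at the KPZ scale. Here the decomposition attributed to Damron enters: because $\E^\theta_{n,t}\sigma_0 = t-n\Psi_1(\theta) = 0$ and the tail bound \eqref{aa-3} forces $Q^\theta_{n,t}\{|\sigma_0|\le u\} \ge \tfrac12$ on an event of large probability for $u\asymp n^{2/3}\theta^{-4/3}$, with constant probability at least one of $Q^\theta_{n,t}\{\sigma_0\in(-u,0)\}$ and $Q^\theta_{n,t}\{\sigma_0\in(0,u)\}$ is bounded away from zero. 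On the negative side one uses the upper estimate $Z_{(1,n),(s,t)} \lesssim e^{-B_1(s)} Z_{(1,n),(0,t)}$ valid after controlling the extra contribution of paths with $s_1\in(s,0)$ by a recursive argument on the level index (the extra mass is small by comparison with $Z_{(2,n),(0,t)}$), and the $s<0$ integrand $e^{-B-B_1+\theta s}$ now has the favorable negative drift, giving an $O(1/\theta)$ integral. When instead the mass falls on the positive side, the reflection symmetry $(k,s)\leftrightarrow(n+1-k,t-s)$ of the semidiscrete environment swaps the two cases. This closes the lower-deviation half of \eqref{free0.1}. The principal technical obstacle is precisely this sign-mismatch: the straightforward integral representation is tight in only one direction, and the Damron decomposition together with the environment's symmetry is what allows one to close both sides at the KPZ scale.

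\paragraph{Moment bound.} The bound \eqref{free0.2} follows by integrating the tail of \eqref{free0.1}. The polynomial piece $Cb^{-3/2}$ contributes $Cn^{1/3}\theta^{-2/3}$ to the absolute first moment, and the exponential piece $\theta e^{-b n^{1/3}\theta^{-2/3}}$ produces the explicit additive remainder on the right of \eqref{free0.2}. The lower bound on $\E|\log Z_{(1,n),(0,t)} - (\theta t - n\Psi_0(\theta))|$ is obtained by transferring the variance lower bound of Proposition~\ref{prop-lower-bd} for $\log Z^\theta_{n,t}$ through the same two-sided comparison together with a Paley--Zygmund-style argument, using that the correction $\log J$ has a bounded variance of order $\theta^{-2}$. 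The subtracted $\log n$ absorbs the $\log\theta$ offset inherent in the exponential-functional estimates.
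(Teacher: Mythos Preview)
Your upper-tail half of \eqref{free0.1} is essentially the paper's argument: the functional $J=\int_0^\infty e^{W(r)-\theta r}\,dr$ you write down is exactly $e^{r_1(0)}$ from \eqref{sc-r}, so $Z^\theta_{n,t}\ge e^{r_1(0)}Z_{(1,n),(0,t)}$ is precisely the bound \eqref{bound} the paper uses, and $e^{-r_1(0)}\sim\text{Gamma}(\theta)$ gives the exponential remainder by Markov.

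The lower-tail half, however, has a genuine gap. Your plan to bound $Z^\theta_{n,t}/Z_{(1,n),(0,t)}$ from above rests on the claim that for $s<0$ one has $Z_{(1,n),(s,t)}\lesssim e^{-B_1(s)}Z_{(1,n),(0,t)}$ ``after controlling the extra contribution of paths with $s_1\in(s,0)$ by a recursive argument on the level index.'' But the extra piece is $\int_s^0 e^{B_1(s,s_1)}Z_{(2,n),(s_1,t)}\,ds_1$, and $Z_{(2,n),(s_1,t)}$ for $s_1<0$ is again larger than $Z_{(2,n),(0,t)}$ by the same one-sided inequality you started with; the recursion does not close, and the reflection symmetry you invoke does not convert this into the favorable sign. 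The paper's actual mechanism (Lemma~\ref{LBlemma3}) is quite different: it passes to the \emph{reversed} environment via \eqref{ZZtil}, applies the comparison inequalities of Lemma~\ref{lemma-comp} with a \emph{perturbed} parameter $\lambda=\theta\mp\nu$ to turn the ratio $Z_{(1,n),(s,t)}/Z_{(1,n),(0,t)}$ into $e^{\tilde Y_{n-1}(\cdot)-\lambda s}/\tilde Q^\lambda_{n-1,t}(\sigma_0\lessgtr 0)$, controls $\tilde Q^\lambda_{n-1,t}(\sigma_0\lessgtr 0)$ by the upper bound \eqref{aa-1} applied to the shifted system, and finishes with Dufresne's identity for the resulting exponential functional. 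For $b$ larger than $c_0 n^{2/3}\theta^{2/3}$ this route breaks down and the paper instead uses the GUE eigenvalue tail (Lemma~\ref{LBLemma3.5}); your proposal has no analogue of either ingredient.

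Finally, the ``Damron decomposition'' you invoke is not the $\sigma_0\in(-u,0)$ versus $\sigma_0\in(0,u)$ split. In the paper it refers to the argument for the \emph{first-moment lower bound} in \eqref{free0.2}: one writes $f_n=\E[\log Z^\theta_{n,t}\mathbf{1}_A]+\E[\log Z^\theta_{n,t}\mathbf{1}_{A^c}]$ on the event $A=\{\log Z^\theta_{n,t}\ge f_n+\delta_1 n^{1/3}\theta^{-2/3}\}$ from Proposition~\ref{prop-lower-bd}, and then inserts $\log Z^\theta_{n,t}\ge r_1(0)+\log Z_{(1,n),(0,t)}$ on $A^c$. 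This is direct and does not require a Paley--Zygmund step or a two-sided comparison; the $\log n$ correction comes from $\E|r_1(0)|$.
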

\begin{remark}
The basic strategy of our proofs is the following. If we consider $Z_{n,t}^{\theta}$ with $\theta$ defined according to the theorem then $\E_{n,t}^\theta \sigma_0=0$. Since we expect $\sigma_0$ to be fairly close to its mean, this would suggest that $Z_{n,t}^\theta$ is fairly close to $Z_{(1,n),(0,t)}$. The main components of the proofs will rely on  comparisons between the partition functions of the two models and on the results proved about the stationary model.


Note that the centering $\theta t-n \Psi_0(\theta)$ inside probability \eqref{free0.1} is right choice, as it can be seen from the exact expression for the free energy \eqref{free-MOC} and Brownian scaling.
\end{remark}

 
\begin{proof}[Proof of the lower  bound in \eqref{free0.2}]

Note that $(n, t, \theta)$ satisfy \eqref{nt-hyp} with $\kappa=0$. 
Let  
\be f_n=\theta t-n \Psi_0(\theta)= \E ( \log Z^{\theta}_{ n,t}).
\label{f_n}\ee
Define the event $A  = \{ \log Z^{\theta}_{ n,t} \geq f_n + \delta_1
n^{\frac13} \theta^{-\frac23}  \}$.   
By Proposition \ref{prop-lower-bd} there exist $\delta_1, \delta_2>0$ such that 
\be\label{lb-again}
\P \left( A \right) \geq \delta_2. 
\ee
From (\ref{ZB2}) we get the simple bound 
\be\label{bound}
Z_{ n,t}^\para\ge e^{r_1(0)} Z_{(1, n),(0,t)}.
\ee
Utilizing this, 
\begin{eqnarray*}
f_n &=& \E [(\log Z^{\theta}_{n,t}) {\bf 1}_A] + \E [(\log Z^{\theta}_{ n,t}) {\bf 1}_{A^c}]\\
&\geq& \P(A) (f_n + \delta_1 n^{\frac13} \theta^{-\frac23}  )+\E  [  (\log
Z^{\theta}_{n,t}
-f_n ){\bf 1}_{A^c} ] + f_n \P(A^c)\\
&\geq& f_n + \delta_1 \P(A)  n^{\frac13} \theta^{-\frac23} +\E \left[ \left(\log Z_{(1,n),(0,t)}
-f_n\right){\bf 1}_{A^c}\right] + \E [{r_1(0)}{\bf 1}_{A^c}]. 
\end{eqnarray*}
Rearranging and using (\ref{lb-again}),
\begin{eqnarray*}
 \delta_1 \delta_2n^{\frac13} \theta^{-\frac23} &\leq& \delta_1 \P(A)n^{\frac13} \theta^{-\frac23}  \leq \E \left[ \left(f_n-\log Z_{(1,n),(0,t)}\right){\bf 1}_{A^c}\right] - \E {r_1(0)}{\bf
1}_{A^c}\\
&\leq& \E  \left|f_n-\log Z_{(1,n),(0,t)}\right|+ \E |{r_1(0)}|\\
&\leq& \E  \left|f_n-\log Z_{(1,n),(0,t)}\right| + C(\theta_0) \log n.
\end{eqnarray*}
We used $
\E  |{r_1(0)}|\le \sqrt{\E r_1(0)^2}=\sqrt{\Psi_1(\theta)+\Psi_0(\theta)^2} 
$
from \eqref{rk}. 
 This proves the lower bound in
(\ref{free0.2}). 
\end{proof}

\begin{proof}[Proof of the upper bounds in  \eqref{free0.1} and \eqref{free0.2}]
Inequality \eqref{bound}  gives 
\bea 
\P\left( \log Z_ { n,t}^{\para} - \log Z_{(1, n),(0,t)}   \le -b \,
n^{\frac13} \theta^{-\frac23} 
 \right)
\le \P\bigl(e^{-r_1(0)}\ge e^{b \, n^{\frac13} \theta^{-\frac23} }  \bigr)\le \theta e^{-b \, n^{\frac13} \theta^{-\frac23} }
\nn
\eea 
where the last step comes
from  Markov's inequality and   $e^{-r_1(0)}$ $\sim$ Gamma$(\theta)$.  Estimate \eqref{aa-4} gives the Chebyshev bound 
\[  \P\left( |\log Z_ { n,t}^{\para} - f_n  | \ge b \,
n^{\frac13} \theta^{-\frac23}  \right)\le 
b^{-2} n^{-\frac23} \theta^{\frac43} \Vvv(\log Z_ { n,t}^{\para}) 
\le Cb^{-2} \le C b^{-3/2}\]
which gives
\begin{align}\label{LB10}
\P\left(\log Z_{(1,n),(0,t)}-f_n\ge b \, n^{\frac13} \theta^{-\frac23}\right)\le 
C b^{-3/2}+\theta e^{-b \, n^{\frac13} \theta^{-\frac23} }.
\end{align}
The bound  on the other tail will be proved in two steps. In Lemma \ref{LBLemma3.5} below we will show that there is a $c_0>0$ depending on $\theta_0$ so that if $b>c_0 n^{2/3} \theta^{2/3}$ then 
\begin{align}\label{LB11}
\P\left(\log Z_{(1,n),(0,t)}-(\theta t-n \Psi_0(\theta))\le b \, n^{\frac13} \theta^{-\frac23}\right)\le
C e^{-C^{-1} b n^{1/3} \theta^{1/3}}.
\end{align}
In Lemma \ref{LBlemma3} below  we will show that if  $b\le c_0 n^{2/3} \theta^{2/3}$ then there are constants $C$ and $n_0$ depending on $c_0$ and $\theta_0$ so that 
\be\begin{aligned}
&\P\left(\frac{Z_{ n,t}^\para}{Z_{(1, n),(0,t)}}\ge e^{b\, n^{\frac13} \theta^{-\frac23}}\right)\le  Cb^{-3/2} 
\end{aligned} \label{free3}\ee
for all $n>n_0$.  Using the Chebyshev bound again with (\ref{free3}) and then  combining it with (\ref{LB11}) we get 
\begin{align}\label{LB12}
\P\left(\log Z_{(1,n),(0,t)}-f_n\le -b \, n^{\frac13} \theta^{-\frac23}\right)\le 
C b^{-3/2}+C e^{-C^{-1} b n^{1/3} \theta^{1/3}}.
\end{align}
The estimates  (\ref{LB10}) and (\ref{LB12}) together  establish (\ref{free0.1}).

Integrating out $b$ in (\ref{free0.1})  gives 
\[
\E  |\log Z_ { n,t}^{\para} - \log Z_{(1, n),(0,t)}  | \le C 
n^{\frac13} \theta^{-\frac23}+ C \theta e^{-b_0 \, n^{\frac13} \theta^{-\frac23} }. 
\]
Combining the above with 
 \begin{align}
\E  |\log Z_ { n,t}^{\para} -f_n | \le \sqrt{\Vvv \log Z_ { n,t}^{\para} }\le C n^{\frac13} \theta^{-\frac23}
\end{align}
verifies the upper bound of  \eqref{free0.2}. 
\end{proof}

Except for the technical estimates postponed to Section \ref{tech}, this completes  the proof of 
Theorem \ref{thm:freeZa}. 
\begin{proof}[Proof of Theorem \ref{thm:freeZ}] Recall that $\beta=\beta_0n^{-\alpha}$. 
Introduce $\tilde n=\tau n, t=\tau \beta_0^2 n^{1-2\alpha}$ and $\theta=\Psi_1^{-1}(t/\tilde n)=\Psi_1^{-1}(\beta_0^2 n^{-2\alpha})$. From Brownian scaling (\ref{bscaling}) and the explicit free energy density $\free(\beta)$ in \eqref{free-MOC}, 
\begin{align*}
&\log Z_{\tau n, \tau n}(\beta_0 n^{-\alpha})-\tau n \free(\beta)\\
&\qquad \eqd-2(\tau n-1)\log(\beta_0 n^{-\alpha})+\log Z_{(1,\tau n), (0,\beta_0^2 n^{1-2\alpha})}-\tau n \free(\beta_0 n^{-\alpha}) \\
&\qquad =\log Z_{(1,\tilde n), (0,t)}-2(\tilde n-1)\log \beta-\tilde n (\theta \beta_0^2 n^{-2\alpha}-\Psi_0(\theta)-2\log \beta)\\
&\qquad 
=\log Z_{(1,\tilde n), (0,t)}-(\theta t-\tilde n \Psi_0(\theta))+2\log \beta. 
\end{align*}
The bounds claimed in Theorem \ref{thm:freeZ} follow by quoting Theorem \ref{thm:freeZa} for $( \tilde n, t)$. Note that because $\alpha\in [0,1/4)$, the terms $\log \beta$ and $\log n$ are lower order than $n^{\frac13(1-4\alpha)}$,
and $e^{-b n^{1/3} \theta^{-2/3}}$ is lower order than $b^{-3/2}$. 
\end{proof}
We now turn to the path fluctuations for the model without boundaries. 

\begin{theorem}\label{thm:freepath}  Fix $0<\theta_0<\infty$ and 
$0<\e_0, \e_1<1/2$.   
Then there exist positive constants $b_0, C, C_1$  that 
depend only on $\theta_0, \e_0$ such that the following holds
for $n\in\bN$ and $t>0$. 
If  $\theta=\Psi_1^{-1}(t/n)$ satisfies 
\be
\theta_0\le \theta \le \theta_0^{-1} n^{1/2-\e_0} 
\label{th-e_0}\ee
then  for $n\ge n_0$, $b\ge b_0$, and  $\e_1\le  \gamma\le 1-\e_1$ we have 
\be 
P^{\beta=1}_{(1,n),(0,t)}\left(|\sigma_{\lfloor  n \gamma\rfloor}-\gamma t|>b
n^{\frac23} \theta^{-\frac43}
\right)\le C b^{-3}.
\ee 
\end{theorem}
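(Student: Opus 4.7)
I would prove Theorem~\ref{thm:freepath} by splitting the point-to-point partition function at the $k$-th jump, where $k=\lfloor\gamma n\rfloor$, and reducing the path bound to the free-energy bound of Theorem~\ref{thm:freeZa}. The basic factorization
\begin{equation*}
Z_{(1,n),(0,t)}=\int_0^t Z_{(1,k),(0,s)}\,Z_{(k+1,n),(s,t)}\,ds
\end{equation*}
(obtained by integrating out the first $k-1$ jump times in \eqref{zetadef1}) rewrites the deviation probability as
\begin{equation*}
P^{\beta=1}_{(1,n),(0,t)}(\sigma_k>\gamma t+u)=\frac{\int_{\gamma t+u}^{t} Z_{(1,k),(0,s)}Z_{(k+1,n),(s,t)}\,ds}{\int_0^{t} Z_{(1,k),(0,s)}Z_{(k+1,n),(s,t)}\,ds},
\end{equation*}
and the symmetric left-tail event is handled in the same way. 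It suffices to treat $u=bn^{2/3}\theta^{-4/3}$.

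Set $\theta_1(s):=\Psi_1^{-1}(s/k)$ and $\theta_2(s):=\Psi_1^{-1}((t-s)/(n-k))$, and let
\begin{equation*}
f(s):=\theta_1(s)s-k\Psi_0(\theta_1(s))+\theta_2(s)(t-s)-(n-k)\Psi_0(\theta_2(s))
\end{equation*}
be the sum of the centerings supplied by Theorem~\ref{thm:freeZa}. A short calculation using $\Psi_1(\theta_1(s))=s/k$ and $\Psi_1(\theta_2(s))=(t-s)/(n-k)$ gives $f'(s)=\theta_1(s)-\theta_2(s)$ and $f''(s)<0$. The choice $\theta=\Psi_1^{-1}(t/n)$ forces $\theta_1(\gamma t)=\theta_2(\gamma t)=\theta$, so $f$ peaks at $s^*=\gamma t$, and $|\Psi_2(\theta)|\asymp\theta^{-2}$ gives $|f''(s^*)|\asymp \theta^{2}/[\gamma(1-\gamma)n]$. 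Consequently for $|s-s^*|\ge u$,
\begin{equation*}
f(s^*)-f(s)\ \ge\ c\,b^{2}n^{1/3}\theta^{-2/3},
\end{equation*}
which is $b^{2}$ times the natural free-energy fluctuation scale.

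I would then upper-bound the numerator and lower-bound the denominator. The denominator is handled by applying the lower tail of \eqref{free0.1} directly to $Z_{(1,n),(0,t)}$, giving $\log Z_{(1,n),(0,t)}\ge f(s^*)-Cn^{1/3}\theta^{-2/3}$ with probability $\ge 1-Cb^{-3/2}$. The numerator is controlled by applying \eqref{free0.1} to the two factors at amplitude $\asymp b^{2}$ (producing the per-point tail $C(b^{2})^{-3/2}=Cb^{-3}$) on a discrete grid of $s$-values of mesh $\asymp n^{1/3}\theta^{-5/3}$ (chosen so that the pointwise drift $\theta_1(s)\Delta$ of the centering stays below the fluctuation scale), interpolating between grid points by the monotonicity of $Z_{(1,k),(0,s)}$ in $s$ (increasing) and $Z_{(k+1,n),(s,t)}$ in $s$ (decreasing). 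On the intersection of good events the quadratic cost dominates and the ratio is at most $\exp(-\tfrac12 cb^{2}n^{1/3}\theta^{-2/3})$, which is far smaller than $b^{-3}$.

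\textbf{Main obstacle.} The delicate step is uniformity in $s$. The grid has size $O(n^{1-2\eps_0/3})$, so a naive union bound of the $Cb^{-3}$ per-point tails yields $Cn^{1-2\eps_0/3}b^{-3}$, not the desired $Cb^{-3}$ for fixed $b$. The hypothesis $\theta\le\theta_0^{-1}n^{1/2-\eps_0}$ in the theorem is used precisely to provide the slack $n^{1/3}\theta^{-2/3}\ge n^{2\eps_0/3}$, so that the exponential tail $\theta e^{-b'n^{1/3}\theta^{-2/3}}$ in \eqref{free0.1} taken at amplitude $b'\asymp b^{2}$ becomes $\theta e^{-cb^{2}n^{2\eps_0/3}}$ and absorbs the grid cardinality. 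Isolating the non-uniform demand to $O(1)$ critical points near $s^{*}$ by exploiting monotonicity within each cell, and using the exponential tail at interior grid points, then delivers the stated $Cb^{-3}$ bound. This step explains why $\eps_0>0$ is strictly necessary here, in contrast to Theorem~\ref{thm:statpath} where shift invariance removes the uniform-in-$s$ issue altogether.
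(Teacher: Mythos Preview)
Your approach is genuinely different from the paper's and runs into a real gap at exactly the point you flag.

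The paper does not attempt a Laplace-type argument via Theorem~\ref{thm:freeZa}. Instead it compares the point-to-point measure to the stationary one in a single stroke: from $Z^{\theta}_{n,t}\ge e^{r_1(0)}Z_{(1,n),(0,t)}$ one gets
\[
Q_{(1,n),(0,t)}\bigl(|\sigma_\ell-\gamma t|>u\bigr)\ \le\ \frac{e^{-r_1(0)}Z^{\theta}_{n,t}}{Z_{(1,n),(0,t)}}\,Q^{\theta}_{n,t}\bigl(|\sigma_\ell-\gamma t|>u\bigr),
\]
and then bounds the three factors separately: $e^{-r_1(0)}$ by a Gamma tail, the ratio $Z^{\theta}_{n,t}/Z_{(1,n),(0,t)}$ by Lemma~\ref{LBlemma3}, and the stationary quenched probability by shift invariance \eqref{altQ3} together with Lemma~\ref{main-upper-bound}. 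No grid and no union bound appear; shift invariance collapses $\sigma_\ell$ to $\sigma_0$ in a smaller stationary system. The hypothesis $\theta\le\theta_0^{-1}n^{1/2-\eps_0}$ is used only at the very end, to check that the threshold $e^{-rn^{1/3}\theta^{-2/3}}\theta^{-1}b^{-6}$ dominates $e^{-\delta\theta^2u^2/(n-\ell)}$.

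Your proposed resolution of the union bound does not work as stated. The tail in \eqref{free0.1} is $Cb'^{-3/2}+\theta e^{-b'n^{1/3}\theta^{-2/3}}$, and the polynomial piece is present at \emph{every} application; it cannot be ``isolated to $O(1)$ critical points'' just by monotonicity. The real tension is between mesh and drift: the individual factor $Z_{(1,k),(0,s)}$ has logarithmic drift $\theta_1(s)\approx\theta$, so bounding a cell by $Z_{(1,k),(0,s_{j+1})}Z_{(k+1,n),(s_j,t)}$ shifts the effective centering by $\theta\Delta$, which forces $\Delta\lesssim n^{1/3}\theta^{-5/3}$ (your choice). But then the grid has $\asymp n^{2/3}\theta^{2/3}$ points, and since many of them sit within distance $u$ of $s^*+u$ (where the amplitude is only $\asymp b^2$), the summed polynomial tails are of order $n^{1/3}\theta^{1/3}b^{-2}$, not $b^{-3}$. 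A coarser mesh $\Delta\asymp u$ would make the amplitudes grow fast enough for the sum to converge, but then the $\theta\Delta$ drift in the monotonicity step is $\asymp bn^{2/3}\theta^{-1/3}$, far above the fluctuation scale, and the interpolation fails. To close your argument you would need an additional ingredient controlling the \emph{increments} $\log Z_{(1,k),(0,s')}-\log Z_{(1,k),(0,s)}$ uniformly over many intervals (for instance via Lemma~\ref{lemma-comp} and the stationary increments $Y_k$), which effectively reintroduces the stationary comparison that the paper uses from the start.
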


\begin{proof}
Since $0\le \sigma_k\le t=n \Psi_1(\theta)\le c n \theta^{-1}$ we may assume that
\begin{align}\label{bUP}
b\le c n^{1/3} \theta^{1/3}
\end{align}
with a constant $c$ depending only on $\theta_0$. 

Let $\ell =\lfloor  n \gamma\rfloor,
t'=\gamma t$ and $u=b \, n^{\frac23} \theta^{-\frac43}$. 
By the definitions and  
(\ref{bound})  
\begin{align*}
&Q_{(1, n),(0,t)}\left(| \sigma_{\ell }-t'|>u  \right)=
\frac{1}{Z_{(1, n)(0,t)}}   \int\limits_{\abs{s-t'}>u}  {Z_{(1,\ell)(0,s)}
Z_{(\ell+1, n)(s,t)}}
\,ds\\
 &\quad \le \frac{e^{-r_1(0)}}{Z_{(1, n)(0,t)}}   \int\limits_{\abs{s-t'}>u}  Z^\para_{\ell
,s}\,
Z_{(\ell +1, n)(s,t)} \,ds  
\; = \; 
 \frac{e^{-r_1(0)}Z_{ n,t}^\para}{Z_{(1, n)(0,t)}} \,Q^\theta_{ n,t}\left(| \sigma_{\ell
}-t'|>u \right) .
\end{align*}
Let  $h\in(b^{-3},1)$ (note that if $b_0$ is large enough then the interval is non-empty) and set $r=\delta b^2/(3(1-\gamma))$ with $\delta$ from Lemma
\ref{main-upper-bound}. 
\bea \nn 
\P\left(Q_{(1,n),(0,t)}\left(| \sigma_{\ell }-t'|>u  \right)>h  \right)
&\le& \P( e^{-r_1(0)}\ge \theta b^{3})
+\P\left[\frac{Z_{n,t}^\para}{Z_{(1,n)(0,t)}} \ge e^{r n^{\frac13} \theta^{-\frac23}}\right]\\
\nn&&
+\P\left[ Q^\theta_{n,t}\left(| \sigma_{\ell }-t'|>u  \right) >e^{-r n^{\frac13} \theta^{-\frac23}} h \theta^{-1} b^{-3}
\right]
\eea
On the right hand side, the first term is bounded by $C b^{-3}$ by Markov's inequality, since $e^{-r_1(0)}$ $\sim$  Gamma$(\theta)$. The second term is bounded by $C r^{-3/2}\le C b^{-3}$ by (\ref{free3})
above. To see that we can actually apply Lemma \ref{LBlemma3} note that by (\ref{bUP}) we have 
\[
r=\frac{\delta b^2}{3(1-\gamma)}\le C n^{2/3} \theta^{2/3}
\]
with a constant depending on $\theta_0$ and $\e_1$ which was the condition needed for the lemma.

 Finally, the shift invariance \eqref{altQ3}  
 and Lemma \ref{main-upper-bound} give, for large
enough $n$ and $b$ and uniformly for $h\in(b^{-3},1)$,  
\begin{align*}
&\P\left[   Q^\theta_{n,t}\left(| \sigma_{\ell }-t'|>u  \right) >e^{-r n^{\frac13} \theta^{-\frac23}} h \theta^{-1} b^{-3}
\right]\le \P\left[  
Q^\theta_{ n-\ell ,t-t'}(|\sigma_0|>u) >e^{-r n^{\frac13} \theta^{-\frac23}} \theta^{-1}b^{-6}
\right]\\
&\qquad \le 
\P\left[Q^\theta_{ n-\ell ,t-t'}(|\sigma_0|>u)>e^{-\delta  \theta^2 u^2/( n-\ell) }\right] 
\le   C b^{-3}.
\end{align*}
It is above that we need $\theta\le \theta_0^{-1}n^{1/2-\e_0}$ for $\e_0>0$,
for otherwise  the right-hand side $e^{-r n^{\frac13} \theta^{-\frac23}} \theta^{-1}b^{-6}$ cannot be bounded below by $e^{-\delta  \theta^2 u^2/( n-\ell) }$. 
Collecting the estimates gives 
\[
\P\left[Q_{(1,n),(0,t)}\left(| \sigma_{\ell }-t'|>u  \right)>h  \right]\le C  b^{-3}
\]
and from this 
\begin{align*}
&P_{(1, n),(0,t)}\left(|\sigma_{\lfloor  n \gamma\rfloor}-\gamma t|>b
n^{\frac23} \theta^{-\frac43}
\right)\\ &\qquad\qquad \le b^{-3}
 +\int_{b^{-3}}^1 \P\left[Q_{(1, n),(0,t)}\left(| \sigma_{\ell }-t'|>u  \right)>h 
\right] dh\;\le\; C b^{-3}.
\end{align*}
This completes the proof.
\end{proof}
\begin{proof}[Proof of Theorem \ref{thm:path-ptp}]
We again introduce $\tilde n=\tau n, t=\tau \beta_0^2 n^{1-2\alpha}$ and $\theta=\Psi_1^{-1}(t/\tilde n)=\Psi_1^{-1}(\beta_0^2 n^{-2\alpha})$.  Assumption 
\eqref{th-e_0} is satisfied because $\alpha<1/4$. 
Using (\ref{bscaling}) and Theorem \ref{thm:freepath} the theorem follows. 
\end{proof}
\subsection{The tail estimates}\label{tech}
In this section we prove the missing components of the proofs of Theorem \ref{thm:statpath} and Theorem \ref{thm:freeZa}.
We begin with some definitions.

Augment the family $Z_{(j,k),(s,t)}=Z_{(j,k),(s,t)}(1)$ defined
for $j\ge 1$  in \eqref{zetadef1}
by introducing, for $k\in \bN$ and $t\in\bR_+$, 
\be  Z_{(0,0),(0,t)}=e^{-B(t)}, \qquad  Z_{(0,k),(0,t)} = \hskip-20pt
\int\limits_{0<s_{0}<\dotsm<s_{k-1}<t} \hskip-20pt  \exp\bigl[ -B(s_0) +B_1(s_0,s_1) 
+\dotsm + B_k(s_{k-1},t)\bigr] \,ds_{0,k-1}.
\label{Zdef1.02}\ee
It is also convenient  to set, for $A\subseteq\bR$,  
\be Z_{0,t}^\para(\sigma_0\in A)= \ind_{A\cap\bR_+}(t) \exp[-B(t)+\para t].  \label{0conv}\ee 
The following bounds are proved in Lemma 3.8  of \cite{SV} .
\begin{lemma}\label{lemma-comp}\cite{SV}  Let $\para>0$.  For $0< s<t$ and $n\in\bZ_+$
\be
\frac{Z_{n+1,t}^\para(\sigma_0>0)}{Z_{n,t}^\para(\sigma_0>0)}
\le \frac{Z_{(0,n+1),(0,t)}}{Z_{(0, n),(0,t)}}\le
\frac{Z_{n+1,t}^\para(\sigma_0<0)}{Z_{n,t}^\para(\sigma_0<0)}
\label{comp1}\ee
and 
\be
\frac{Z_{n,t}^\para(\sigma_0>0)}{Z_{n,s}^\para(\sigma_0>0)}
\ge \frac{Z_{(0,n),(0,t)}}{Z_{(0, n),(0,s)}}\ge \frac{Z_{n,t}^\para(\sigma_0<0)}{Z_{n,s}^\para(\sigma_0<0)}. 
\label{comp2}\ee
The second inequality of \eqref{comp2} makes sense only for $n\ge 1$. 
\end{lemma}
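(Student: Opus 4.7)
The plan is to reduce both inequalities of Lemma \ref{lemma-comp} to a single pathwise monotonicity claim about a ratio of point-to-point partition functions in its starting time, and then to establish that monotonicity by induction on the number of Brownian motions involved.

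Begin with \eqref{comp1}. Splitting on the first jump time gives
\[
Z^{\para}_{n,t}(\sigma_0>0) = \int_0^t e^{\para u - B(u)}\, Z_{(1,n),(u,t)}\, du, \qquad
Z_{(0,n),(0,t)} = \int_0^t e^{-B(u)}\, Z_{(1,n),(u,t)}\, du,
\]
and analogously with $n$ replaced by $n+1$. Introduce the probability measures on $(0,t)$
\[
\mu^{\para}_n(du) \propto e^{\para u - B(u)} Z_{(1,n),(u,t)}\, du, \qquad \mu^{0}_n(du) \propto e^{-B(u)} Z_{(1,n),(u,t)}\, du.
\]
Then
\[
\frac{Z^{\para}_{n+1,t}(\sigma_0>0)}{Z^{\para}_{n,t}(\sigma_0>0)} = E^{\mu^{\para}_n}[f_n], \qquad
\frac{Z_{(0,n+1),(0,t)}}{Z_{(0,n),(0,t)}} = E^{\mu^{0}_n}[f_n],
\]
where $f_n(u) := Z_{(1,n+1),(u,t)}/Z_{(1,n),(u,t)}$. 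Since $\para>0$, the Radon--Nikodym density $d\mu^{\para}_n/d\mu^{0}_n \propto e^{\para u}$ is strictly increasing, so $\mu^{\para}_n$ stochastically dominates $\mu^{0}_n$. The first inequality of \eqref{comp1} is therefore equivalent to the monotonicity claim $(M)$: for every realization of the environment, the map $u\mapsto f_n(u)$ is non-increasing on $(-\infty,t)$.

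The second inequality of \eqref{comp1} follows from $(M)$ as well: one writes $Z^{\para}_{n+1,t}(\sigma_0<0)/Z^{\para}_{n,t}(\sigma_0<0)=E^{\nu^{\para}_n}[f_n]$ with $\nu^{\para}_n$ supported on $(-\infty,0)$, hence stochastically smaller than $\mu^{0}_n$ which is supported on $(0,t)$; the decreasing $f_n$ then delivers $E^{\nu^{\para}_n}[f_n]\ge E^{\mu^{0}_n}[f_n]$. For \eqref{comp2}, the same mechanism applies after decomposing $\int_0^t = \int_0^s+\int_s^t$ and comparing weighted averages of the ratio $v\mapsto Z_{(1,n),(u,v)}/Z_{(1,n),(u,s)}$; the required monotonicity of this map in its upper time variable $v$ is the end-time counterpart of $(M)$ and admits a parallel induction.

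The main obstacle is $(M)$. I would prove it by induction on $N$, the number of Brownian motions in the numerator partition function, establishing uniformly over all starting levels $k\ge 1$ the statement $(M_N)$: for every $t>0$ and every realization of $(B_k,\ldots,B_{k+N-1})$, the function $u\mapsto Z_{(k,k+N-1),(u,t)}/Z_{(k,k+N-2),(u,t)}$ is non-increasing on $(-\infty,t)$. The base case $N=2$ is immediate from $Z_{(k,k+1),(u,t)}/Z_{(k,k),(u,t)} = \int_u^t e^{B_{k+1}(w,t)-B_k(w,t)}\, dw$, manifestly decreasing in $u$. For the induction step, factor $Z_{(k,k+m),(u,t)} = e^{-B_k(u)} G_{k,m}(u)$ with $G_{k,m}(u):=\int_u^t e^{B_k(s)} Z_{(k+1,k+m),(s,t)}\,ds$. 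Both $G_{k,N-1}$ and $G_{k,N-2}$ are absolutely continuous in $u$, and differentiation yields
\[
\frac{d}{du}\frac{G_{k,N-1}(u)}{G_{k,N-2}(u)} = \frac{e^{B_k(u)}Z_{(k+1,k+N-2),(u,t)}}{G_{k,N-2}(u)}\left[\frac{G_{k,N-1}(u)}{G_{k,N-2}(u)} - \frac{Z_{(k+1,k+N-1),(u,t)}}{Z_{(k+1,k+N-2),(u,t)}}\right].
\]
The quantity $G_{k,N-1}/G_{k,N-2}$ is a $e^{B_k(s)} Z_{(k+1,k+N-2),(s,t)}$-weighted average of $s\mapsto Z_{(k+1,k+N-1),(s,t)}/Z_{(k+1,k+N-2),(s,t)}$ over $s\in(u,t)$. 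The inductive hypothesis $(M_{N-1})$ applied at starting level $k+1$ (involving the $N-1$ Brownian motions $B_{k+1},\ldots,B_{k+N-1}$) says exactly that this map is non-increasing in $s$, so its value at $u$ dominates the weighted average taken over $(u,t)$. Hence the bracket is non-positive, completing the induction and establishing $(M)$.
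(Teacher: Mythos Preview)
The paper does not prove this lemma; it is quoted verbatim from \cite{SV} (Lemma~3.8 there), so there is no in-paper proof to compare against.  Your argument for \eqref{comp1} is correct: the reduction to the pathwise monotonicity claim $(M)$ via likelihood-ratio ordering is clean, and your induction for $(M)$ is valid---the key step, that the weighted average of the non-increasing ratio over $(u,t)$ is dominated by its value at the left endpoint $u$, is exactly what makes the bracket non-positive.

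Your treatment of \eqref{comp2}, however, is muddled.  You write that the required monotonicity is that of $v\mapsto Z_{(1,n),(u,v)}/Z_{(1,n),(u,s)}$ in the upper time variable $v$; but that ratio is trivially increasing in $v$ (the denominator is constant), so this cannot be the content of any ``end-time counterpart'' of $(M)$.  What actually drives \eqref{comp2} is a different pathwise monotonicity: for fixed $0<s<t$, the map
\[
u\;\longmapsto\; \frac{Z_{(1,n),(u,t)}}{Z_{(1,n),(u,s)}}\qquad\text{is non-decreasing on }(-\infty,s),
\]
or, equivalently and more directly, one shows by induction on $n$ that $v\mapsto Z^{\theta}_{n,v}(\sigma_0>0)/Z_{(0,n),(0,v)}$ is non-decreasing (base case $n=0$: the ratio is $e^{\theta v}$) and $v\mapsto Z^{\theta}_{n,v}(\sigma_0<0)/Z_{(0,n),(0,v)}$ is non-increasing (base case $n=1$).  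Either of these does admit an induction entirely parallel to your proof of $(M)$, using the last-jump-time decomposition $Z_{(0,n),(0,v)}=\int_0^v Z_{(0,n-1),(0,w)}e^{B_n(w,v)}\,dw$ in place of the first-jump-time one.  So your overall plan is sound, but you should replace the incorrect description of the dual monotonicity with the correct one before calling the sketch complete.
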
 
For  $A\subset \bR$ note the identity  
\be \nn 
\frac{Z^\para_{n,t}( \sigma_0\in A)}{Z_{1,n}(0,t)}=\int_A \exp(-B(s)+\para s)
\frac{Z_{1,n}(s,t)}{Z_{1,n}(0,t)} ds.
\ee 
  We will also define a reversed system: construct a new environment $\tilde\om$ 
with 
\be\nn 
\tilde B(s)=-(B_n(t)-B_n(t-s)), \quad \tilde B_i(s)=B_{n-i}(t)-B_{n-i}(t-s), \quad 1\le i \le n-1. 
\ee
Quantities  that use environment $\tilde\om$ are marked with a tilde. 
From the definitions one checks that 
\be
Z_{1,n}(s,t)=\tilde Z_{0,n-1}(0,t-s) \quad\text{ for any $t>0$ and  $s\in(-\infty,t)$.} 
\label{ZZtil}\ee

\begin{lemma} \label{LBlemma3}  Let $\theta=\Psi_1^{-1}(t/n)$ and assume that $\theta_0\le \theta\le \theta_0^{-1} \sqrt{n}$ with a fixed $\theta_0>0$. Fix a $c_0>0$. 
Then there exist finite, positive  
constants $C, n_0$ depending on $\para_0, c_0$ such that if $ n>n_0$ and $b\le c_0 n^{2/3} \theta^{2/3}$, then 
\begin{eqnarray}\label{LBlemma3-1}
\P\left(\frac{Z_{ n,t}^\para}{Z_{(1, n),(0,t)}}\ge e^{b\, n^{\frac13} \theta^{-\frac23}}\right)\le C b^{-3/2}.
\end{eqnarray}
\end{lemma}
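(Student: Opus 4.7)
The strategy is to convert the ratio into a tractable form and then exploit the estimates already established for the stationary model. Starting from the boundary representation of the stationary partition function, write
\[
\frac{Z^{\theta}_{n,t}}{Z_{(1,n),(0,t)}} \;=\; \int_{-\infty}^{t} e^{-B(s)+\theta s}\,\frac{Z_{(1,n),(s,t)}}{Z_{(1,n),(0,t)}}\,ds,
\]
and split the integration into $s\le 0$ and $0<s<t$, which correspond to the events $\{\sigma_0\le 0\}$ and $\{\sigma_0>0\}$ under $Q^{\theta}_{n,t}$. A union bound reduces the claim to showing each half is bounded by $\tfrac12 e^{b n^{1/3}\theta^{-2/3}}$ with probability $1-Cb^{-3/2}$.

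For the $s\le 0$ contribution I would apply the time-reversal identity \eqref{ZZtil} to write $Z_{(1,n),(s,t)}=\widetilde Z_{(0,n-1),(0,t-s)}$ in the reversed environment $\widetilde\omega$, which is independent of $B|_{(-\infty,0]}$. The ratio
$\widetilde Z_{(0,n-1),(0,t-s)}/\widetilde Z_{(0,n-1),(0,t)}$ is monotone in $-s$; using Lemma \ref{lemma-comp} to sandwich it between analogous ratios of the stationary reversed model, I can insert the variance bound \eqref{aa-4} (which is scale-invariant under the reversal) and apply Chebyshev to the centered log-ratio. The positive drift $-\theta s>0$ is exactly offset on average by the drift of $\log\widetilde Z_{(0,n-1),(0,t-s)}$ on the characteristic direction, so the Brownian increments $-B(s)$ provide geometric decay and the resulting integral is controlled by a Chebyshev-type bound of order $b^{-2}\le b^{-3/2}$.

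For the $0<s<t$ contribution I would use the factorization $Z_{(1,n),(s,t)}=e^{-B_1(0,s)}Z_{(1,n),(0,t)}\,Q_{(1,n),(0,t)}(\sigma_1>s)$, so the integrand becomes $e^{-B(s)-B_1(0,s)+\theta s}\,Q_{(1,n),(0,t)}(\sigma_1>s)$. To avoid a circular use of Theorem~\ref{thm:freepath}, the tail of $\sigma_1$ under $Q_{(1,n),(0,t)}$ must be controlled by comparison with the stationary model: dominate $Z_{(1,n),(0,t)}$ from below by a factor $e^{r_1(0)}$-times $Z^{\theta}_{n,t}(\sigma_0\!>\!0)$ via \eqref{bound}-type inequalities, and then invoke the shift-invariance \eqref{altQ3} together with the path tail estimate \eqref{aa-3} of Lemma~\ref{main-upper-bound}. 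This forces $Q_{(1,n),(0,t)}(\sigma_1>s)$ to decay on the scale $n^{2/3}\theta^{-4/3}$ and confines the effective integration window, so that the logarithm of the $s>0$ piece is at most of order $\log n + \log\theta$, which is absorbed into $b n^{1/3}\theta^{-2/3}$ once $b\ge b_0$.

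The main obstacle is the $s>0$ piece, because unlike the stationary model the increments of $\log Z_{(1,n),(s,t)}$ in $s$ have no exact distributional identity. The hypothesis $b\le c_0 n^{2/3}\theta^{2/3}$ enters precisely here: the Cameron--Martin--Girsanov-type step that turns the Brownian weight $e^{-B(s)+\theta s}$ into something with a favourable sign costs a Gaussian factor $e^{C b^{3}}$ (cf.\ the analogous step in the proof of Proposition~\ref{prop-lower-bd}), and this is dominated by the variance budget $C n^{2/3}\theta^{-4/3}$ from \eqref{aa-4} only in the stated range of $b$. Outside this range the sharper exponential estimate in Lemma~\ref{LBLemma3.5} (invoked separately in the main proof) is needed instead.
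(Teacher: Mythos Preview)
Your outline has a genuine gap in both halves, and it misses the central idea of the paper's proof.

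\textbf{The $s>0$ piece.}  You write the integrand as
$e^{-B(s)-B_1(0,s)+\theta s}\,Q_{(1,n),(0,t)}(\sigma_1>s)$
and claim that once $\sigma_1$ is confined to a window of size $n^{2/3}\theta^{-4/3}$ the logarithm of the integral is $O(\log n)$.  This cannot be right: over that window the drift term $\theta s$ reaches order $\theta\cdot n^{2/3}\theta^{-4/3}=n^{2/3}\theta^{-1/3}$, which is larger than the target $b\,n^{1/3}\theta^{-2/3}$ by a factor $n^{1/3}\theta^{1/3}/b$.  The quenched probability $Q_{(1,n),(0,t)}(\sigma_1>s)\le 1$ does nothing to cancel this drift; the Brownian term $-B(s)-B_1(0,s)$ is only of order $\sqrt{s}$.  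Hence the $s>0$ integral is \emph{not} $O(\log n)$ but is genuinely of size $\exp(n^{2/3}\theta^{-1/3})$ unless a different mechanism is used.

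\textbf{The $s\le 0$ piece.}  Here you say the drifts ``exactly offset'' and Brownian decay finishes the job.  But if the drifts offset exactly (comparison at parameter $\theta$), you are left with an integral of the form $\int_{-\infty}^0 e^{\text{Brownian}(s)}\,ds$, which diverges almost surely.  There is no geometric decay to invoke.

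\textbf{What is missing.}  The paper first truncates to $|\sigma_0|\le u$ with $u=\sqrt{b}\,n^{2/3}\theta^{-4/3}$, controlling the complement by \eqref{aa-1} at the cost $b^{-3/2}$ (this choice of $u$, not $u\asymp b$, is what produces the exponent $3/2$).  On the window $[0,u]$ it applies Lemma~\ref{lemma-comp} not at $\theta$ but at a \emph{perturbed} parameter $\lambda=\theta-\nu$ with $\nu=\varepsilon\sqrt{b}\,n^{-1/3}\theta^{2/3}$.  The comparison then yields
\[
e^{\theta s}\,\frac{Z_{(1,n),(s,t)}}{Z_{(1,n),(0,t)}}\;\le\;\frac{\exp\bigl(\tilde Y_{n-1}(t-s,t)+(\theta-\lambda)s\bigr)}{\tilde Q^{\lambda}_{n-1,t}(\sigma_0<0)},
\]
so the large drift $\theta s$ is replaced by the small drift $\nu s$, and $\nu u\asymp b\,n^{1/3}\theta^{-2/3}$ now matches the target exactly.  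The denominator is handled by shift invariance and \eqref{aa-1}, and the remaining integral $\int_0^u e^{\sqrt{2}B^{\dagger}(s)+\nu s}\,ds$ is bounded via Dufresne's identity.  The hypothesis $b\le c_0 n^{2/3}\theta^{2/3}$ enters not through a Girsanov cost as you suggest, but simply to keep $\lambda=\theta-\nu$ bounded below by $\theta/2$.

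In short, the key device---perturbing the comparison parameter by $\nu\asymp\sqrt{b}$ so that the residual drift over the $\sqrt{b}$-scaled window matches the fluctuation scale---is absent from your sketch, and without it neither half of your decomposition closes.
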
 
\begin{proof}
Note that once we prove (\ref{LBlemma3-1}) for $b>b_0$ with a constant $b_0$ depending on $\theta_0, c_0$ then we can get it for all $b$ by adjusting the constant $C$. Thus we may assume that $b$ is big enough compared to $\theta_0$ and $c_0$.

Let $u=\sqrt{b} n^{2/3} \theta^{-4/3}$ and $\nu=\eps \sqrt{b} n^{-1/3} \theta^{2/3}$ where $\eps>0$ will be specified later.  Then
\be\begin{aligned}
&\P\left(\frac{Z_{ n,t}^\para}{Z_{(1, n),(0,t)}}\ge e^{b\, n^{\frac13} \theta^{-\frac23}}\right)=\P\left(\frac{Z_{ n,t}^\para(|\sigma_0|\le
u)}{Z_{(1, n),(0,t)}\,
Q_{ n,t}^\para(|\sigma_0|\le u)}\ge  e^{b\, n^{\frac13} \theta^{-\frac23}}  \right)\\
& \qquad \qquad \le \P\left( \frac{Z_{ n,t}^\para(|\sigma_0|\le u)}{Z_{(1, n),(0,t)} }\ge
\frac12 e^{b\, n^{\frac13} \theta^{-\frac23}}  \right)+
\P\left(Q_{ n,t}^\para(|\sigma_0|\le u)\le 1/2\right)
\end{aligned} \ee
The second probability can be bounded as
\begin{align}
\P\left(Q_{ n,t}^\para(|\sigma_0|\le u)\le 1/2\right)=\P\left(Q_{ n,t}^\para(|\sigma_0|> u)\ge 1/2\right)\le C b^{-3/2}
\end{align}
by (\ref{aa-1}) of Lemma \ref{main-upper-bound}
The first probability can be bounded by
\[
 \P\left( \frac{Z_{ n,t}^\para(0\le \sigma_0\le  u)}{Z_{(1, n),(0,t)} }\ge
\frac14 e^{b\, n^{\frac13} \theta^{-\frac23}}  \right)+ \P\left( \frac{Z_{ n,t}^\para(-u\le \sigma_0<0)}{Z_{(1, n),(0,t)} }\ge
\frac14 e^{b\, n^{\frac13} \theta^{-\frac23}}  \right).
\]
We will bound the first term, the second will follow similarly. 

Introduce the new parameter $\lambda=\theta-\nu$. Note that by choosing $\eps^2\le (4 c_0)^{-1}$ we can assume $\lambda>\theta/2$. 
  
Begin with \eqref{ZZtil}   and then apply   comparison  \eqref{comp2}: 
\begin{align*}
\frac{Z_{(1,n),(s,t)}}{Z_{(1,n),(0,t)}}& =\frac{\tilde Z_{(0,n-1),(0,t-s)}}{\tilde
Z_{(0,n-1),(0,t)}} \le
\frac{\tZ_{n-1,t-s}^{\lambda}(\sigma_0<0)}{\tZ_{n-1,t}^{\lambda}(\sigma_0<0)}=
\frac{\tZ_{n-1,t-s}^{\lambda}}{\tZ_{n-1,t}^{\lambda}}\cdot \frac{\tilde
Q^{\lambda}_{n-1,t-s}(\sigma_0<0)}{\tilde Q^{\lambda}_{n-1,t}(\sigma_0<0)}\\
&= \exp\bigl(\tilde Y_{n-1}(t-s,t)-\lambda s\bigr)\cdot \frac{\tilde
Q^{\lambda}_{n-1,t-s}(\sigma_0<0)}{\tilde Q^{\lambda}_{n-1,t}(\sigma_0<0)}\\
&\le  \exp\bigl(\tilde Y_{n-1}(t-s,t)-\lambda s\bigr)\cdot \frac{1}{\tilde
Q^{\lambda}_{n-1,t}(\sigma_0<0)}. 
\end{align*}
where we used (\ref{Y-increment}) for the reversed system.
Specializing the above to our context and substituting it in the probability that is to
be bounded:  
\begin{align} \nn
&\P\left(\frac{Z^\para_{n,t}( 0< \sigma_0\le u)}{Z_{(1,n),(0,t)}}\ge \frac14
e^{n^{\frac13} \theta^{-\frac23}\bbb }\right) =\P\left(\int_0^u \exp(-B(s)+\para s)
\frac{Z_{(1,n),(s,t)}}{Z_{(1,n),(0,t)}}\, ds
\ge \frac14 e^{n^{\frac13} \theta^{-\frac23} \bbb}  \right)\\[2pt]
&\qquad\qquad  \nn \le
\P\left(\int_0^u \frac{ \exp(-B(s)+\tilde Y_{n-1}(t-s,t)+(\para-\lambda) s)  }{\tilde
Q^{\lambda}_{n-1,t}(\sigma_0<0)}\,ds\ge\frac14 e^{n^{\frac13} \theta^{-\frac23}
\bbb}\right)\\
&\qquad\qquad 
\le\P\left(\tilde Q^{\lambda}_{n-1,t}(\sigma_0<0)\le 1/2\right)\label{eq:lower3.9}
 \\
&\qquad\qquad 
\qquad+ \ \P\left(\int_0^u \exp(-B(s)+\tilde Y_{n-1}(t-s,t)+\nu s) \, ds\ge
\frac1{8} 
e^{n^{\frac13} \theta^{-\frac23} \bbb}\right).\label{eq:lower4}
\end{align}
To treat probability (\ref{eq:lower3.9}) set 
\begin{align*}
\bar u=  (n-1)\trigamf(\lambda)-n \trigamf(\para)&\ge -\trigamf(\para) +(n-1)\Psi_2(\para)(\lambda-\para)  \\
&\ge   -C \theta^{-1} + \tfrac12 C\eps \sqrt{b} n^{\frac23}\theta^{-\frac43}   
\ge C' \sqrt{b}  n^{\frac23} \theta^{-\frac43},
\end{align*}
where  we used our assumptions on $\theta$, the bounds \eqref{Psi-3}, 
and took $b$ large enough in relation to $\theta_0$. 
Use  
invariance   (\ref{altQ2}) of $Q$  and   upper bound (\ref{aa-1}):
\be\begin{aligned}
\P(\tilde Q^{\lambda}_{n-1,t}(\sigma_0<0)<1/2)&=\P(\tilde
Q^{\lambda}_{n-1,t}(\sigma_0>0)\ge1/2)\\
&=
\P(\tilde Q^{\lambda}_{n-1,t+\bar u}(\sigma_0>\bar u)\ge1/2) \le C(\theta_0) b^{-3/2}.
 \end{aligned}\label{auxlemma8}\ee
To justify our use of the upper bound, note that  
\[   {(n-1)\Psi_1(\lambda)-t-\bar u\,} = {n\Psi_1(\theta)-t\,} =0 \]
 so the upper bound \eqref{aa-1} is valid for $\bar u$.

For probability (\ref{eq:lower4}),  observe first that 
  $s\mapsto \tilde Y_{n-1}(t-s,t)$  is 
  a standard Brownian motion which is independent of $B$ by construction. By introducing 
$
B^{\dagger}(s) = \tfrac1{\sqrt{2}} \bigl(-B(s)+\tilde
Y_{n-1}(t-s,t)\bigr)$ we need to bound
\[
 \P(\int_0^u \exp(\sqrt{2} B^{\dagger}((s)+\nu s)ds \ge \frac18 e^{\eps^{-1} \nu u} )\le  \P(\int_0^u \exp(\sqrt{2} B^{\dagger}((s)+\nu s)ds \ge  e^{3 \nu u} ).
\]
where the upper bound follows by choosing $\eps$ small enough (for fixed $b_0, n_0, \theta_0$). 
We will show that 
\begin{align} \P(\int_0^u \exp(\sqrt{2} B^{\dagger}((s)+\nu s)ds \ge e^{3\nu u})\le C e^{- \tfrac14 \nu^2 u} \label{intbnd}
\end{align}
 if  $\nu>0$, $u>0$.  Note that
 \begin{align*}
  &\P(\int_0^u \exp(\sqrt{2} B^{\dagger}((s)+\nu s)ds \ge e^{3\nu u})\le  \P(\int_{-\infty}^u \exp(\sqrt{2} B^{\dagger}((s)+\nu s)ds \ge e^{3\nu u})\\
  &\qquad \qquad \qquad 
  \le \P(\exp(\sqrt{2}B^{\dagger}((u)+\nu u)\ge e^{2\nu u})\\
  &\qquad  + \P(\int_{-\infty}^u \exp(\sqrt{2} (B^{\dagger}((s)-B^{\dagger}((u))+\nu (s-u))ds \ge e^{\nu u}).
 \end{align*}
 The first probability is $\P(\sqrt{2}B^{\dagger}((u)\ge \nu u)\le C \exp(-\nu^2 u /4)$. For the second probability we note that by Dufresne's identity \cite{dufr-osaka01} the integral has the same distribution as the reciprocal of a $\textup{Gamma}(\nu)$ random variable. Thus the second term is
 \begin{align*}
 \P(\textup{Gamma}(\nu)\le e^{-\nu u})\le \frac{1}{\nu \Gamma(\nu)} e^{-\nu^2 u} \le C e^{-\nu^2 u}
 \end{align*} 
 which proves the estimate (\ref{intbnd}).

 Collecting everything we get that 
 \[
 P\left(\frac{Z^\para_{n,t}( 0< \sigma_0\le u)}{Z_{(1,n),(0,t)}}\ge \frac14
e^{n^{\frac13} \theta^{-\frac23}\bbb }\right) \le C b^{-3/2}+C e^{-\tfrac14 \eps^2 b^{3/2}}\le C' b^{-3/2}.
 \]

The case of $-u<\sigma<0$ goes similarly, with small alterations.
Now $\lambda=\para+\nu\le 3\theta/2$.  Utilizing 
\eqref{ZZtil} and comparison \eqref{comp2}
the ratio is  developed as follows:
\begin{align*}
\frac{Z^\para_{n,t}( -u\le  \sigma_0<0)}{Z_{(1,n),(0,t)}} 
&=
 \int_{-u}^0 \exp(-B(s)+\para s) \frac{Z_{(1,n),(s,t)}}{Z_{(1,n),(0,t)}}\, ds\\
 &  \le  \int_{-u}^0 \frac{ \exp(-B(s)-\tilde Y_{n-1}(t,t-s)-(\para-\lambda) s)  }{\tilde
Q^{\lambda}_{n-1,t}(\sigma_0>0)}\,ds.
\end{align*}
The rest follows along the same lines as above. This proves (\ref{LBlemma3-1}).  
\end{proof}

\begin{lemma}\label{LBLemma3.5}
Fix $\theta_0>0$, suppose that $\theta_0<\theta<\theta^{-1} \sqrt{n}$ and let $t=n \Psi_1(\theta)$. 
Then there exist constants $c_0, C$ depending on $\theta_0$ so that 
\begin{align}
P\left(\log Z_{(1,n),(0,t)}- (\theta t-n \Psi_0(\theta))<-x\right)\le c_1 e^{-c_1^{-1} x  \theta }, \qquad \textup{for $x\ge c_0 n$. } 
\end{align}
The same bound holds for the upper tail. 
\end{lemma}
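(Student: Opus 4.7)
I bound the upper and lower tails separately.

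For the upper tail, the simple inequality \eqref{bound}, i.e.\ $Z^\theta_{n,t}\ge e^{r_1(0)}Z_{(1,n),(0,t)}$, converts a deviation of $\log Z_{(1,n),(0,t)}$ above its centering $f_n:=\theta t-n\Psi_0(\theta)=\E\log Z^\theta_{n,t}$ to a deviation of $\log Z^\theta_{n,t}$ or of $-r_1(0)$:
\[
\bP\bigl(\log Z_{(1,n),(0,t)}-f_n>x\bigr)\le \bP\bigl(\log Z^\theta_{n,t}-f_n>x/2\bigr)+\bP(r_1(0)<-x/2).
\]
The second probability equals $\bP(\textup{Gamma}(\theta)>e^{x/2})$, decaying faster than any exponential. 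For the first, I would use the identity $\log Z^\theta_{n,t}-f_n=\sum_{k=1}^n(r_k(t)+\Psi_0(\theta))-B(t)$ from \eqref{Zr-ident} and \eqref{ev-ident}, and split between the two independent pieces. The centered i.i.d.\ sum is handled by exponential Markov with MGF $e^{s\Psi_0(\theta)}\Gamma(\theta-s)/\Gamma(\theta)$; its Cram\'er rate function has a saddle at $s^\ast=\theta-u^\ast$ with $\Psi_0(\theta)-\Psi_0(u^\ast)=x/(2n)$, and for $x\ge c_0 n$ one checks (using $\Psi_0(u)\sim -1/u$ near $0$) that $u^\ast$ is small and the rate evaluates to $\sim\theta x/2$. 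The Gaussian tail for $B(t)$ is controlled by $t=n\Psi_1(\theta)\le C(\theta_0)\,n/\theta$, using $\Psi_1(\theta)\theta\le C(\theta_0)$ for $\theta\ge\theta_0$.

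For the lower tail no analogous comparison is available, so I would lower-bound $Z_{(1,n),(0,t)}$ directly by restricting the defining integral to paths near the diagonal. Put $t_k=kt/n$, $\delta=t/(4n)$, and $R:=\prod_{k=1}^{n-1}[t_k-\delta,t_k+\delta]\subset\{0<s_1<\cdots<s_{n-1}<t\}$. For $(s_1,\dots,s_{n-1})\in R$ each increment $B_k(s_k)-B_k(s_{k-1})$ exceeds $B_k(t_k)-B_k(t_{k-1})$ minus two oscillation terms $\sup_{|\epsilon|\le\delta}|B_k(t_j+\epsilon)-B_k(t_j)|$ for $j=k-1,k$. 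Hence
\[
Z_{(1,n),(0,t)}\ge(2\delta)^{n-1}\exp(G_0-M),
\]
where $G_0:=\sum_k[B_k(t_k)-B_k(t_{k-1})]\sim N(0,t)$ and $M=\sum_k M_k$ with $M_k$ a sum of two Brownian-oscillation maxima on length-$2\delta$ intervals. Independence of the $B_k$'s gives independent $M_k$'s; each has mean and sub-Gaussian scale $O(\sqrt\delta)$, so $\E M=O(\sqrt{nt})$ and $M-\E M$ is sub-Gaussian with variance proxy $O(t)$.

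A direct computation using the polygamma asymptotics in Section \ref{gtb} (splitting $\theta\in[\theta_0,1]$ and $\theta\ge 1$) shows
\[
\bigl|(n-1)\log(t/(2n))-\E M-f_n\bigr|\le C(\theta_0)\,n.
\]
Taking $c_0=2C(\theta_0)$, for $x\ge c_0 n$ the bound reduces to
\[
\bP\bigl(\log Z_{(1,n),(0,t)}<f_n-x\bigr)\le \bP\bigl(G_0-(M-\E M)<-x/2\bigr)\le C\exp(-cx^2/t)\le C\exp(-c'x\theta),
\]
the last step using $t\le C(\theta_0)\,n/\theta$ and $x/n\ge c_0$.

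The main obstacle is keeping constants uniform in $\theta$ across the full range $[\theta_0,\theta_0^{-1}\sqrt n]$: one must verify both that the Cram\'er saddle-point rate for $\sum r_k$ produces exactly the exponent $\theta x$ when $x\asymp n$, and that the volume-plus-mean-$M$ correction to the lower-bound centering is $O(n)$ with constant depending only on $\theta_0$. Both reduce to a case split at $\theta=1$ and the standard asymptotics $\Psi_0(\theta)\sim\log\theta$, $\Psi_1(\theta)\sim\theta^{-1}$; the remaining ingredient (sub-Gaussian tail for $M$) is an elementary union bound across the $n$ independent $M_k$'s.
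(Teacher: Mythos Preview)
Your proof is correct and takes a genuinely different route from the paper's. The paper exploits the random-matrix identity
\[
\max_{0<s_1<\cdots<s_{n-1}<t}\bigl(B_1(0,s_1)+\cdots+B_n(s_{n-1},t)\bigr)\ \stackrel{d}{=}\ \sqrt{nt}\,\lambda^n_{\max}
\]
(Baryshnikov; Gravner--Tracy--Widom), bounding $Z_{(1,n),(0,t)}$ above and below by $\tfrac{t^{n-1}}{(n-1)!}\exp(\pm\sqrt{nt}\,\lambda^n_{\max})$ and then invoking a GUE large-deviation tail $\bP(\lambda^n_{\max}>K)\le Ce^{-cnK^2}$. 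The centering verification $\bigl|-\theta t+n\Psi_0(\theta)+(n-1)\log t-\log(n-1)!\bigr|\le Cn$ is the same as yours. Your approach is more self-contained: for the lower tail you replace the exact GUE minimum by a cruder near-diagonal restriction with sub-Gaussian oscillation control, and for the upper tail you use the stationary comparison \eqref{bound} together with a Cram\'er bound on the i.i.d.\ log-Gamma sum from \eqref{Zr-ident}, both tools already developed in the paper. The paper's route is shorter but imports random-matrix input; yours stays entirely within elementary Brownian and large-deviation estimates.

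One small correction to your upper-tail sketch: the claim that the Cram\'er saddle $u^\ast$ is small relies on $x/(2n)>\Psi_0(\theta)$, which can fail when $\theta$ is near $\theta_0^{-1}\sqrt n$ (then $\Psi_0(\theta)\sim\tfrac12\log n$ while $x/(2n)$ may be only $c_0/2$). This is harmless: choose instead a suboptimal $s=\theta-u$ with $u=\theta_0/2$ fixed. Stirling gives $\theta\Psi_0(\theta)-\log\Gamma(\theta)=\theta+O(\log\theta)$, so the Cram\'er exponent is at least $\tfrac{\theta x}{8}-C(\theta_0)n\theta\ge c\theta x$ once $c_0$ is chosen large enough, which is exactly what you need.
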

\begin{proof}

We first note that
\begin{align*}
Z_{(1,n)(0,t)}&= \int_{0<s_1<\dots<s_{n-1}<t} \exp\left\{ B_1(0,s_1)+\dots+B(s_{n-1},1)\right\} ds_{1,n-1}\\
&>\frac{t^{n-1}}{(n-1)!}  \exp\left( \min_{0<s_1<\dots<s_{n-1}<t} ( B_1(0,s_1)+\dots+B(s_{n-1},1))    \right)\\
&\eqd \frac{t^{n-1}}{(n-1)!} \exp(-\sqrt{n t} \, \lambda^n_{\textup{max}})
\end{align*}
where $\lambda^n_{\textup{max}}$ is the largest eigenvalue of an $n\times n$ $GUE$ random matrix where the non-diagonal entries have variance $1/n$. (This is the normalization where the support of the spectrum converges to $[-2,2]$.) The fact that 
\begin{align*}
 \min_{0<s_1<\dots<s_{n-1}<1} ( B_1(0,s_1)+\dots+B(s_{n-1},1))& =- \max_{0<s_1<\dots<s_{n-1}<1} ( B_1(0,s_1)+\dots+B(s_{n-1},1)) \\&
 \eqd -\sqrt{n } \, \lambda^n_{\textup{max}}
\end{align*}
was proved independently in \cite{GTW} and \cite{Bar}.

It is known that the random variable $\evmax$ converges to 2, and the following uniform tail bound holds in $n$ for $K>K_0>2$ (see e.g. \cite{Led} and the references within): 
\begin{align}
P(\evmax>K)\le C e^{-C^{-1} n K^{2}}\label{GUEtail} 
\end{align}
with a constant $C$ depending only on $K_0$. We will use this bound with $K_0=3$.

We have
\be \begin{aligned} 
&P\left(\log Z_{(1,n),(0,t)}- (\theta t-n \Psi_0(\theta))<-x\right) \\
&\qquad \le P\left(e^{- (\theta t-n \Psi_0(\theta))} \frac{t^{n-1}}{(n-1)!} \exp(-\sqrt{n t} \, \lambda^n_{\textup{max}})<e^{-x}\right)\\
& \qquad = P\left(-\theta t+n \Psi_0(\theta)+
x+(n-1) \log t-\log(n-1)!<\sqrt{n t} \evmax
\right).  \end{aligned}
\label{yyy}\ee
Using Stirling's formula, the bounds $\theta_0\le \theta\le \theta_0^{-1} n^{1/2}$ and the bounds (\ref{Psi-3}), (\ref{didam1}) on $\Psi_0, \Psi_1$ we get that 
\begin{align*}
\left|-\theta t+n \Psi_0(\theta)+(n-1) \log t-\log(n-1)!\right| \le C n
\end{align*}
where  $C$ depends on $\theta_0$. This gives
\begin{align*}
P\left(\log Z_{(1,n),(0,t)}- (\theta t-n \Psi_0(\theta))<-x\right)&\le P\left(-C n+x<\sqrt{n t} \evmax
\right)
\end{align*}
Choosing $c_0>2C$ we get
\begin{align*}
P\left(-C n+x<\sqrt{n t} \evmax
\right)
&\le P\left(\tfrac12x<\sqrt{n t} \evmax
\right)
\le P(\tfrac12 x n^{-1} \Psi_1(\theta)^{-1/2}<\evmax).
\end{align*}
Now choose $c_0$ big enough so that  $\tfrac12 c_0 \Psi_1(\theta_0)^{-1/2}>3$ (this is possible since $\theta_0\le \theta$) so that we can use (\ref{GUEtail}):
\begin{align*}
P\left(\log Z_{(1,n),(0,t)}- (\theta t-n \Psi_0(\theta))<-x\right)&\le P(\frac12 x n^{-1} \Psi_1(\theta)^{-1/2}<\evmax)\\
&\le C e^{-C^{-1} n (\frac12 x n^{-1} \Psi_1(\theta)^{-1/2})^{2}}
\\
&\le c_1 e^{-c_1^{-1} \theta x}
\end{align*}
where in the last step we used the bounds on $\Psi_1(\theta)$ and $x\ge c_0 n$. 
\end{proof}

\section{Proofs for the KPZ equation}\label{pf-KPZ}
\begin{proof}[Proof of Theorem \ref{kpzclose}]
We first start with the case $\vf=0$. From (\ref{var-ident}) one can verify that
\begin{eqnarray*}
 \E \log \she_N(\tau) \leq A \tau,
\end{eqnarray*}
for some constant $A>0$. This, together with the upper bound in Theorem \ref{thm-unscaled} yields
\begin{eqnarray*}
 \Vvv \log \she_N(\tau) \leq F(\tau),
\end{eqnarray*}
for some finite $F(\tau)$. This implies that $\{\log \she_N(\tau):\, N\geq 1\}$ is uniformly
integrable, and hence $\E \log \she_N(\tau) \to \E \log \she(\tau,0)$. Note that this could have
been obtained by exact computations for the stochastic heat equation as well. Together with Theorem \ref{thm:monster}, this gives the
convergence in law 
$$\log \she_N(\tau)-\E\log \she_N(\tau) \Rightarrow \log \she(\tau)-\E \log \she(\tau).$$
Fatou's lemma and Theorem \ref{thm-unscaled} then give
\begin{eqnarray*}
\Vvv \log \she(\tau) &\leq& \liminf_N \E \left[\left(\log \she_N(\tau)-\E\log
\she_N(\tau)\right)^2\right]\\
&\leq& C \tau^{\frac23},
\end{eqnarray*}
for some $C>0$. As for the lower bound, 
\begin{eqnarray*}
\P\left\{ \log \she(\tau) -  \E \log \she(\tau) \geq c \tau^{1/3}\right\}
&=& \lim_n \P\left\{ \log \she_n(\tau) - \E \log \she_n(\tau) \geq c \tau^{1/3}
\right\}\geq \delta,
\end{eqnarray*}
for some constants $c,\, \delta>0$, and $\tau$ large enough, thanks to the lower bound in
Proposition \ref{prop-lower-bd}.
We have proved that there exist some constant $C>0$ such that
\begin{eqnarray*}
\frac{1}{C} \tau^{\frac23} \leq \Vvv \log \she(\tau) &\leq&C \tau^{\frac23},
\end{eqnarray*}
for $\tau$ large enough.

We now turn to the case $|\vf|\leq K $ for some $0<K<+\infty$. From (\ref{shevf3}), we can verify
that
\begin{eqnarray*}
 e^{-K} \she^{\vf}_N(\tau) \leq \she_N(\tau)\leq e^{K} \she^{\vf}_N(\tau).
\end{eqnarray*}
This implies that
\begin{eqnarray*}
\Vvv \log \she^{\vf}_N(\tau) \leq 8K^2 + 2 \Vvv \log \she_N(\tau), 
\end{eqnarray*}
which in turns implies the uniform integrability of $\{\log \she^{\vf}_N(\tau):\, N\geq 1\}$.
Fatou's lemma and the upper bound on $\Vvv \log \she_N(\tau)$ show that
\begin{eqnarray*}
\Vvv \log \she^{\vf}(\tau) \leq C' \tau^{\frac23}, 
\end{eqnarray*}
for some $C'>0$ and $\tau>0$ large enough. The lower bound follows from Proposition
\ref{prop-lower-bd}
\begin{eqnarray*}
\P\left\{ \log \she^{\vf}(\tau) -  \E \log \she^{\vf}(\tau) \geq c \tau^{1/3}\right\}
&=& \lim_n \P\left\{ \log \she_N^{\vf}(\tau) - \E \log \she_N^{\vf}(\tau) \geq c \tau^{1/3}
\right\}\\
&\geq& \lim_n \P\left\{ \log \she_N(\tau) - \E \log \she_N(\tau) \geq c \tau^{1/3}-2K
\right\}\\\\
&\geq& \delta,
\end{eqnarray*}
for suitable $c,\, \delta>0$ and all $N$ and $\tau$ large enough. This completes  the proof of the theorem.
\end{proof}


\section{Facts about the gamma function and Gamma distribution}\label{gtb} We collect here some basic facts about the gamma function and the Gamma distribution.

%

Recall that $\Psi_0=\Gamma'/\Gamma$ and $\Psi_n=\Psi_{n-1}'$ for $n\ge 1$. The lemma below follows from straightforward computations. 
\begin{lemma}
Let $A\sim$ Gamma$(\mu,r)$. Then we have
\begin{align*}
  \E A=\frac{\mu}{r},\quad  \Vvv(A) = \frac{\mu}{r^2}, \quad
   \E(\log A) =  \Psi_0(\mu)-\log r,\quad
 \Vvv(\log A) = \Psi_1(\mu).
\end{align*}
 \end{lemma}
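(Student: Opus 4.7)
The plan is to compute all four quantities directly from the density $f(x) = r^\mu \Gamma(\mu)^{-1} x^{\mu-1} e^{-rx}$ on $(0,\infty)$. First I would reduce to the case $r=1$ via the scaling $A \stackrel{d}{=} X/r$ where $X \sim \textup{Gamma}(\mu,1)$. This immediately gives $\E A = r^{-1}\E X$, $\Vvv(A)= r^{-2}\Vvv(X)$, $\E(\log A) = \E(\log X) - \log r$, and $\Vvv(\log A)=\Vvv(\log X)$, so it suffices to verify the four identities when $r=1$.

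For the first two identities, I would compute the power moments
\[
\E X^k = \frac{1}{\Gamma(\mu)}\int_0^\infty x^{\mu+k-1}e^{-x}\,dx = \frac{\Gamma(\mu+k)}{\Gamma(\mu)},
\]
and then apply the functional equation $\Gamma(z+1)=z\Gamma(z)$ twice to get $\E X = \mu$ and $\E X^2 = \mu(\mu+1)$, from which $\Vvv(X)=\mu$ is immediate.

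For the log-moments I would introduce the Mellin transform, i.e.\ the moment generating function of $\log X$,
\[
M(s) := \E e^{s\log X} = \E X^s = \frac{\Gamma(\mu+s)}{\Gamma(\mu)},
\]
which is real-analytic in $s$ on a neighborhood of $0$ since $\Gamma$ is. The cumulant generating function is $\log M(s) = \log\Gamma(\mu+s)-\log\Gamma(\mu)$, and differentiating at $s=0$ gives $\E(\log X) = (\log\Gamma)'(\mu) = \Psi_0(\mu)$ and $\Vvv(\log X) = (\log\Gamma)''(\mu) = \Psi_0'(\mu) = \Psi_1(\mu)$ straight from the definitions $\Psi_0 = \Gamma'/\Gamma$ and $\Psi_1 = \Psi_0'$.

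There is essentially no obstacle here; the only point that needs a sentence of care is the interchange of differentiation and integration used to equate the derivatives of $M(s)$ at $s=0$ with $\E \log X$ and $\E (\log X)^2$. This is justified by dominated convergence using the uniform bound $\int_0^\infty x^{\mu-1}(1+|\log x|)^k e^{-x}\,dx < \infty$ on a small real neighborhood of $s=0$, which controls the relevant difference quotients.
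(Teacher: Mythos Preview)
Your proposal is correct and is exactly the kind of ``straightforward computation'' the paper has in mind; the paper does not actually write out a proof beyond that phrase. Your reduction to $r=1$ by scaling, the use of $\Gamma(\mu+k)/\Gamma(\mu)$ for power moments, and differentiation of $s\mapsto \log\Gamma(\mu+s)-\log\Gamma(\mu)$ to extract $\Psi_0(\mu)$ and $\Psi_1(\mu)$ are the standard route, and your dominated-convergence remark is the only point that merits a word of justification.
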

The polygamma functions satisfy 
\be \Psi_n(x)=(-1)^{n+1}n! \sum_{k\ge 0}(x+k)^{-n-1},  \qquad\text{for $n\ge 1$.}
\label{Psi-2}\ee
From this  it follows that for $n\ge 1$ we have 
\be \frac{(n-1)!}{x^n} \le  \abs{\Psi_n(x)} \le \frac{(n-1)!}{x^n}+\frac{n!}{x^{n+1}}
\label{Psi-3}\ee
For $\Psi_0$ we have the following asymptotics for large $x>0$:  
\be  \Psi_0(x) = \log x - \frac{1}{2x} + O\left(\frac{1}{x^2}\right).  \label{didam1}\ee

%

\end{document}